\definecolor{beige}{rgb}{0.96, 0.96, 0.86}
\definecolor{airforceblue}{rgb}{0.36, 0.54, 0.66}
\definecolor{antiquefuchsia}{rgb}{0.57, 0.36, 0.51}
\definecolor{awesome}{rgb}{1.0, 0.13, 0.32}
\newcommand{\CNA}[2]{CNA([0,T]\times C([0,T],#1),{#2})}
\newcommand{\CNAW}[1]{CNA([0,T]\times \mathbb{W},{#1})}
\newcommand*{\PT}{{\mathcal{P}_T}}
\newcommand*{\rad}{M([0,T])}
\newcommand*{\Bb}[1]{B_b([0,T],#1)}
\newcommand*{\Bbz}[1]{B_{b,0}([0,T],#1)}
\newcommand*{\Cb}[1]{C([0,T],#1)}
\newcommand*{\GatB}[2]{\mathcal{G}^2_{\sigma^s}(\Cb{#1},#2)}
\newcommand*{\GatWB}[1]{\mathcal{G}^2_{\sigma^s}(\mathbb{W},#1)}
\newcommand*{\Gatot}[3]{\mathcal{G}^{#3}(#1,#2)}
\newcommand*{\medcap}{\mathbin{\scalebox{1.5}{\ensuremath{\cap}}}}
\numberwithin{equation}{section}
\theoremstyle{plain}
\newtheorem{theorem}{Theorem}[section]
\newtheorem{corollary}[theorem]{Corollary}
\newtheorem{proposition}[theorem]{Proposition}
\newtheorem{lemma}[theorem]{Lemma}
\newtheorem{definition}[theorem]{Definition}
\newtheorem{assumption}[theorem]{Assumption}
\theoremstyle{definition}
\newtheorem{remark}[theorem]{Remark}
\newtheorem{example}[theorem]{Example}
\renewcommand{\epsilon}{\varepsilon}
\renewcommand{\phi}{\varphi}
\def\qed{{\hfill\hbox{\enspace${ \blacksquare}$}} \smallskip}
\newcommand{\e}{e}
  \def\Swiech
\def\SWIECH
\def\b*{\begin{eqnarray*}}
\def\e*{\end{eqnarray*}}
\def \0{\mathbf{0}}
\theoremstyle{plain}
\theoremstyle{definition}
\newcommand*{\hatc}{{\hat t,\hat Y}}
\newcommand*{\Bone}[1]{\mathbb{B}^1(#1)}
\newcommand*{\Boninf}[1]{\mathbb{B}^1_\infty(#1)}
\newcommand*{\Bonl}[1]{\mathbb{B}^1_\lambda(#1)}
\newcommand*{\Bones}[1]{\mathbb{B}^1_{\sigma{^s}}(#1)}
\newcommand*{\Sone}[1]{\mathbb{V}(#1)}
\newcommand*{\Soninf}[1]{\mathbb{V}_\infty(#1)}
\newcommand*{\Sones}[1]{\mathbb{V}_{\sigma{^s}}(#1)}
\newcommand{\NA}[2]{NA([0,T]\times C([0,T],#1),{#2})}
\newcommand*{\Gatdue}[2]{\mathcal{G}^2(\Cb{#1},#2)}
\theoremstyle{plain}
\theoremstyle{definition}
 \theoremstyle{definition}
\def\<{\left\langle }
\def\>{\right\rangle }
\title{Functional It\=o calculus in
Hilbert spaces and
  application to  path-dependent 
 Kolmogorov equations}
\date{}
\author{Mauro Rosestolato\thanks{CMAP, \'Ecole Polytechnique, Paris, France,
e-mail: \texttt{mauro.rosestolato@polytechnique.edu}. 
This research has been
partially supported by
the ERC
321111 Rofirm.
The author is sincerely grateful to Salvatore Federico for valuable discussions.
}
}
\begin{document}

\maketitle

\begin{abstract} 
 Recently, in \cite{Cont2010,Cont2013,Dupire2009}, functional It\=o calculus has been introduced and developed in finite dimension for functionals of continuous semimartingales.
With different techniques, we develop a functional It\=o calculus for functionals of Hilbert space-valued diffusions.
In this context, we first prove a path-dependent It\=o's formula, then we show applications to classical solutions of path-dependent Kolmogorov equations in Hilbert spaces and derive a Clark-Ocone type formula.
Finally, we explicitly verify that all the theory developed can be applied to a class of diffusions driven by SDEs with a 
 path-dependent drift 
(suitably regular) and constant diffusion coefficient.
\end{abstract}

\vspace{10pt}
\noindent\textbf{Keywords:}
functional It\=o calculus,
It\=o's formula,
path-dependent Kolmogorov equation,
path-dependent stochastic differential equations,
Clark-Ocone formula.

\vspace{10pt} 
\noindent\textbf{AMS 2010 subject classification:}
60H30, 
34K50, 
35K10, 
35R10, 
35R15.

\section{Introduction}

The present paper extends to infinite dimensional spaces
the so called functional It\=o calculus, so far developed
in finite-dimensional spaces, and some of its applications.

In \cite{Dupire2009} the first ideas for a functional It\=o calculus were presented for one-dimensional continuous semimartingales, by introducing suitable notions of time/space derivatives which reveal to be adequate for dealing with non-anticipative functionals.
In that paper, a functional It\=o's formula is provided and then employed to represent solutions of a backward Kolmogorov equation
with path-dependent terminal value.
This allows to obtain an explicit representation of the stochastic integrand in the martingale representation theorem, when the martingale is closed by a functional of the process solving the SDE associated to the Kolmogorov equation.
In \cite{Cont2010a,Cont2010,Cont2013} these ideas are furtherly developed  and generalized.
In \cite{Cont2010a} the functional It\=o's formula is proved for a large class of finite-dimensional c\`adl\`ag processes, including semimartingales and Dirichlet processes, and for functionals which can depend on the quadratic variation.
In \cite{Cont2013} 
the notion of vertical derivative is extended to 
square integrable continuous martingales and it is proved that it coincides with the stochastic integrand in the martingale representation theorem.

Functional It\=o calculus in finite dimension can be also viewed as an application to the spaces of continuous/c\`adl\`ag functions  of  stochastic calculus in Banach spaces
(\cite{DiGirolami2014a,DiGirolami2011,DiGirolami2012,DiGirolami2014,Flandoli2013}).
In \cite{DiGirolami2014} the notion of $\chi$-quadratic variation is introduced for Banach space-valued processes (not necessarily semimartingales) and the related It\=o's formula is discussed.
This general framework finds application to ``window'' processes in $C([-T,0],\mathbb{R}^n)$, whose values, at each time $t\in[0,T]$, is essentially the path up to time $t$ of an $\mathbb{R}^n$-valued continuous process.
When applied to window processes, such
 It\=o's formula allows to derive a Clark-Ocone type representation formula by recurring to solutions of a path-dependent Kolmogorov equation.
In \cite{Flandoli2013} finite dimensional It\=o processes $X$ with constant diffusion coefficient and path-dependent drift are considered.
By embedding the dynamics of $X$ into a Banach space of functions $[-T,0]\rightarrow \mathbb{R}^n$, it is proved that the Feynman-Kac formula provides a solution to the path-dependent backward Kolmogorov equation associated to $X$, with a non-path-dependent terminal value.

Another approach to path-dependent functionals and path dependent stochastic systems  is represented by the embedding in infinite dimensional Hilbert spaces. Indeed, when the dependence on the history is sufficiently regular --- precisely regular with respect to a $L^2$ norm --- a representation in the Hilbert space of the form $\mathbb{R}\times L^2$ is possible. This approach goes back to
\cite{Chojnowska-Michalik1978}
 and was further developed in other papers (\cite{Federico2011,Federico2015,Gozzi2004}). With this approach, the very well-developed theory  of stochastic calculus in Hilbert space (\cite{DaPrato2014}) can be applied. On the other hand, it leaves out some important classes of problems, in particular all those where the dependence on the history involves  pointwise evaluations at past times.

Up to our knowledge, so far the functional It\=o calculus has been developed only in finite dimensional spaces.
We generalize it to infinite dimension as follows.
Consider two real separable Hilbert spaces $U,H$ and
a $U$-valued cylindrical Wiener process $W$.
Given $T>0$, denote by $\mathbb{W}$ the space $\Cb{H}$ of continuous functions $[0,T]\rightarrow H$.
Given $t\in [0,T]$ and $\mathbf{x}\in \mathbb{W}$, consider the process
$$
X^{t,\mathbf{x}}_s=\mathbf{x}_{t\wedge \cdot}+\int_t^{t\vee s}b_rdr
+\int_t^{t\vee s}\Phi_rdW_r\qquad s\in [0,T],
$$
where 
$$
\mathbf{x}_{t\wedge \cdot}(s)\coloneqq
\begin{dcases}
  \mathbf{x}(s)&s\in[0,t]\\
  \mathbf{x}(t)&s\in(t,T],
\end{dcases}
$$
$b$ is a square-integrable $H$-valued process, and $\Phi$ is a square-integrable process  valued in the space of Hilbert-Schmidt operators $L_2(U,H)$.
We develop a functional It\=o calculus for processes of the form
$$
u(\cdot,X^{t,\mathbf{x}})\coloneqq   \left\{ u(s,X^{t,\mathbf{x}}_s) \right\} _{s\in[0,T]}
$$
where $u\colon [0,T]\times \mathbb{W}\rightarrow \mathbb{R}$ is a \emph{non-anticipative} functional, meaning that $u(s,\mathbf{y})=u(s,\mathbf{y}')$ whenever $\mathbf{y}=\mathbf{y}'$ on $[0,s]$ for a given $s\in[0,T]$.
Under suitable regularity assumptions on $u$, we prove an It\=o formula for $u(\cdot,X^{t,\mathbf{x}})$.
Then, assuming that $X^{t,\mathbf{x}}$ is driven by an SDE
of the form
\begin{equation}\label{2017-05-02:00}
  \begin{dcases}
    dX_s=b(s,X)ds + \Phi(s,X)dW_s &\quad \forall s\in [t,T]\\
    X_{ t\wedge \cdot}= \mathbf{x}_{  t\wedge \cdot},&
  \end{dcases}
\end{equation}
where $b\colon[0,T]\times \mathbb{W}\rightarrow H$,
$\Phi\colon [0,T]\times \mathbb{W}\rightarrow L_2(U,H)$ are 
non-anticipative
coefficients
satisfying usual Lipschitz conditions, and letting
 $f\colon \mathbb{W}\rightarrow \mathbb{R}$ be 
a function, we show that, if the non-anticipative function $\varphi$ defined by
$$
\varphi(t,\mathbf{x})\coloneqq \mathbb{E} \left[ f(X^{t,\mathbf{x}}) \right] \qquad (t,\mathbf{x})\in [0,T]\times \mathbb{W}
$$
is suitably regular, then $\varphi$
solves the path-dependent backward Kolmogorov equation associated to \eqref{2017-05-02:00} with terminal value $f$ at time $T$.
As a corollary, we obtain a Clark-Ocone type formula for the process $\varphi(\cdot,X^{t,\mathbf{x}})$.
Finally,
 we 
accomplish a complete study of the regularity of the solution $X^{t,\mathbf{x}}$ to SDE~\eqref{2017-05-02:00} with respect to $t,\mathbf{x}$, when $\Phi$ is constant and $b$ contains a convolution of the path of $X$ with a Radon measure.
In particular,
 the case of pointwise delay in the coefficient $b$ 
 will be covered.
For the latter class of dynamics, by a pathwise analysis, we show in detail
that the assumptions required 
by the general results previously obtained (It\=o's formula, representation of solution to the  path-dependent Kolmogorov equation, Clark-Ocone type formula) are satisfied, hence the theory can be applied.

Our methods deviate from the ones used in the aforementioned liteature.
In \cite{Cont2010a,Cont2010,Cont2013,Dupire2009} non-anticipative functionals are considered
on the metric space $\Lambda$ of couples ``(time $t$,c\`adl\`ag path on $[0,t]$)''.
Due to the lack of a linear structure for $\Lambda$,
this choice leads to introduce non-standard notions of derivatives (vertical/horizontal) and to deal with ad-hoc continuity assumptions.
On the contrary, we do not use the space $\Lambda$ and, in a more standard perspective, 
we look at the set of continuous non-anticipative functionals as a subvector space of the space of continuous functions on $[0,T]\times \mathbb{W}$.
Our choice is equivalent to take the restriction of $\Lambda$ to couples with continuous path in the second component as working space, 
but shows the advantage
to allow to deal with classical G\^ateaux derivatives in space.
The choice of G\^ateaux derivatives in space reveals to be particularly adequate when proving regularity of solutions to path-dependent SDEs
with respect to the intial value by using contraction methods in
 Banach spaces, as in 
Section~\ref{2016-02-12:11}:
if one wishes to apply
the theoretical results in practice, this is a key step in order to show that the assumptions of the theory are satisfied.
Nevertheless, also in our setting, the introduction of an ad-hoc time derivative for non-anticipative functionals cannot be avoided.
It is remarkable that it is convenient for us to use a left-sided time derivative, instead of the right-sided derivative introduced in \cite{Dupire2009} and then adopted also in \cite{Cont2010a,Cont2010,Cont2013}.
Our choice turns out to be very natural when studying the link between the path-dependent SDE and the associated Kolmogorov equation.
Moreover, unlike \cite{DiGirolami2014a,DiGirolami2011,DiGirolami2012,DiGirolami2014,Flandoli2013},
we do not rephrase our path-dependent problem in a  Banach space.
This allows to avoid to work with stochastic calculus in Banach spaces. 

\bigskip
The paper is organized as follows.
 In Section~\ref{2017-05-11:06}, after introducing some notation, we define the locally convex space with respect to which the regularity of  non-anticipative functionals will be considered.
In Section~\ref{2016-02-03:02} we prove the path-dependent It\=o's formula (Theorem~\ref{2017-05-30:02}).
In Section~\ref{2017-05-11:07}
we show that the Feynman-Kac formula for the strong solution of a path-dependent SDE in Hilbert spaces,
if it is enough regular,
 provides a solution to the associated Kolmogorov equation (Theorem~\ref{2016-02-10:10}).
We then use this fact to derive a Clark-Ocone type formula (Corollary~\ref{2017-06-08:02}).
Finally, in Section~\ref{2016-02-12:11}, we explicitly show that the previously developed theory can be applied to 
a class of SDEs with path-dependent drift and constant diffusion coefficient (Theorem~\ref{2016-02-13:02}).

\section{Preliminaries}
\label{2017-05-11:06}

\subsection{Notation}
\label{not:2017-04-25:22}

Let $T>0$,
let
  $(\Omega,\mathbb{F}\coloneqq \{\mathcal{F}_t\}_{t\in[0,T]},\mathcal{F},\mathbb{P})$ be a complete filtered probability space, 
and let $(E,|\cdot|_E)$ be a Banach space.
Unless otherwise specified, every Banach space $E$ is considered endowed with its Borel $\sigma$-algebra $\mathcal{B}_E$.
$\Bb{E}$ denotes the space of bounded Borel measurable functions $\mathbf{x}\colon[0,T]\rightarrow E$.
If $\mathbf{x}\in \Bb{E}$, 
 then $\mathbf{x}_t$ and $\mathbf{x}(t)$ denote the evaluation at time $t\in[0,T]$ of the function $\mathbf{x}$, whereas $\mathbf{x}_{t\wedge \cdot}$ denotes the function defined by  $(\mathbf{x}_{t\wedge \cdot})_s\coloneqq \mathbf{x}_{t\wedge s}$ for $s\in [0,T]$.
  We denote by $\Bbz{E}$ the subspace of $\Bb{E}$ of bounded Borel functions $\mathbf{x}\colon [0,T]\rightarrow E$ with separable range.
Unless otherwise specified, $\Bbz{E}$ is considered with the topology of the uniform convergence.
Then $\Bbz{E}$ is a Banach space
and  $\Cb{E}\subset \Bbz{E}$.
 $\mathcal{L}^0_{\mathcal{P}_T}(C([0,T],E))$ denotes the space of 
$E$-valued $\mathbb{F}$-adapted continuous processes.
Notice that this implies the measurability of the continuous process
$$
\Omega\rightarrow C([0,T],E),\ \omega \mapsto X(\omega)
$$
for all $\omega\in \Omega$ (\footnote{$E$ is not assumed to be separable.}), hence the measurability of
$$
  (\Omega_T,\mathcal{P}_T)\rightarrow C([0,T],E),\ (\omega,t)  \mapsto  X_{t\wedge \cdot}(\omega).
$$
If $X,X'\in \mathcal{L}^0_{\mathcal{P}_T}(C([0,T],E))$, we write $X=X'$ if and only if \mbox{$\mathbb{P}(|X-X'|_\infty=0)=1$}.
For $p\in [1,\infty)$,
 $\mathcal{L}^p_{\mathcal{P}_T}(C([0,T],E))$ denotes the space of  
functions
$X\in \mathcal{L}^0_{\mathcal{P}_T}(C([0,T],E))$
such that
$\Omega\rightarrow C([0,T],E),\ \omega  \mapsto  X
(\omega)$ has separable range and
\begin{equation*}
  |X|_{\mathcal{L}^p_{\mathcal{P}_T}(C([0,T],E))}\coloneqq  \left( \mathbb{E} \left[ |X|_\infty^p \right]  \right) ^{1/p}<\infty.
\end{equation*}

By $\rad$ we denote 
the space of Radon measures on the interval $[0,T]$.
For $\nu\in \rad$, $|\nu|_1$ denotes the total variation of $\nu$.

 Let $F$ be another Banach space.
$\mathcal{G}^n(E,F)$ denotes the space of continuous functions $f\colon E\rightarrow F$
which are G\^ateaux differentiable up to order $n$ and such that, for $j=1,\ldots,n$,
$$
 E^{i+1}\rightarrow F,\ (x,y_1,\ldots,y_i) \mapsto  \partial ^i_{y_1\ldots y_i}f(x)
$$
is continuous.
If $f\in[0,T]\times E\rightarrow F$ is such that $f(t,\cdot)\in \Gatot{E}{F}{n}$ for all $t\in[0,T]$, then we denote by $ \partial^j _Ef$, $j=1,\ldots,n$, the G\^ateaux differentials of $f$ with respect $E$.
Similarly, if $f(t,\cdot)\in C^n(E,F)$, i.e.\ $f(t,\cdot)$ is
continuously
 Fr\'echet differentiable 
up to order $n$,
we denote by $D^j_Ef$, $j=1,\ldots,n$, the Fr\'echet differentials of $f$ with respect to $E$.

  $\NA{E}{F}$\label{not:2017-04-25:12}  
denotes
the subspace of $F^{[0,T]\times \Cb{E}}$ whose members are non-anticipative functions, that is
   \begin{multline*}
   \NA{E}{F}\coloneqq
\left\{f\in F^{[0,T]\times \Cb{E}}        \colon\right.\\
\left.\phantom{F^{[0,T]}} f(t,\mathbf{x})=f(t,\mathbf{x}_{t\wedge \cdot})\        \forall (t,\mathbf{x})\in [0,T]\times \Cb{E} \right\}.
   \end{multline*}
By
 $\CNA{E}{F}$\label{not:2017-04-25:14} we denote the subspace of $C([0,T]\times \Cb{E},F)$
whose members are non-anticipative functions, 
that is
$$
\hskip-20pt\CNA{E}{F}\coloneqq 
C([0,T]\times \Cb{E},F)\medcap \NA{E}{F}.
$$

 $(H,|\cdot|_H)$ and $(U,|\cdot|_U)$ denote two real separable Hilbert spaces, with scalar product denoted by $\langle \cdot,\cdot\rangle_H$ and $\langle \cdot,\cdot\rangle_U$,
respectively.
Let  $ \mathfrak e\coloneqq \{e_n\}_{n\in \mathcal{N}}$
be an orthonormal basis of $H$,
 where $\mathcal{N}=\{1,\ldots,N\}$
if $H$ has dimension $N\in \mathbb{N}\setminus\{0\}$, or $\mathcal{N}=\mathbb{N}$ if $H$ has infinite dimension.
Similarly, $\mathfrak e'\coloneqq \{e'_m\}_{m\in \mathcal{M}}$
denotes an orthonormal basis of $U$,
 where $\mathcal{M}=\{1,\ldots,M\}$ if $U$ has dimension $M\in \mathbb{N}\setminus \{0\}$, or $\mathcal{M}=\mathbb{N}$ if $U$ has infinite dimension.
We use the short notation $\mathbb{W}$  for the space $\Cb{H}$ of continuous functions $[0,T]\rightarrow H$.

\subsection{The space $\Bones{E}$}\label{2016-01-25:02}

In this section we introduce a topology
 with respect to which we will
often
consider the  regularity of the differentials of
 path-dependent functions in the
remaining of the manuscript.


\smallskip
Let $E$ denote a Banach space.
We begin by introducing on $\Bbz{E}$ the family of seminorms $\mathbf{p}^s\coloneqq \{p^s_\nu\}_{\nu\in \rad}$
defined by
\begin{equation*}
    p^s_\nu(\mathbf{x})\coloneqq \left|\int_{[0,T]} \mathbf{x}(s)\nu(ds)\right|_E\qquad \forall \mathbf{x}\in \Bbz{E},\ \forall \nu\in \rad.
  \end{equation*}
Since we are
considering only bounded Borel functions $\mathbf{x}$ with separable range, the integral $\int_{[0,T]} \mathbf{x}d\mu$ is well defined.

We denote by $\sigma^s$  the locally convex vector topology induced on $\Bbz{E}$  by $\mathbf{p}^s$. 
If
 $\tau_\infty$ denotes the topology of the uniform convergence $\Bbz{E}$, it is easily seen that
\begin{equation}
  \label{eq:2016-01-25:03}
 \sigma^s\subsetneq \tau_\infty.
\end{equation}
The inclusion $\sigma^s\subset \tau_\infty$ is immediate,
whereas
the strict inclusion
is due to the fact that $\sigma^s$ is contained in the weak topology
 of $(\Bbz{E},|\cdot|_\infty)$,
 and, unless $E$ is trivial, the weak topology
 is strictly weaker than the topology induced by the norm, because  $\Bbz{E}$ is infinite dimensional.
The same holds for the restrictions to $\Cb{E}$, i.e.\
$
\sigma^s_{|\Cb{E}}\subsetneq \tau_{\infty|\Cb{E}}
$.

\begin{proposition}
  \label{2015-08-24:00}
Convergent and Cauchy sequences in $\sigma^s$
 are characterized as follows.
\begin{enumerate}[(i)]
\item \label{2016-01-25:08}   A sequence $\{\mathbf{x}_n\}_{n\in \mathbb{N}}$ converges to $\mathbf{x}$ in  $(\Bbz{E},\sigma^s)$ if and only if
  \begin{equation}
    \label{eq:2016-01-25:05}
    \begin{dcases}
      (a)\ \ \sup_{n\in \mathbb{N}}|\mathbf{x}_n|_\infty<\infty\\
      (b)\ \ \lim_{n\rightarrow \infty}\mathbf{x}_n(s)=\mathbf{x}(s)\quad\forall s\in[0,T].
    \end{dcases}
  \end{equation}
\item 
\label{2016-01-25:07}
A sequence $\{\mathbf{x}_n\}_{n\in\mathbb{N}}$ is Cauchy in $(\Bbz{E},\sigma^s)$ if and only if
\eqref{eq:2016-01-25:05}$(a)$
holds and the sequence $\{ \mathbf{x}_n(s)\}_{n\in \mathbb{N}}$ is Cauchy for every $s\in[0,T]$.
\end{enumerate}
\end{proposition}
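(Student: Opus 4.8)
The plan is to read off both characterizations from the uniform boundedness principle applied on the Banach space $(\rad,|\cdot|_1)$. To this end I would associate to each $\mathbf{x}\in\Bbz{E}$ the map $T_{\mathbf{x}}\colon\rad\to E$ defined by $T_{\mathbf{x}}(\nu)\coloneqq\int_{[0,T]}\mathbf{x}(s)\,\nu(ds)$, which is well defined as a Bochner integral precisely because $\mathbf{x}$ is bounded with separable range (hence strongly measurable) and $|\nu|$ is finite, as already noted after the definition of $p^s_\nu$. The estimate $|T_{\mathbf{x}}(\nu)|_E\le|\mathbf{x}|_\infty|\nu|_1$ makes $T_{\mathbf{x}}$ linear and bounded, and testing against Dirac masses gives $|T_{\mathbf{x}}(\delta_s)|_E=|\mathbf{x}(s)|_E$ with $|\delta_s|_1=1$; hence $\|T_{\mathbf{x}}\|=|\mathbf{x}|_\infty$. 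This identity is the device that will turn operator-norm bounds into uniform-norm bounds. Observe also that $p^s_{\delta_s}(\mathbf{y})=|\mathbf{y}(s)|_E$, so testing the seminorms against Diracs simply reads off pointwise values.

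For the ``if'' part of (i), assume $(a)$ and $(b)$ and set $C\coloneqq\sup_n|\mathbf{x}_n|_\infty$. Fixing $\nu\in\rad$, the integrands $|\mathbf{x}_n(s)-\mathbf{x}(s)|_E$ are bounded by $2C$ and tend to $0$ pointwise by $(b)$, so the scalar dominated convergence theorem against the finite measure $|\nu|$ yields $p^s_\nu(\mathbf{x}_n-\mathbf{x})\le\int_{[0,T]}|\mathbf{x}_n-\mathbf{x}|_E\,d|\nu|\to0$; as $\nu$ is arbitrary, $\mathbf{x}_n\to\mathbf{x}$ in $\sigma^s$. For the ``only if'' part, $(b)$ is immediate on taking $\nu=\delta_s$, while convergence forces $\sup_n p^s_\nu(\mathbf{x}_n)<\infty$ for each $\nu$ by the triangle inequality, that is $\sup_n|T_{\mathbf{x}_n}(\nu)|_E<\infty$ pointwise on $\rad$; Banach--Steinhaus then gives $\sup_n\|T_{\mathbf{x}_n}\|=\sup_n|\mathbf{x}_n|_\infty<\infty$, which is $(a)$.

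Part (ii) follows the same pattern. For the ``only if'' direction, $\{\mathbf{x}_n(s)\}_n$ is Cauchy by taking $\nu=\delta_s$, and for each $\nu$ the reals $p^s_\nu(\mathbf{x}_n)$ form a Cauchy (by the reverse triangle inequality for the seminorm), hence bounded, sequence, so again $\sup_n|T_{\mathbf{x}_n}(\nu)|_E<\infty$ and Banach--Steinhaus yields $(a)$. For the ``if'' direction, completeness of $E$ lets me define $\mathbf{x}(s)\coloneqq\lim_n\mathbf{x}_n(s)$; this $\mathbf{x}$ is bounded by $C$, Borel as a pointwise limit of Borel maps, and has separable range since $\mathbf{x}([0,T])\subset\overline{\bigcup_n\mathbf{x}_n([0,T])}$, hence $\mathbf{x}\in\Bbz{E}$ and $(a)$--$(b)$ hold. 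Applying part (i) gives $\mathbf{x}_n\to\mathbf{x}$ in $\sigma^s$, and a convergent sequence is Cauchy.

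The only genuine obstacle is the uniform bound $(a)$; the remaining steps (dominated convergence, testing with Diracs, completeness) are routine. The crux is to identify the right Banach-space domain for Banach--Steinhaus, namely $(\rad,|\cdot|_1)$ rather than a space of functions, and to exploit the identity $\|T_{\mathbf{x}}\|=|\mathbf{x}|_\infty$ so that boundedness of $\{T_{\mathbf{x}_n}(\nu)\}_n$ for each individual $\nu$ upgrades to a uniform bound on the sup-norms $|\mathbf{x}_n|_\infty$.
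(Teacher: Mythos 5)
Your proof is correct and follows essentially the same route as the paper's: pointwise information from Dirac masses, the Banach--Steinhaus theorem applied to the operators $\nu\mapsto\int_{[0,T]}\mathbf{x}_n\,d\nu$ on $(\rad,|\cdot|_1)$ together with the identity between operator norm and sup norm to get the uniform bound, and dominated convergence for the converse. The only addition is your explicit treatment of part (ii) (which the paper dismisses as ``similar''), where your reduction to part (i) by constructing the pointwise limit via completeness of $E$ is a clean and valid way to close that direction.
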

\begin{proof}
We prove  only \emph{(\ref{2016-01-25:08})}. The proof of 
\emph{(\ref{2016-01-25:07})} is similar.
  Suppose that $\{\mathbf{x}_n\}_{n\in \mathbb{N}}$ converges to $\mathbf{x}$ in $(\Bbz{E},\sigma^s)$. For $s\in[0,T]$,
if
$\delta_s$
is the Dirac measure in $s$, we have
$$
\lim_{n\rightarrow \infty} |\mathbf{x}_n(s)-\mathbf{x}(s)|_H=
\lim_{n\rightarrow \infty}p_{\delta_s}(\mathbf{x}_n-\mathbf{x})=0\qquad,
$$
which shows
\eqref{eq:2016-01-25:05}\emph{(b)}.

To show
\eqref{eq:2016-01-25:05}\emph{(a)}, consider the family of continuous linear operators
$$
\Phi_{n}\colon \rad\rightarrow E,\ \nu\mapsto \int_{[0,T]}  \mathbf{x}_n(s)\nu(ds),
$$
for $n\in \mathbb{N}$. Since $\{\mathbf{x}_n\}_{n\in \mathbb{N}}$ is convergent,
the orbit $\{\Phi_{n}(\nu)\}_{n\in\mathbb{N}}$ is bounded, for
all $\nu\in \rad$, 
 then, by Banach-Steinhaus theorem, we have
$$
\sup_{n\in\mathbb{N}}
\left|\mathbf{x}_n\right|_\infty=\sup_{n\in\mathbb{N}}\sup_{\substack{\nu\in \rad\\|\nu|_1\leq 1}}\left|\int_{[0,T]} \mathbf{x}_n(s)\nu(ds)\right|_E=\sup_{n\in \mathbb{N}}|\Phi_{n}|_{L(\rad,E)}<\infty,
$$
where $|\Phi_n|_{L(\rad,E)}$ denotes the operator norm of $\Phi_n$.
This
shows
\eqref{eq:2016-01-25:05}\emph{(a)} and concludes the proof for one direction of the claim.

Conversely, if \eqref{eq:2016-01-25:05} holds, then $p_{\nu}(\mathbf{x}_n-\mathbf{x})\rightarrow 0$ by Lebesgue's  dominated convergence theorem, for all $\nu\in \rad$,
 hence $\mathbf{x}_n\rightarrow \mathbf{x}$ in $\sigma^s$.
\end{proof}

By \eqref{eq:2016-01-25:03}, 
it follows that
 bounded sets in $\tau_\infty$
are bounded in $\sigma^s$.
By using Banach-Steinhaus theorem similarly as done in the proof of 
Proposition \ref{2015-08-24:00}, one can see that bounded sets in $\sigma^s$ are bounded in $\tau_\infty$.
Then the bounded sets in $\sigma^s$ and $\tau_\infty$ are the same.

\begin{definition}\label{2016-01-27:04}
  We define $\Bone{E}$ as the vector space of all functions $\mathbf{x}\colon [0,T]\rightarrow E$ which are  pointwise limit of a uniformly bounded sequence $\{\mathbf{x}_n\}_{n\in \mathbb{N}}\subset \Cb{E}$, i.e.\ 
$$
\Bone{E}\coloneqq  \left\{ \mathbf{x}\in E^{[0,T]}\colon
 \exists\ \{\mathbf{x}_n\}_{n\in \mathbb{N}}\subset \Cb{E}\ \text{s.t.}\ 
 \begin{dcases}
\lim_{n\rightarrow\infty}\mathbf{x}_n(s)=\mathbf{x}(s)&\forall s\in[0,T]\\
\sup_{n\in \mathbb{N}}|\mathbf{x}_n|_\infty<\infty&
\end{dcases}
\qquad\right\} .
$$
We denote  by $\Bones{E}$ the  space $\Bone{E}$ endowed with the 
locally convex
topology induced by $\sigma^s$.
Then a net $\{\mathbf{x}_\iota\}_{\iota\in \mathcal{I}}$ converges to $0$ in $\Bones{E}$ if and only if $\lim_\iota p_{\nu}(\mathbf{x}_\iota)= 0$ for all $\nu\in \rad$.
\end{definition}

\medskip
 \begin{remark}\label{2017-06-10:01}
   By
Proposition \ref{2015-08-24:00}\emph{(\ref{2016-01-25:08})}, it follows that $\Bone{E}$ is the sequential closure
of $\Cb{E}$ in $(\Bbz{E},\sigma^s)$.
In particular, for any
\mbox{$T_2$-space $\mathcal{T}$} and any
 function $\Cb{E}\rightarrow \mathcal{T}$, there exists at most one sequentially continuous extension 
$(\Bone{E},\sigma^s)\rightarrow \mathcal{T}$.
 \end{remark}

\begin{remark}\label{2016-01-27:05}
  In Definition \ref{2016-01-27:04}, 
by
multiplying $\mathbf{x}_n$ by $|\mathbf{x}|_\infty/|\mathbf{x}_n|_\infty$
if necessary,
we can assume without loss of generality that $\sup_{n\in \mathbb{N}}|\mathbf{x}_n|_\infty\leq |\mathbf{x}|_\infty$.
By 
Proposition~\ref{2015-08-24:00}\emph{(\ref{2016-01-25:08})},
we then see that the unit ball of $(\Cb{E},|\cdot|_\infty)$ is $\sigma^s$-sequentially dense in the unit ball of 
$(\Bone{E},|\cdot|_\infty)$.
\end{remark}

Since we have the inclusion $\Bone{\mathbb{R}}\subsetneq  \Bb{\mathbb{R}}$
(see \cite[Theorem 11.4]{Rooij1982}), through the identification
$B_b([0,T],\mathbb{R})=B_b([0,T],\mathbb{R}e)$
 in $\Bbz{E}$, for some $e\in E$, $|e|_E=1$ ($E\neq \{0\}$), we also have the strict inclusion $\Bone{E}\subsetneq \Bbz{E}$.

\smallskip

The space
  $\Bone{E}$ is closed in  $\Bbz{E}$ (hence in $\Bb{E}$)
  with respect to the uniform norm.
The proof of \cite[Theorem 11.7]{Rooij1982}, that is made for the case $E=\mathbb{R}$
and for a space of Borel functions  larger than our $\Bone{\mathbb{R}}$,
can be adapted to cover our case.
Since the completeness
of $\Bone{E}$
 is essential to us, we prove it.

\begin{proposition}
  $(\Bone{E},|\cdot|_\infty)$ is a Banach space.
\end{proposition}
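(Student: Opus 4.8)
The plan is to prove completeness by showing that $\Bone{E}$ is a closed subspace of the Banach space $(\Bbz{E},|\cdot|_\infty)$. First I would record the elementary facts that $\Bone{E}$ is genuinely a subspace of $\Bbz{E}$: any $\mathbf{x}\in\Bone{E}$ is bounded (dominated by the uniform bound of its approximating sequence), Borel measurable (a pointwise limit of continuous, hence Borel, maps), and has separable range (its values lie in the closure of the countable union of the ranges of the $\mathbf{x}_n\in\Cb{E}$, each of which is compact and hence separable). Since $\Bbz{E}$ is complete, it then suffices to prove that $\Bone{E}$ is closed in $\Bbz{E}$ for $|\cdot|_\infty$, i.e.\ that a uniform limit $\mathbf{y}$ of a sequence $\{\mathbf{y}_k\}\subset\Bone{E}$ again belongs to $\Bone{E}$.

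The heart of the argument is a telescoping diagonalization. Passing to a subsequence I would arrange $|\mathbf{y}-\mathbf{y}_k|_\infty\le 2^{-k-1}$, so that $|\mathbf{y}_{k+1}-\mathbf{y}_k|_\infty\le 2^{-k}$. Each difference $\mathbf{y}_{k+1}-\mathbf{y}_k$ lies in the vector space $\Bone{E}$, and by Remark~\ref{2016-01-27:05} I may choose continuous approximants $\{\mathbf{z}^{(k)}_n\}_n\subset\Cb{E}$ converging pointwise to it with $\sup_n|\mathbf{z}^{(k)}_n|_\infty\le 2^{-k}$; similarly I pick $\{\mathbf{x}_n\}_n\subset\Cb{E}$ with $\mathbf{x}_n\to\mathbf{y}_1$ pointwise and uniformly bounded. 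Setting $\mathbf{w}_n := \mathbf{x}_n+\sum_{k=1}^{n-1}\mathbf{z}^{(k)}_n$, each $\mathbf{w}_n$ is a finite sum of continuous maps, hence $\mathbf{w}_n\in\Cb{E}$, and the geometric bounds give $\sup_n|\mathbf{w}_n|_\infty\le |\mathbf{y}_1|_\infty+1$.

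It then remains to check that $\mathbf{w}_n\to\mathbf{y}$ pointwise. Fixing $s\in[0,T]$ and $\epsilon>0$, I would choose $K$ with $\sum_{k>K}2^{-k}<\epsilon$: for $n>K$ the tail $\sum_{k=K+1}^{n-1}\mathbf{z}^{(k)}_n(s)$ has norm $<\epsilon$, while the finitely many head terms converge, as $n\to\infty$, to $\mathbf{y}_1(s)+\sum_{k=1}^K(\mathbf{y}_{k+1}-\mathbf{y}_k)(s)=\mathbf{y}_{K+1}(s)$, which is within $\epsilon$ of $\mathbf{y}(s)$. Hence $\limsup_n|\mathbf{w}_n(s)-\mathbf{y}(s)|\le 2\epsilon$, and since $\epsilon$ is arbitrary, $\mathbf{w}_n(s)\to\mathbf{y}(s)$. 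By Definition~\ref{2016-01-27:04} this exhibits $\mathbf{y}\in\Bone{E}$, establishing closedness and therefore completeness.

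The main obstacle is precisely this last verification. Because the pointwise convergence defining membership in $\Bone{E}$ is in general not uniform, one cannot recover $\mathbf{y}$ by extracting a single approximant of some $\mathbf{y}_k$; a naive diagonal sequence need neither converge nor stay bounded. What rescues the construction is the quantitative control $\sup_n|\mathbf{z}^{(k)}_n|_\infty\le 2^{-k}$ afforded by Remark~\ref{2016-01-27:05}, which is exactly what allows the growing sums defining $\mathbf{w}_n$ to remain uniformly bounded and to converge pointwise despite the absence of uniform approximation at each level.
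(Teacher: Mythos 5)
Your proof is correct and is essentially the paper's own argument: after passing to a geometrically convergent subsequence and telescoping, your series $\mathbf{y}_1+\sum_k(\mathbf{y}_{k+1}-\mathbf{y}_k)$ is exactly the absolutely convergent series the paper starts from, and your diagonal sums $\mathbf{w}_n=\mathbf{x}_n+\sum_{k=1}^{n-1}\mathbf{z}^{(k)}_n$, with the rescaling from Remark~\ref{2016-01-27:05} and the head/tail estimate for pointwise convergence, mirror the paper's $\mathbf{z}_k=\sum_{n=1}^{k}\mathbf{y}^{(k)}_n$ step for step. The only cosmetic difference is that you phrase completeness as closedness of $\Bone{E}$ in the Banach space $\Bbz{E}$, while the paper invokes the absolutely-convergent-series criterion directly.
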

\begin{proof}
We show that every absolutely convergent sum is convergent in $\Bone{E}$.
To this end, let $\{\mathbf{x}_n\}_{n\in \mathbb{N}}\subset \Bone{E}$  be sequence such that $\sum_{n\in \mathbb{N}}|\mathbf{x}_n|_\infty<\infty$.
By completeness of $\Bb{E}$, $\sum_{n\in \mathbb{N}}\mathbf{x}_n$ is convergent in $\Bb{E}$, say to $\mathbf{z}$.
We are done if we show that $\mathbf{z}\in \Bone{E}$.
  By definition of $\Bone{E}$, for each $n\in \mathbb{N}$, there exists a sequence $\{\mathbf{y}_n^{(k)}\}_{k\in \mathbb{N}}\subset \Cb{E}$ 
such that
$$
M_n\coloneqq\sup_{k\in \mathbb{N}}|\mathbf{y}^{(k)}_n|_\infty<\infty\qquad\mbox{and}\qquad \lim_{k\rightarrow \infty}\mathbf{y}^{(k)}_n(s)=\mathbf{x}_n(s) \ \forall s\in[0,T].
$$
By multiplying $\mathbf{y}^{(k)}_n$ by $|\mathbf{x}_n|_\infty/|\mathbf{y}^{(k)}_n|_\infty$ if necessary,
without loss of generality we can assume that $M_n\leq |\mathbf{x}_n|_\infty$.
Define
$
\mathbf{z}_k\coloneqq \sum_{n=1}^k \mathbf{y}_{n}^{(k)}$,  $k\in \mathbb{N}$.
Then $\mathbf{z}_k\in C([0,T],E)$
and
\begin{equation}
  \label{2017-05-23:06}
  \sup_{k\in \mathbb{N}}|\mathbf{z}_k|_\infty\leq
\sup_{k\in \mathbb{N}}\sum_{n=1}^\infty
|\mathbf{y}_n^{(k)}|_\infty
\leq
\sum_{n=1}^\infty
|\mathbf{x}_n|_\infty<\infty.
\end{equation}
Moreover, for $s\in[0,T]$, $0\leq \bar k\leq k$,
\begin{equation*}
  \begin{split}
    |\mathbf{z}(s)-\mathbf{z}_k(s)|_E&
    =
    |\sum_{n=1}^\infty\mathbf{x}_n(s)-\sum_{n=1}^k\mathbf{y}_n^{(k)}(s)|_E
    \leq
    \sum_{n=\bar k}^\infty
(|\mathbf{x}_n|_\infty+|\mathbf{y}_n^{(k)}|_\infty)+
    \sum_{n=1}^{\bar k}|\mathbf{x}_n(s)-\mathbf{y}_n^{(k)}(s)|_E\\
    &\leq
   2 \sum_{n=\bar k}^\infty |\mathbf{x}_n|_\infty+
    \sum_{n=1}^{\bar k}|\mathbf{x}_n(s)-\mathbf{y}_n^{(k)}(s)|_E.
  \end{split}
\end{equation*}
By taking first the ${\displaystyle\limsup_{k\rightarrow \infty}}$, 
recalling the pointwise convergence
 $\mathbf{y}^{(k)}_n(s)\rightarrow \mathbf{x}_n(s)$ as $k\rightarrow \infty$,
 and then taking the ${\displaystyle\lim_{\bar k\rightarrow \infty}}$, we obtain
$
\mathbf{z}_k(s)\rightarrow \mathbf{z}(s)
$ as $k\rightarrow \infty$.
Since $s\in[0,T]$ was arbitrary, 
this, together with \eqref{2017-05-23:06}, proves that $\mathbf{z}\in \Bone{E}$.
\end{proof}



\subsection{$\Sones{E}$-sequentially continuous derivatives}

  We introduce the following subspace of $\Bone{E}$:
\begin{equation}
  \label{eq:2016-02-10:00}
  \Sone{E}\coloneqq  \operatorname{Span} \left\{ \mathbf{x}+v\mathbf{1}_{[t,T]}\colon \mathbf{x}\in \Cb{E},\ v\in E,\ t\in[0,T]
 \right\}.
\end{equation}
A member of $\Sone{E}$ is the sum of a continuous function and  a right-continuous step function (with finite number of jumps).
We denote by $\Sones{E}$ the space $\Sone{E}$ endowed with the locally convex topology induced by $\Bones{E}$
and by $\Soninf{E}$ the space $\Sone{E}$ endowed with the topology induced by the supremum norm $|\cdot|_\infty$.

\begin{definition}\label{2016-04-22:07}
 We say that a function $f\in \Gatdue{E}{F}$ 
 has 
\emph{derivatives with $\Sones{E}$-sequentially continuous extensions} if
$$
 \partial f\colon \Cb{E}\times \Cb{E}\rightarrow F,\ (\mathbf{x},\mathbf{v}) \mapsto  \partial _\mathbf{v}f(\mathbf{x})
$$
and
$$
 \partial^2 f\colon \Cb{E}\times \Cb{E}\times \Cb{E}\rightarrow F,\ (\mathbf{x},\mathbf{v},\mathbf{w}) \mapsto  \partial^2_{\mathbf{v}\mathbf{w}}f(\mathbf{x})
$$
admit sequentially continuous extensions, respectively,
$$
 \overline{\partial f}\colon \Cb{E}\times \Sones{E}\rightarrow F,\ (\mathbf{x},\mathbf{v}) \mapsto  \overline{\partial f}(\mathbf{x}).\mathbf{v}
$$
and
$$
\overline{ \partial^2 f}\colon \Cb{E}\times \Sones{E}\times \Sones{E}\rightarrow F,\ (\mathbf{x},\mathbf{v},\mathbf{w}) \mapsto  \overline {\partial^2 f}(\mathbf{x}).
(\mathbf{v},\mathbf{w}).
$$
We denote by $\GatB{E}{F}$
\label{2016-02-11:21} the subspace of $\Gatdue{E}{F}$ containing the functions having derivatives with $\Sones{E}$-sequentially continuous extensions.
\end{definition}

  If $u\in \NA{E}{F}$, 
$t\in [0,T]$, and $u(t,\cdot)\in \GatB{E}{F}$, then 
the notation $ \overline {\partial _Eu}(t,\mathbf{x}).\mathbf{v}$,
for $\mathbf{x}\in \Cb{E}$ and
 $\mathbf{v}\in \Sone{E}$,
stands 
for $\overline{ \partial _Eu(t,\cdot)}(\mathbf{x}),\mathbf{v}$.
Similarly, $\overline{ \partial _Eu}(t,\cdot)$
stands for
$\overline{ \partial _Eu (t,\cdot)}$.

\begin{remark}\label{2017-06-10:02}
  If $u\in \NA{E}{F}$ is such that, for some
$t\in [0,T]$,  $u(t,\cdot)\in \Gatot{E}{F}{2}$, then, by non-anticipativity,
$$
 \partial _Eu(t,\mathbf{x}).\mathbf{v}
=
 \partial _Eu(t,\mathbf{x}).\mathbf{v}'\qquad
\forall
\mathbf{x},
\mathbf{v},\mathbf{v}'
\in \Cb{E}\ 
 \mbox{s.t.}\
\mathbf{v}(s)=
\mathbf{v}'(s)\ \mbox{for}\ s\in[0,t].
$$
If
$u(t,\cdot)\in \GatB{E}{F}$, then it also holds
$$
\overline{ \partial _Eu}(t,\mathbf{x}).\mathbf{v}
=
\overline{ \partial _Eu}(t,\mathbf{x}).\mathbf{v}'\qquad
\forall
\mathbf{x}\in \Cb{E},\ 
\forall \mathbf{v},\mathbf{v}'\in \Sone{E}\ \mbox{s.t.}\
\mathbf{v}(s)=
\mathbf{v}'(s)\ \mbox{for}\ s\in[0,t].
$$
In particular,
$$
\overline{ \partial _Eu}(t,\mathbf{x}).(\mathbf{1}_{[t,T]}v)
=
\overline{ \partial _Eu}(t,\mathbf{x}).
(\mathbf{1}_{[t,T')}v)\qquad
\forall
\mathbf{x}\in \Cb{E},\ 
\forall v\in E,\ \forall T'\in(t,T).
$$
A similar remark holds for the second-order differential.
Because of that, the directional derivatives
$ \overline{\partial _E}u(t,\mathbf{x}).(\mathbf{1}_{[t,T]}v),
 \overline{\partial^2 _Eu}(t,\mathbf{x}).(\mathbf{1}_{[t,T]}v,\mathbf{1}_{[t,T]}w)$, $\mathbf{x}\in \Cb{E}$, $v,w\in E$, express in our framework the so-called vertical derivatives  of 
\cite{Cont2010a,Cont2010,Cont2013}.
\end{remark}

\begin{example}\label{2016-02-10:09}
Let $\mu\in \rad$ and $g\in C([0,T]\times E,F)$ such that $g(t,\cdot)\in \mathcal{G}^2(E,F)$ for all $t\in[0,T]$, and
let us assume that  $ \partial_E g$ and $ \partial ^2_{E}g$ are bounded on bounded sets of $[0,T]\times E$. Define
$$
f(\mathbf{x})\coloneqq \int_{[0,T]} g(s,\mathbf{x}(s))\mu(ds)\qquad \forall \mathbf{x}\in  \Cb{E}.
$$
Then $f\in \mathcal{G}^2(\Cb{E},F)$, with
\begin{equation*}
  \begin{aligned}
   \partial f(\mathbf{x}).\mathbf{v}&=\int_{[0,T]} \partial_E g(s,\mathbf{x}(s)).\mathbf{v}(s)\mu(ds)\qquad\qquad\ \ \ \ 
&& \forall\mathbf{x},\mathbf{v}\in \Cb{H}\\
 \partial^2 f(\mathbf{x}).(\mathbf{v},\mathbf{w})&=\int_{[0,T]} \partial^2_{E} g(s,\mathbf{x}(s)).(\mathbf{v}(s),\mathbf{w}(s))\mu(ds)
\qquad  &&\forall \mathbf{x},\mathbf{v},\mathbf{w}\in \Cb{H}.
\end{aligned}
\end{equation*}
It is clear that $ \partial f(\mathbf{x}).\mathbf{v}$ and
$ \partial ^2 f(\mathbf{x}).(\mathbf{v},\mathbf{w})$ can be computed with the same expressions when $\mathbf{v},\mathbf{w}\in \Sone{E}$.
Moreover, by 
Proposition \ref{2015-08-24:00}\emph{(\ref{2016-01-25:08})}, by Lebesgue's dominated convergence theorem, and by strong continuity of the G\^ateaux differentials of $g$,
 we have that $ \partial f(\mathbf{x}).\mathbf{v}$ and $ \partial f(\mathbf{x}).(\mathbf{v},\mathbf{w})$ are sequentially continuous with respect to $(\mathbf{x},\mathbf{v})\in \Cb{E}\times \Sones{E}$
and
$(\mathbf{x},\mathbf{v},\mathbf{w})\in \Cb{E}\times \Sones{E}\times \Sones{E}$, respectively.
Then $f\in \GatB{E}{F}$.
\end{example}

\section{A path-dependent It\=o's  formula}
\label{2016-02-03:02}

In this section we prove
an It\=o's  formula for processes of the form $\{u(t,X)\}_{t\in[0,T]}$, 
where $X$ is a diffusion with values in $H$ and $u$ is a non-anticipative function with regular time-space derivatives, in a sense  specified 
below by 
Assumption~\ref{2017-05-30:11}.

\smallskip
For a  non-anticipative function $u$, we introduce 
 the following left-sided time derivative.

\begin{definition}
  For $u\in \NA{E}{F}$ and  $(t,\mathbf{x})\in (0,T)\times \Cb{E}$, we define
the following left-sided derivative,
if it exists:
      \begin{equation}
      \mathcal{D}^-_tu(t,\mathbf{x})\coloneqq \lim_{h\rightarrow
  0^+}\frac{u(t,\mathbf{x}_{(t-h)\wedge \cdot})-u(t-h,\mathbf{x})}{h}.\label{2016-02-10:08}
\end{equation}
  \end{definition}

\bigskip
  \begin{remark}\label{2017-06-01:01}
    Notice that,
by the very definition,
 for $t,t'\in(0,T)$, $t<t'$, and $\mathbf{x}\in \Cb{E}$,  the derivative
$\mathcal{D}^-_tu(t',\mathbf{x}_{t\wedge \cdot})$
concides 
 with the
left-sided derivative of the map
$$
(t,T)\rightarrow F,\ s \mapsto u(s,\mathbf{x}_{t\wedge \cdot})
$$
computed in $t'$.
  \end{remark}


We will prove the path-dependent It\=o's formula under the following assumption.

  \begin{assumption}\label{2017-05-30:11}
The function $u$ belongs to $CNA([0,T]\times\mathbb{W},\mathbb{R})$ and has the following properties.
\begin{enumerate}[(i)]
\item\label{2017-05-31:10} For all  $t\in(0,T)$,
 $\mathcal{D}^-_tu(t,\mathbf{x})$ exists for all $\mathbf{x}\in \mathbb{W}$.
For a.e.\ $t\in(0,T)$, the map
 \begin{equation*}
   \mathbb{W}\rightarrow \mathbb{R},\ \mathbf{x}  \mapsto \mathcal{D}^-_tu(t,\mathbf{x})
 \end{equation*}
is continuous.
For all compact set $K\subset \mathbb{W}$ there exists $M_K>0$ such that
\begin{equation}\label{2017-05-31:00}
  \sup_{\mathbf{x}\in K}|\mathcal{D}^-_tu(t,\mathbf{x})|\leq M_K\qquad \mbox{for a.e.\ } t\in(0,T).
\end{equation}
\item\label{2017-05-31:06} For all $t\in[0,T]$, $u(t,\cdot)\in \GatWB{\mathbb{R}}$ and the differentials $ \partial _\mathbb{W}u$ and $ \partial ^2_\mathbb{W}u$ are bounded:
  \begin{equation}
\sup_{t\in[0,T]}  \sup_{\substack{\mathbf{x},\mathbf{v}\in  \mathbb{W}\\
|\mathbf{v}|_\infty\leq 1}}\left| \partial_\mathbb{W} u(t,\mathbf{x}).\mathbf{v}\right|< \infty
\end{equation}
\begin{equation}
\sup_{t\in[0,T]}  \sup_{\substack{\mathbf{x},\mathbf{v},\mathbf{w}\in
\mathbb{W}\\
|\mathbf{w}|\vee|\mathbf{v}|_\infty\leq 1}}\left| \partial _\mathbb{W}^2u(t,\mathbf{x}).(\mathbf{v},\mathbf{w})
\right|< \infty.
\end{equation}
\item\label{2017-05-31:07} For a.e.\ $t\in(0,T)$,
    \begin{align}
      \lim_{h\rightarrow 0^+}{\overline{\partial_\mathbb{W} u}}(t+h, \mathbf{x}_{t\wedge \cdot}).(\mathbf{1}_{[t,T]}(\cdot)v)&
=
{\overline{\partial_\mathbb{W} u}}(t, \mathbf{x}_{t\wedge \cdot}).(\mathbf{1}_{[t,T]}(\cdot)v),\\[5pt]
         \lim_{h\rightarrow 0^+}{\overline{\partial^2_\mathbb{W} u}}(t+h, \mathbf{x}_{t\wedge \cdot}).
(\mathbf{1}_{[t,T]}(\cdot)v,\mathbf{1}_{[t,T]}(\cdot)v)&
=
{\overline{\partial^2_\mathbb{W} u}}(t, \mathbf{x}_{t\wedge \cdot}).(\mathbf{1}_{[t,T]}(\cdot)v,\mathbf{1}_{[t,T]}(\cdot)v),
    \end{align}
for all $ \mathbf{x}\in \mathbb{W}$ and all $v\in H$.
\end{enumerate}
  \end{assumption}

We give  some simple examples for 
 which 
Assumption~\ref{2017-05-30:11} is verified.

\begin{example}\label{2017-06-12:00}
  Let $\hat u\in C^{1,2}_b([0,T]\times H,\mathbb{R})$ and 
$u(t,\mathbf{x})\coloneqq \hat u(t,\mathbf{x}(t))$, $(t,\mathbf{x})\in[0,T]\times \mathbb{W}$.
Then 
Assumption~\ref{2017-05-30:11} is verified, with
$\mathcal{D}^-_tu(t,\mathbf{x})= \partial _t\hat u(t,\mathbf{x}(t))$,
for $t\in(0,T)$ and $\mathbf{x}\in \mathbb{W}$,
 $ \overline{\partial _\mathbb{W}u}(t,\mathbf{x}).\mathbf{v}= D_H\hat u(t,\mathbf{x}(t)).\mathbf{v}(t)$,
$ \overline{\partial^2 _\mathbb{W}u}(t,\mathbf{x}).
(\mathbf{v}.\mathbf{w})= D_H\hat u(t,\mathbf{x}(t)).(\mathbf{v}(t),\mathbf{w}(t))$,
for $t\in [0,T]$, $\mathbf{x}\in \mathbb{W}$, $\mathbf{v}
,\mathbf{w}\in \Sone{H}$.
\end{example}


\begin{example}
  Let $\gamma\in C^1([0,T],\mathbb{R})$, $h\in C^{0,2}_b([0,T]\times H,\mathbb{R})$.
For $(t,\mathbf{x})\in [0,T]\times \mathbb{W}$, define
$$
u(t,\mathbf{x})\coloneqq
\int_0^t
h(s,\mathbf{x}(s))\gamma(t-s)ds.
$$
A direct computation gives,
for $(t,\mathbf{x})\in[0,T]\times \mathbb{W}$,
\begin{gather*}
  \mathcal{D}^-_tu(t,\mathbf{x})=h(t,\mathbf{x}(t))\gamma(0)+\int_0^th(s,\mathbf{x}(s))\gamma'(t-s)ds\\
    \overline{ \partial_\mathbb{W} u}(t,\mathbf{x}).\mathbf{v}=
\int_0^t
D_Hh(s,\mathbf{x}(s)).\mathbf{v}(s)
\gamma(t-s)ds\\
    \overline{ \partial^2_\mathbb{W} u}(t,\mathbf{x}).(\mathbf{v},\mathbf{w})=
\int_0^t
D^2 _Hh(s,\mathbf{x}(s)).(\mathbf{v}(s),\mathbf{w}(s)
)
\gamma(t-s)ds
\end{gather*}
and one can easiliy see that
Assumption~\ref{2017-05-30:11} is verified by $u$.
\end{example}

\begin{example}
Let $u$
be a function verifying
Assumption~\ref{2017-05-30:11} and
let
$h\in C_b^{1,2}([0,T]\times \mathbb{R},\mathbb{R})$.
For $(t,\mathbf{x})\in [0,T]\times \mathbb{W}$, define
$
\hat u(t,\mathbf{x})\coloneqq
h(t,u(t,\mathbf{x}))
$.
We have
$$
\mathcal{D}^-_t \hat u(t,\mathbf{x})= \partial _th(t,u(t,\mathbf{x}))+D_Hu(t,u(t,\mathbf{x})).\mathcal{D}^-_tu(t,\mathbf{x})
$$
and 
 $ \overline{ \partial _\mathbb{W}\hat u}, \overline{ \partial^2 _\mathbb{W}\hat u}$ are given by the
 chain rule.
Assumption~\ref{2017-05-30:11} are verified.
\end{example}

\bigskip
Let $B\colon \Soninf{H}\times \Soninf{H}\rightarrow \mathbb{R}$ be a 
continuous
bilinear functional
and let
 $C>0$ such that $|B(\mathbf{x},\mathbf{y})|\leq C|\mathbf{x}|_\infty|\mathbf{y}|_\infty$, for all $\mathbf{x},\mathbf{y}\in \Soninf{H}$.
Let $\mathbf{a}\in \Sone{\mathbb{R}}$, $|\mathbf{a}|_\infty\leq 1$,
and $T\in L_2(U,H)$.
Then $\mathbf{a}Tu\in \Sone {H}$, for all $u\in U$,
and $\mathbf{a}v\in \Sone{H}$, for all $v\in H$.
Clearly
$$
U\times H\rightarrow \mathbb{R},\ (u,v)  \mapsto B(\mathbf{a}Tu,\mathbf{a}v)
$$
is bilinear and continuous.
Let $Q\in L(U,H)$ be the unique linear and continuous operator such that
\begin{equation}
  \label{2017-06-08:01}
  \langle Qu,v\rangle_H=B(\mathbf{a}Tu,\mathbf{a}v)\qquad \forall u\in U,\ \forall v\in H.
\end{equation}
We claim that $Q\in L_2(U,H)$.
Indeed, 
\begin{equation*}
    \sum_{m\in \mathcal{M}}|Qe'_m|_H^2
=
    \sum_{m\in \mathcal{M}}
\sup_{\substack{v\in H\\ |v|_H\leq 1}}
 \left( 
B(\mathbf{a}Te'_m,\mathbf{a}v)
 \right) ^2
\leq
    \sum_{m\in \mathcal{M}}
C^2
|\mathbf{a}Te'_m|_\infty^2
\leq
C^2|T|^2_{L_2(U,H)}<\infty.
\end{equation*}
Then $Q^*\in L_2(H,U)$ and, by \cite[Proposition C.4]{DaPrato2014}, $Q^*T\in L(U)$  is a nuclear operator.
In particular, the number
\begin{equation*}
  \sum_{m\in \mathcal{M}}
  B(\mathbf{a}Te'_m,\mathbf{a}Te'_m)
=
  \sum_{m\in \mathcal{M}}
  \langle Qe'_m,Te'_m\rangle_H
=
  \sum_{m\in \mathcal{M}}
  \langle e'_m,Q^*Te'_m\rangle_U
=
\operatorname{Tr}(Q^*T)
\end{equation*}
is well-defined, finite, and does not depend on the chosen orthonormal basis $\{e'_m\}_{m\in \mathcal{M}}$.
This observation leads to introduce the following well-defined notion.

\begin{definition}
Let $B\colon \Soninf{H}\times \Soninf{H}\rightarrow \mathbb{R}$ be a 
continuous
bilinear functional,
 $\mathbf{a}\in \Sone{\mathbb{R}}$,
 $T\in L_2(U,H)$.
We define
\begin{equation}
\mathbf{T}[B,\mathbf{a}T]\coloneqq  \sum_{m\in \mathcal{M}}B(\mathbf{a}Te'_m,\mathbf{a}Te'_m).
\end{equation}
\end{definition}

\bigskip
Let 
$
b\in \mathcal{L}_\PT^1(\mathbb{W})$, 
$\Phi\in \mathcal{L}_\PT^2(C([0,T],L_2(U,H)))$, and let
$W$ be a $U$-valued cylindrical Wiener process.
For $(\hatc)\in [0,T]\times \mathcal{L}_\PT^1(\mathbb{W})$,
let $X^{\hatc}\in
\mathcal{L}_\PT^1(\mathbb{W})$
 be the process defined by
\begin{equation}
  \label{eq:2015-08-29:03}
  X_t=\hat Y_{\hat t\wedge t}+\int_{\hat t}^{ \hat t\vee t} b_sds+
\int_{\hat t}^{ \hat t\vee t}\Phi_sdW_s\qquad\forall t\in [0,T].
\end{equation}

The first main result of the paper is the following  path-dependent It\=o's formula.

\begin{theorem}\label{2017-05-30:02}
  Suppose that $u$ satisfies 
Assumption~\ref{2017-05-30:11}.
For  $\hat Y\in \mathcal{L}_\PT^1(\mathbb{W})$
and $\hat t\in[0,T]$,
  let $X^{\hatc}$ be  the process defined by \eqref{eq:2015-08-29:03}.
Then
  \begin{enumerate}[(i)]
  \item\label{2017-05-31:02} for all $\omega\in\Omega$, $\mathcal{D}^-_tu(\cdot,X^\hatc(\omega))\in L^1((0,T),\mathbb{R})$;
  \item\label{2017-05-31:03} $\left\{\overline{
 \partial_\mathbb{W} u}(t,X^\hatc).(\mathbf{1}_{[t,T]}b_t)\right\}_{t\in[0,T]}\in L^1_\PT(\mathbb{R})$;
  \item\label{2017-05-31:04} $    \left\{ \overline{\partial_\mathbb{W} u}
(t,X^\hatc).( \mathbf{1}_{[t,T]}\Phi_t)\right\}_{t\in[0,T]} \in L^2_\PT(U^*)$;
  \item\label{2017-05-31:05}
$\left\{\mathbf{T}\left[\overline { \partial_\mathbb{W} ^2u}(t,X^\hatc),\mathbf{1}_{[t,T]}\Phi_t\right]
\right\}_{t\in[0,T]}\in L^1_\PT(\mathbb{R})$.
  \end{enumerate}
For $t\in[\hat t, T]$,
\begin{equation}\label{2017-05-31:09}
  \begin{split}
      u(&t,X^\hatc)=u(\hat t,\hat Y)+\int_{\hat t}^t\Big(
 \mathcal{D}^-_tu(s,X^\hatc)ds
+ 
\overline{ \partial_\mathbb{W} u}(s,X^\hatc).(\mathbf{1}_{[s,T]}b_s)
\Big)ds
\\
&
+\frac{1}{2}\int_{\hat t}^t
\mathbf{T}
\left[ \overline{\partial_\mathbb{W} ^2 u}(s,X^\hatc),\mathbf{1}_{[s,T]}\Phi_s\right]
  ds+
\int_{\hat t}^t \overline{ \partial_\mathbb{W} u}(s,X^\hatc).( \mathbf{1}_{[s,T]}\Phi_s) dW_s,  \qquad\mathbb{P}\mbox{-a.e.}.
\end{split}
\end{equation}
\end{theorem}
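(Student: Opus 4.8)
The plan is to establish \eqref{2017-05-31:09} by a Riemann-type discretization of $[\hat t,t]$, combined with a second-order Gâteaux--Taylor expansion in the path variable and the fundamental theorem of calculus in the time variable. I would first dispose of the integrability claims \emph{(\ref{2017-05-31:02})}--\emph{(\ref{2017-05-31:05})}, since they only serve to give meaning to the right-hand side. For \emph{(\ref{2017-05-31:02})}, fix $\omega$ and note that $s\mapsto X^\hatc_{s\wedge\cdot}(\omega)$ is a continuous map $[0,T]\to\mathbb{W}$, so its range $K_\omega$ is compact; Assumption~\ref{2017-05-30:11}\emph{(\ref{2017-05-31:10})}, in particular the bound \eqref{2017-05-31:00} applied with $K=K_\omega$, then gives $|\mathcal{D}^-_tu(\cdot,X^\hatc(\omega))|\le M_{K_\omega}$ a.e., whence the $L^1$ property. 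The remaining claims follow from the global bounds on $\partial_\mathbb{W} u$ and $\partial^2_\mathbb{W} u$ in Assumption~\ref{2017-05-30:11}\emph{(\ref{2017-05-31:06})}, the estimate $|\mathbf{T}[\overline{\partial^2_\mathbb{W} u}(s,X^\hatc),\mathbf{1}_{[s,T]}\Phi_s]|\le C|\Phi_s|^2_{L_2(U,H)}$ obtained exactly as in the computation preceding the Definition above, and the integrability of $b\in\mathcal{L}^1_\PT(\mathbb{W})$ and $\Phi\in\mathcal{L}^2_\PT(\Cb{L_2(U,H)})$, once the requisite adaptedness and joint measurability of the integrands is checked.

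For the formula itself, fix partitions $\hat t=t^n_0<\dots<t^n_{k_n}=t$ with mesh tending to $0$ and telescope $u(t,X^\hatc)-u(\hat t,\hat Y)=\sum_i\big(u(t_{i+1},X^\hatc)-u(t_i,X^\hatc)\big)$. Writing $u(t_i,X^\hatc)=u(t_i,X^\hatc_{t_i\wedge\cdot})$ by non-anticipativity, I split each increment into a \emph{horizontal} part $u(t_{i+1},X^\hatc_{t_i\wedge\cdot})-u(t_i,X^\hatc_{t_i\wedge\cdot})$, where the path is frozen at $t_i$ and only time advances, and a \emph{vertical} part $u(t_{i+1},X^\hatc_{t_{i+1}\wedge\cdot})-u(t_{i+1},X^\hatc_{t_i\wedge\cdot})$, where time is frozen at $t_{i+1}$ and the path is updated. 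The horizontal part is the increment on $[t_i,t_{i+1}]$ of the map $s\mapsto u(s,X^\hatc_{t_i\wedge\cdot})$, which by Remark~\ref{2017-06-01:01} admits $\mathcal{D}^-_tu(s,X^\hatc_{t_i\wedge\cdot})$ as its left derivative. Since this map is continuous and has an everywhere-defined left derivative that is a.e. bounded via \eqref{2017-05-31:00}, it is absolutely continuous, so the horizontal part equals $\int_{t_i}^{t_{i+1}}\mathcal{D}^-_tu(s,X^\hatc_{t_i\wedge\cdot})\,ds$; summing and using the a.e. continuity of $\mathbf{x}\mapsto\mathcal{D}^-_tu(t,\mathbf{x})$ together with $X^\hatc_{t_i\wedge\cdot}\to X^\hatc_{s\wedge\cdot}$ yields $\int_{\hat t}^t\mathcal{D}^-_tu(s,X^\hatc)\,ds$ in the limit.

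For the vertical part I apply the second-order Gâteaux--Taylor expansion available since $u(t_{i+1},\cdot)\in\GatWB{\mathbb{R}}$, along the segment joining the continuous paths $X^\hatc_{t_i\wedge\cdot}$ and $X^\hatc_{t_{i+1}\wedge\cdot}$, with increment $\mathbf{v}_i:=X^\hatc_{t_{i+1}\wedge\cdot}-X^\hatc_{t_i\wedge\cdot}$. This produces a first-order term $\partial_\mathbb{W} u(t_{i+1},X^\hatc_{t_i\wedge\cdot}).\mathbf{v}_i$ and a second-order remainder in $\partial^2_\mathbb{W} u$. The first-order terms should converge, after passing from the true increment $\mathbf{v}_i$ to the step bump $\mathbf{1}_{[t_i,T]}(X^\hatc_{t_{i+1}}-X^\hatc_{t_i})$ permitted by the $\sigma^s$-sequentially continuous extension and the non-anticipativity of Remark~\ref{2017-06-10:02}, to the drift integral $\int_{\hat t}^t\overline{\partial_\mathbb{W} u}(s,X^\hatc).(\mathbf{1}_{[s,T]}b_s)\,ds$ plus the stochastic integral $\int_{\hat t}^t\overline{\partial_\mathbb{W} u}(s,X^\hatc).(\mathbf{1}_{[s,T]}\Phi_s)\,dW_s$; here the structural facts are that each discretized integrand $\overline{\partial_\mathbb{W} u}(t_{i+1},X^\hatc_{t_i\wedge\cdot}).(\mathbf{1}_{[t_i,T]}\,\cdot\,)$ is $\mathcal{F}_{t_i}$-measurable and that Assumption~\ref{2017-05-30:11}\emph{(\ref{2017-05-31:07})} lets me replace the time slot $t_{i+1}$ by $s$ in the limit, so that the martingale part converges in $L^2(\Omega)$ by the It\=o isometry. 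For the second-order remainder, I replace $\mathbf{v}_i$ by the same step bump (the error is of genuinely higher order thanks to the uniform bound on $\partial^2_\mathbb{W} u$) and recognize that the outer products $(X^\hatc_{t_{i+1}}-X^\hatc_{t_i})^{\otimes 2}$ concentrate, along the orthonormal basis $\{e'_m\}_{m\in\mathcal{M}}$, on the quadratic variation $\sum_m(\Phi_se'_m)(\Phi_se'_m)^*\,ds$; this is exactly the content of the operator $\mathbf{T}$ introduced above, and summation produces $\tfrac12\int_{\hat t}^t\mathbf{T}[\overline{\partial^2_\mathbb{W} u}(s,X^\hatc),\mathbf{1}_{[s,T]}\Phi_s]\,ds$.

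I expect the second-order term to be the main obstacle, where two difficulties compound. First, one must control the Taylor remainder uniformly, replacing the intermediate evaluation point $X^\hatc_{t_i\wedge\cdot}+\theta\mathbf{v}_i$ and the time $t_{i+1}$ by $X^\hatc_{s\wedge\cdot}$ and $s$: this is precisely where the $\sigma^s$-sequential continuity of $\overline{\partial^2_\mathbb{W} u}$ built into Definition~\ref{2016-04-22:07}, together with Assumption~\ref{2017-05-30:11}\emph{(\ref{2017-05-31:07})} and the a.s. uniform continuity of the paths of $X^\hatc$, becomes essential. Second, one must establish convergence of the basis-summed quadratic-variation sums in infinite dimensions, which requires the Hilbert--Schmidt and trace bounds underlying the very definition of $\mathbf{T}$ in order to exchange the limit with the sum over $\mathcal{M}$. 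A localization by stopping times (for instance the first exit of $X^\hatc$ from a ball, using that sample paths are a.s. bounded) would make the $L^2$ estimates and the It\=o isometry applicable throughout. Finally, the first-order passage from the continuous increment $\mathbf{v}_i$ to the bump $\mathbf{1}_{[t_i,T]}(X^\hatc_{t_{i+1}}-X^\hatc_{t_i})$, which does not survive a naive pathwise estimate, is the other point demanding genuine care; I would handle it through the measure-type representation of the $\sigma^s$-continuous first derivative combined with a stochastic Fubini argument.
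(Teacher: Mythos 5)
Your skeleton coincides with the paper's: the same telescoping over a partition, the same split of each increment into a horizontal part (path frozen, time advances) and a vertical part (time frozen, path updated), the same Flett-type mean-value treatment of the horizontal part via $\mathcal{D}^-_t$, and the same compactness/uniform-bound arguments for \emph{(i)}--\emph{(iv)} (though you leave the predictability of the integrands unaddressed, which the paper obtains from Lemma~\ref{2015-08-30:05} together with Assumption~\ref{2017-05-30:11}\emph{(\ref{2017-05-31:07})}; that part is fixable). The genuine divergence is in the vertical part, and that is where your proposal has a gap. The paper never Taylor-expands $u(t^n_k,\cdot)$ along the continuous increment $\mathbf{v}_i=X_{t_{i+1}\wedge\cdot}-X_{t_i\wedge\cdot}$: it applies Proposition~\ref{2016-01-25:01} --- proved beforehand by approximating the path functional through the linear interpolation $\ell_\pi$ of stopped copies of $X$, applying the Hilbert-space formula of Proposition~\ref{2016-02-06:01} on $H^n$, and passing to the $\sigma^s$-limit --- so that each vertical increment is represented \emph{exactly} as drift, trace and stochastic integrals over $[t^n_{k-1},t^n_k]$. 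In that construction the step functions $\mathbf{1}_{[s,T]}b_s$ and $\mathbf{1}_{[s,T]}\Phi_s$ arise as honest $\sigma^s$-limits of interpolated paths, never by substituting a step bump for a continuous path increment; the only limit left in the telescoping is in the time slot $t^n_k\to s$, handled by Assumption~\ref{2017-05-30:11}\emph{(\ref{2017-05-31:07})} and dominated convergence.

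In your route, the two replacement steps (continuous increment $\mathbf{v}_i$ replaced by the bump $\mathbf{b}_i:=\mathbf{1}_{[t_i,T]}(X_{t_{i+1}}-X_{t_i})$) are precisely the missing proofs, and your justification of the second-order one is incorrect. The uniform bound $M_2$ on $\overline{\partial^2_\mathbb{W}u}$ only gives a per-interval error of size $M_2\,|\mathbf{v}_i-\mathbf{b}_i|_\infty\bigl(|\mathbf{v}_i|_\infty+|\mathbf{b}_i|_\infty\bigr)$; since $\mathbf{v}_i-\mathbf{b}_i$ equals $X_s-X_{t_{i+1}}$ on $[t_i,t_{i+1})$, both factors are of order $(t_{i+1}-t_i)^{1/2}$ in $L^2$, so the sum of these bounds over the partition is comparable to $\mathbb{E}\int_{\hat t}^{T}|\Phi_s|^2_{L_2(U,H)}\,ds$ --- the same magnitude as the trace term you are trying to isolate --- and pathwise it is even of order $\omega_X(\delta)^2/\delta$, where $\omega_X$ is a modulus of continuity of the path, which diverges for diffusion paths. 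Hence the replacement error is emphatically \emph{not} of higher order; any proof must exhibit cancellation through conditioning or martingale arguments, which you do not supply. For the first-order replacement you concede that the naive estimate fails and appeal to a ``measure-type representation of the $\sigma^s$-continuous derivative plus stochastic Fubini''; no such representation theorem is available in the paper or in your proposal (one would need a vector-measure Riesz representation of $\overline{\partial_\mathbb{W}u}(t,\mathbf{x})$ on $\Bone{H}$, measurable in $(t,\mathbf{x})$, plus verification of the hypotheses of a stochastic Fubini theorem), and constructing it is a substantial piece of theory rather than a routine completion. As it stands, the two steps that actually produce the stochastic integral and the trace term in \eqref{2017-05-31:09} are unproven, one of them asserted on demonstrably false grounds; the paper's architecture --- first prove the time-frozen formula of Proposition~\ref{2016-01-25:01} by finite-dimensional interpolation, then telescope --- exists precisely to avoid this obstruction.
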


\medskip

\begin{remark}
  Notice that, by Example~\ref{2017-06-12:00}, 
\eqref{2017-05-31:09} is a generalization of the
standard It\=o's formula in the  non-path-dependent case.
\end{remark}

The proof of Theorem~\ref{2017-05-30:02}
is obtained
through several partial results.
We begin by preparing a setting useful to approximate path-dependent functionals by non-path-dependent ones, for which we can use the standard (non-path-dependent) stochastic analysis on Hilbert spaces, as presented e.g.\ in \cite{DaPrato2014}.

\bigskip
\label{appr-begin}
For $n\geq 1$, we consider the product Hilbert space $H^n$ endowed with the
scalar product $\langle\cdot,\cdot\rangle_{H^n}$ defined by
$$
\langle x,x'\rangle_{H^n}\coloneqq \sum_{k=1}^n \langle x_k,x_k'\rangle_H\qquad
\forall x=(x_1,\ldots,x_n),\ x'=(x_1',\ldots,x_n')\in H^n.
$$
Let
$\pi\coloneqq \{0=t_1<t_2<\ldots<t_n=T\}$ be a partition of the interval $[0,T]$ and let 
$$
\delta(\pi)\coloneqq \sup_{i=1,\ldots,n-1}|t_{i+1}-t_i|.
$$
Define the operator
$$
\ell_\pi\colon H^n\rightarrow \mathbb{W}
$$
as the linear interpolation on the partition $\pi$, i.e.\
\begin{equation}\label{2015-08-30:08}
  \ell_\pi(x_1,\ldots,x_n)(t)\coloneqq x_1+\sum_{i=1}^{n-1}\frac{t\wedge t_{k+1}-t\wedge t_k}{t_{k+1}-t_k} \left( x_{k+1}-x_k \right)\qquad\forall t\in [0,T].
\end{equation}
The operator $\ell_\pi$ is linear and continuous, with operator norm $1$.
If $\mathbf{x}\in \mathbb{W}$ and if
$w_\mathbf{x}$
 denotes a modulus of continuity for $\mathbf{x}$, then
\begin{equation}
  \label{eq:2015-08-22:00}
 |\ell_\pi \left( \mathbf{x}_{t_2\wedge \cdot}(t),
\mathbf{x}_{t_3\wedge \cdot}(t),
\ldots
\mathbf{x}_{t_{n-1}\wedge \cdot}(t),
\mathbf{x}_{t_n\wedge \cdot}(t),
\mathbf{x}_{t_n\wedge \cdot}(t) \right)
-\mathbf{x}_{t\wedge \cdot}|_\infty\leq 2w_\mathbf{x} \left( \delta(\pi) \right) .
\end{equation}

Let $X$ be given by \eqref{eq:2015-08-29:03}.
We introduce the following $H$-valued processes, obtained by stopping $X$ at certain fixed times.
For $i=1,\ldots,n-1$ and $t\in[0,T]$, let $X^{(\pi,i)}_t$ be 
the continuous process
defined by
\begin{equation}
  \label{2016-02-08:00}
        X^{(\pi,i)}_t\coloneqq X^\hatc_{t_{i+1}\wedge t}=
\hat Y_{\hat t\wedge t\wedge t_{i+1}}+\int_{\hat t}^{\hat t \vee t} \mathbf{1}_{[0,t_{i+1})}(s)b_sds+
\int_{\hat t}^{\hat t \vee t}
\mathbf{1}_{[0,t_{i+1})}(s)\Phi_sdW_s
\end{equation}
and let $X^{(\pi,n)}_t\coloneqq X^\hatc_t$, $t\in[0,T]$.
We define the $H^n$-valued process $X^{(\pi)}$ by
$$
X^{(\pi)}_t\coloneqq ( X^{(\pi,1)}_t,\ldots,X^{(\pi,n)}_t ) \qquad\forall t\in[0,T].
$$
Notice that 
 $X^{(\pi)}\in
\mathcal{L}^1_{\PT} \left( C([0,T],H^n )\right) $.
The dynamics  of  $X^{(\pi)}$ is given by
$$
X^{(\pi)}_t=X^{(\pi)}_{\hat t}+\int_{\hat t}^tb^{(\pi)}_sds+\int_{\hat t}^t\Phi^{(\pi)}_sdW_s \qquad\forall t\in[\hat t,T],
$$
where
$$
X_{\hat t}^{(\pi)}=(
\hat Y_{\hat t\wedge t_{2}},
\hat Y_{\hat t\wedge t_{3}},
\ldots,
\hat Y_{\hat t},
\hat Y_{\hat t}
)\in H^n
$$
and where the coefficients $b^{(\pi)}$ and $\Phi^{(\pi)}$ are 
 the following
\begin{equation}\label{2016-01-28:00}
\hskip-4pt
\begin{dcases}
        b_s^{(\pi)}\coloneqq  (\mathbf{1}_{[0,t_2)}(s)b_s,\mathbf{1}_{[0,t_3)}(s)b_s,\ldots,&\\
\qquad \qquad \ldots,\mathbf{1}_{[0,t_{n-1})}(s)b_s,  
\mathbf{1}_{[0,t_{n})}(s)b_s,
\mathbf{1}_{[0,t_{n}]}(s)b_s) &\forall s\in[0,T]\\
      \Phi_s^{(\pi)}u\coloneqq  (\mathbf{1}_{[0,t_2)}(s)\Phi_su,\mathbf{1}_{[0,t_3)}(s)\Phi_su,\ldots, &\\
\qquad\qquad       \ldots,\mathbf{1}_{[0,t_{n-1})}(s)\Phi_su,
        \mathbf{1}_{[0,t_{n})}(s)\Phi_su,
        \mathbf{1}_{[0,t_{n}]}(s)\Phi_su) &\forall
      s\in[0,T],\ \forall u\in U.
\end{dcases}
\end{equation}
We can verify that $b^{(\pi)}\in L_\PT^1(H^n)$ by
\begin{gather*}
  \mathbb{E} \left[ \int_0^T|b^{(\pi)}_s|_{H^n} \right] =
  \mathbb{E} \left[ \int_0^T|b_s|_H \left( 1+\sum_{j=2}^n \mathbf{1}_{[0,t_i)}(s) \right)^{1/2} ds\right] \leq
n^{1/2}\mathbb{E} \left[ \int_0^T|b_s|_{H} \right]
\end{gather*}
and that  $\Phi^{(\pi)}\in L^2_\PT  \left(L_2(U,H^n) \right) $ by
\begin{gather*}
  \mathbb{E} \left[ \int_0^T|\Phi^{(\pi)}_s|^2_{L_2(U,H^n)} \right] =
  \mathbb{E} \left[ \int_0^T|\Phi_s|^2_{L_2(U,H)} \left( 1+\sum_{j=2}^n \mathbf{1}_{[0,t_i)}(s) \right) ds\right] \leq
n\mathbb{E} \left[ \int_0^T|\Phi_s|^2_{L_2(U,H)} \right].
\end{gather*}
We notice that,  by \eqref{eq:2015-08-22:00} and \eqref{2016-02-08:00},
\begin{equation}
  \label{eq:2015-08-29:00}
  \lim_{\delta\rightarrow 0^+}\sup_{\pi\colon \delta(\pi)\leq \delta}
\sup_{t\in[0,T]}\left|\ell_\pi ( X^{(\pi)}_t(\omega) ) -X^\hatc_{t\wedge \cdot}(\omega)
\right|_\infty=0\qquad \forall\omega\in \Omega.
\end{equation}
\label{appr-end}

\begin{remark}\label{2016-02-05:01}
  The importance of the choice of $b^{(\pi)}$ as in \eqref{2016-01-28:00} can be understood when we consider the composition $\ell_\pi(b^{(\pi)}_s(\omega))$.
If $\delta(\pi)\rightarrow 0$, then $\ell_\pi(b^{(\pi)}_s(\omega))$ converges pointwise to $\mathbf{1}_{[s,T]}(\cdot)b_s(\omega)$ everywhere on $[0,T]$.
On the contrary, if we consider
$$ 
\tilde b_s^{(\pi)}\coloneqq  (\mathbf{1}_{[0,t_1)}(s)b_s,\mathbf{1}_{[0,t_2)}(s)b_s,\ldots,
\mathbf{1}_{[0,t_{n-1})}(s)b_s,
\mathbf{1}_{[0,t_n]}(s)b_s)
$$
then the pointwise limit as $\delta(\pi)\rightarrow 0$ of $\ell_\pi(\tilde b_s^{(\pi)}(\omega))$
is  $0$ on $[0,s)$ and $b_s$ on $(s,T]$,
but it is not guaranteed that the limit  in $s$ exists.
In our approximation framework, we deal  with
sequential continuity with respect to the topology $\sigma^s$ in $\Sone{H}$, wich implies pointwise convergence, as clarified by
Proposition~\ref{2015-08-24:00}\emph{(\ref{2016-01-25:08})}.
Because of that, the choice of $b^{(\pi)}$ as in 
\eqref{2016-01-28:00} will be relevant.
The same comment holds for $\Phi^{(\pi)}$.
\end{remark}

We  will need the following measurability lemma.

\begin{lemma}\label{2015-08-30:05}
  Let $V,Y,Z$ be $H$-valued 
continuous
$\mathbb{F}$-adapted processes.
 Let $E$ be a Banach space and let 
  \begin{gather*}
    \bar f\colon  \mathbb{W}\times \Sones{H}
\times \Sones{H}\rightarrow E
  \end{gather*}
be a  sequentially continuous function.
 Then the process
$$
\Psi\coloneqq \left\{\bar f(V_{t\wedge \cdot},
  \mathbf{1}_{[t,T]} Y_{t},
  \mathbf{1}_{[t,T]}Z_{t})\right\}_{t\in[0,T]}
$$
 is
$\mathbb{F}$-adapted and left-continuous.
\end{lemma}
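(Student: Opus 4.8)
The plan is to establish the two assertions separately: $\mathbb{F}$-adaptedness of $\Psi$ and its pathwise left-continuity. In both the difficulty is that $\bar f$ is only \emph{sequentially} continuous and that its step-function arguments $\mathbf{1}_{[t,T]}Y_t$, $\mathbf{1}_{[t,T]}Z_t$ live in $\Sones{H}$, a non-metrizable space on which sequential continuity is strictly weaker than continuity. The common device I would use is to approximate these step functions, in the $\sigma^s$ topology, by continuous functions on which $\bar f$ can be controlled through Proposition~\ref{2015-08-24:00}.

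For adaptedness, fix $t\in[0,T]$. First I would choose a sequence $g_k\in C([0,T],\mathbb{R})$ depending only on $t$ and $k$, with $0\le g_k\le 1$, $g_k\equiv 1$ on $[t,T]$, and $g_k\to\mathbf{1}_{[t,T]}$ pointwise on $[0,T]$ (e.g.\ a ramp that vanishes on $[0,t-1/k]$ and climbs linearly to $1$ on $[t-1/k,t]$). Since $v\mapsto g_k v$ is continuous from $H$ into $\mathbb{W}$, the map $\omega\mapsto(V_{t\wedge\cdot}(\omega),g_kY_t(\omega),g_kZ_t(\omega))$ is $\mathcal{F}_t$-measurable into $\mathbb{W}^3$ for the sup-norm Borel structure, using adaptedness of $V,Y,Z$ and the $\mathcal{F}_t$-measurability of $\omega\mapsto V_{t\wedge\cdot}(\omega)$. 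The key point is that $\bar f|_{\mathbb{W}^3}$ is Borel measurable: because $\sigma^s\subseteq\tau_\infty$, uniform convergence in each slot forces convergence in $\mathbb{W}\times\Sones{H}\times\Sones{H}$, so $\bar f|_{\mathbb{W}^3}$ is sequentially continuous for the metrizable sup-norm topology, hence continuous, hence Borel. Composing, $\Psi^{(k)}_t:=\bar f(V_{t\wedge\cdot},g_kY_t,g_kZ_t)$ is $\mathcal{F}_t$-measurable. Finally, for each $\omega$ one has $g_kY_t(\omega)\to\mathbf{1}_{[t,T]}Y_t(\omega)$ and $g_kZ_t(\omega)\to\mathbf{1}_{[t,T]}Z_t(\omega)$ in $\sigma^s$ (uniformly bounded, with pointwise convergence, by Proposition~\ref{2015-08-24:00}), so sequential continuity of $\bar f$ gives $\Psi^{(k)}_t(\omega)\to\Psi_t(\omega)$ in $E$; as a pointwise limit of $\mathcal{F}_t$-measurable maps, $\Psi_t$ is $\mathcal{F}_t$-measurable.

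For left-continuity I would fix $\omega$, a time $t\in(0,T]$, and an arbitrary sequence $s_n\uparrow t$, and verify convergence of the three arguments. In the first slot, uniform continuity of $V(\omega)$ on the compact $[0,T]$ yields $|V_{s_n\wedge\cdot}(\omega)-V_{t\wedge\cdot}(\omega)|_\infty\to0$, i.e.\ convergence in $\mathbb{W}$. In the second and third slots, $\sup_n|Y_{s_n}(\omega)|_H<\infty$ and $\sup_n|Z_{s_n}(\omega)|_H<\infty$ by continuity on a compact interval, and for each fixed $r\in[0,T]$ I would check $\mathbf{1}_{[s_n,T]}(r)Y_{s_n}(\omega)\to\mathbf{1}_{[t,T]}(r)Y_t(\omega)$: for $r\ge t$ eventually $r\ge s_n$, so both indicators equal $1$ and one invokes continuity of $Y(\omega)$, while for $r<t$ eventually $s_n>r$, so both indicators vanish. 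By Proposition~\ref{2015-08-24:00} this is precisely $\sigma^s$-convergence, and likewise for $Z$. Sequential continuity of $\bar f$ then gives $\Psi_{s_n}(\omega)\to\Psi_t(\omega)$, establishing pathwise left-continuity.

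The main obstacle, and where the structure of the problem is genuinely used, is twofold. First, the measurability step for adaptedness cannot be read off directly from sequential continuity on $\Sones{H}$ and must be routed through the approximants $g_k$ and the metrizability of $\mathbb{W}$ under the sup norm; it is essential that the $g_k$ are chosen independently of $\omega$, so that $g_kY_t$ is manifestly measurable in $\omega$. Second, the pointwise check at $r=t$ is exactly what forces the \emph{left}-sided conclusion: an approach from the right would give $\mathbf{1}_{[s_n,T]}(t)=0\not\to 1=\mathbf{1}_{[t,T]}(t)$, so $\sigma^s$-convergence — and hence continuity of the composite — would fail at the jump time.
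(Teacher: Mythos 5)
Your proof is correct and follows essentially the same route as the paper's: approximate $\mathbf{1}_{[t,T]}$ by continuous scalar functions so that $\bar f$ only needs to be evaluated on $\mathbb{W}\times\mathbb{W}\times\mathbb{W}$, where sequential continuity plus metrizability of the sup-norm topology yields genuine continuity, then pass to the limit using Proposition~\ref{2015-08-24:00}; left-continuity is likewise checked slot by slot, with the same pointwise-plus-uniform-boundedness verification as in the paper. The one ingredient you assert rather than prove is the $\mathcal{F}_t$-measurability of $\omega\mapsto V_{t\wedge\cdot}(\omega)$ as a $\mathbb{W}$-valued random variable --- the paper derives it by approximating $V_{t\wedge\cdot}$ uniformly by the linear interpolations $\ell_\pi(V_{t_1\wedge t},\ldots,V_{t_n\wedge t})$, which are $\mathcal{F}_t$-measurable by adaptedness of $V$ --- but this is a standard fact and omitting its proof does not affect the soundness of your argument.
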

\begin{proof}
For all $\mathbf{x}\in \mathbb{W}$, the map
$$
[0,T]\rightarrow \mathbb{W},\ t\mapsto \mathbf{x}_{t\wedge \cdot}
$$
is continuous.
Then
$\{V_{t\wedge \cdot}\}_{t\in[0,T]}$ is a $\mathbb{W}$-valued continuous process.
We now show that $\{V_{t\wedge \cdot}\}_{t\in[0,T]}$ is $\mathbb{F}$-adapted.
Let $t\in[0,T]$. Let \mbox{$\pi=\{0=t_1<\ldots<t_n=T\}$} be a partition of $[0,T]$. It is clear that $(V_{t_1\wedge t},\ldots,V_{t_n\wedge t})$ is an $H^n$-valued $\mathcal{F}_t$-measurable random variable. Then $\ell_\pi(V_{t_1\wedge t},\ldots,V_{t_n\wedge t})$ is a $\mathbb{W}$-valued $\mathcal{F}_t$-adapted random variable. 
For all $\mathbf{x}\in\mathbb{W}$,
\begin{equation*}
 |\ell_\pi \left( \mathbf{x}_{t_1\wedge t},
\ldots,
\mathbf{x}_{t_n\wedge t})
 \right)
-\mathbf{x}_{t\wedge \cdot}|_\infty\leq w_\mathbf{x} \left( \delta(\pi) \right),
\end{equation*}
where $w_{\mathbf{x}}$ is a modulus of continuity for $\mathbf{x}$, hence,
 for all $\omega\in \Omega$,
$$
\lim_{\delta(\pi)\rightarrow 0}
\ell_\pi(V_{t_1\wedge t}(\omega),\ldots,V_{t_n\wedge t}(\omega))=
 V_{t\wedge \cdot}(\omega)\ 
\mbox{in }\mathbb{W},\mbox{ uniformly for }t\in[0,T].
$$
This shows 
that $\{V_{t\wedge \cdot}\}_{t\in[0,T]}$ is a $\mathbb{W}$-valued
$\mathbb{F}$-adapted
process. 
The same considerations hold for $\{Y_{t\wedge \cdot}\}_{t\in[0,T]}$ and for $\{Z_{t\wedge \cdot}\}_{t\in[0,T]}$.

Now let $t\in[0,T]$
and let $\{\varphi_n\}_{n\in \mathbb{N}}\subset C([0,T],\mathbb{R})$ be a sequence such that
 \begin{equation}
   \label{eq:2015-08-30:11}
   \begin{dcases}
        0\leq \varphi_n\leq1&\forall n\in \mathbb{N}\\
 \lim_{n\rightarrow \infty}\varphi_n(s)=\mathbf{1}_{[t,T]}(s)&\forall s\in[0,T].
\end{dcases}
\end{equation}
 Since, for every $n\in \mathbb{N}$, the map 
$
H\rightarrow \mathbb{W},\ h\mapsto \varphi_nh
$
is linear and continuous, we have that
$\varphi_n Y_{t}$
and
$\varphi_n Z_t$ are \mbox{$\mathbb{W}$-valued}, 
$\mathcal{F}_t$-measurable random variables.
It follows that $(V_{t\wedge \cdot},\varphi_nY_t,\varphi_nZ_t)$ is a $\mathbb{W}\times \mathbb{W}\times \mathbb{W}$-valued $\mathcal{F}_t$-measurable random variable.
The sequential continuity of $\bar f$
 implies the continuity of the restriction of 
$\bar f$ to $\mathbb{W}\times \mathbb{W}\times \mathbb{W}$, 
then
$\bar f(V_{t\wedge\cdot},\varphi_nY_t,\varphi_nZ_t)$
is an $E$-valued $\mathcal{F}_t$-measurable random variable. 
Now, by \eqref{eq:2015-08-30:11} and
Proposition \ref{2015-08-24:00}\emph{(\ref{2016-01-25:08})}, we have
$$
\begin{dcases}
  \lim_{n\rightarrow \infty}\varphi_nY_t(\omega)=\mathbf{1}_{[t,T]}Y_t(\omega)&\mbox{in }\Sones{H},\ \forall \omega\in \Omega,\\
\lim_{n\rightarrow \infty}\varphi_nZ_t(\omega)=\mathbf{1}_{[t,T]}Z_t(\omega)
&\mbox{in }\Sones{H},\ \forall \omega\in \Omega.
\end{dcases}
$$
By sequential continuity of $\bar f$, we conclude 
$$
\lim_{n\rightarrow \infty}
\bar f(V_{t\wedge\cdot},\varphi_nY_t,\varphi_nZ_t)
=
\bar f(V_{t\wedge\cdot},\mathbf{1}_{[t,T]}Y_t,\mathbf{1}_{[t,T]}Z_t)
\ \mbox{pointwise.}
$$ 
This shows that
 $\Psi_t$  is
 an $E$-valued $\mathcal{F}_t$-measurable random variable,
hence $\Psi$ is $\mathbb{F}$-adapted.

Let $\{t_n\}_{n\in \mathbb{N}}\subset [0,T]$ be a sequence converging to $t$ in $(0,T]$ from the left.
Then the sequence $\{V_{t_n\wedge \cdot}(\omega)\}_{n\in \mathbb{N}}$ converges to $V_{t\wedge \cdot}(\omega)$ in $\mathbb{W}$, for all  $\omega\in\Omega$. Moreover, by 
Proposition \ref{2015-08-24:00}\emph{(\ref{2016-01-25:08})} and continuity of $Y,Z$,
$$
\forall \omega\in\Omega,\ 
\begin{dcases}
  \lim_{n\rightarrow \infty}\mathbf{1}_{[t_n,T]}(\cdot)Y_{t_n}(\omega)=
\mathbf{1}_{[t,T]}(\cdot)Y_{t}&\mbox{in }\Sones{H}\\
\lim_{n\rightarrow \infty}\mathbf{1}_{[t_n,T]}(\cdot)Z_{t_n}=
\mathbf{1}_{[t,T]}(\cdot)Z_{t}&\mbox{in }\Sones{H}.
\end{dcases}
$$
Then, by sequential continuity of $\bar f$, 
we conclude
$\Psi_{t_n}(\omega)\rightarrow \Psi_t(\omega)$.
This proves the left continuity of $\Psi$.
\end{proof}

The  following
proposition
provides a version of It\=o's formula
for G\^ateaux differentiable functions that
will be used later.


\begin{proposition}\label{2016-02-06:01}
Let $\tilde b\in \mathcal{L}_\PT^1(\mathbb{W})$, $\tilde\Phi\in \mathcal{L}_\PT^2(C([0,T],L_2(U,H)))$,
and let $W$ be a $U$-valued cylindrical Wiener process.
Let $t_0\in[0,T]$ and $Y\in \mathcal{L}^1_{\mathcal{P}_T}(\mathbb{W})$.
Let $\tilde X\in \mathcal{L}^1_\PT(\mathbb{W})$ be the It\=o process defined by
\begin{equation}
  \tilde X_t=Y_{t\wedge t_0}+\int_{t_0}^{t_0\vee t} \tilde b_sds+\int_{t_0}^{t_0\vee t}\tilde \Phi_sdW_s\qquad\forall t\in [0,T].
\end{equation}
  Let $f\colon [0,T]\times H\rightarrow \mathbb{R}$ be such that the derivatives $ \partial _tf(t,x)$, $ \partial_vf(t,x)$, $ \partial ^2_{vw}f(t,x)$ exist for all $t\in[0,T]$, $x,v,w\in H$, and are jointly continuous with respect to $t,x,v,w$.
Suppose that
\begin{equation}\label{2016-02-06:07}
  \begin{dcases}
    \sup_{(t,x)\in[0,T]\times H}\frac{| \partial_ tf(t,x)|}{1+|x|_H}<\infty\\
\sup_{\substack{(t,x)\in[0,T]\times H\\v\in H,\ |v|_H\leq 1}}| \partial _vf(t,x)|<\infty\\
\sup_{\substack{(t,x)\in[0,T]\times H\\v,w\in H,\ |v|_H\vee |w|_H\leq 1}}| \partial^2 _{vw}f(t,x)|<\infty.
\end{dcases}
\end{equation}
Then
\begin{enumerate}[(i)]
\item\label{2016-02-06:02}
 $ \{\partial _tf(t,\tilde X_t)\}_{t\in[0,T]}\in \mathcal{L}^1_{\mathcal{P}_T}(C([0,T],\mathbb{R}))$;
\item
\label{2016-02-06:03}
 $\{ \partial _Hf(t,\tilde X_t).\tilde b_t\}_{t\in[0,T]}\in L^1_{\mathcal{P}_T}(\mathbb{R})$;
\item
\label{2016-02-06:04}
 $\{ \partial _Hf(t,\tilde X_t). \tilde \Phi_t\}_{t\in[0,T]}\in L^2_{\mathcal{P}_T}(U^*)$;
\item
\label{2016-02-06:05}
 $\{\operatorname{Tr}[\tilde \Phi_t^* \partial ^2_H(t,\tilde X_t)\tilde\Phi_t]\}_{t\in[0,T]}\in L^1_{\mathcal{P}_T}(\mathbb{R})$;
\end{enumerate}
and, for $t\in[t_0,T]$,
\begin{equation}\label{2016-02-06:06}
  \begin{split}
\hskip-10ptf(t,\tilde X_t)=& f(t_0,Y_{t_0})+\int_{t_0}^t \left( 
 \partial _tf(s,\tilde X_s)+ \partial _Hf(s,\tilde X_s).\tilde b_s+\frac{1}{2}
\operatorname{Tr}[\tilde \Phi_s^* \partial ^2_H(s,\tilde X_s)\tilde \Phi_s]
 \right) ds\\
&+
\int_{t_0}^t
 \partial _Hf(s,\tilde X_s).\tilde \Phi_s dW_s\qquad \qquad \mathbb{P}\mbox{-a.e..}
\end{split}
\end{equation}
\end{proposition}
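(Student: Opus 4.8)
The plan is to reduce the statement to the classical It\=o formula for $C^{1,2}$ functions on Hilbert spaces (as in \cite{DaPrato2014}) by a finite-dimensional projection argument, and then to remove the projection through a limiting procedure based on the growth bounds \eqref{2016-02-06:07} and the joint continuity of the derivatives of $f$.

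First I would fix the orthonormal basis $\mathfrak e=\{e_n\}$ of $H$ and let $P_N$ denote the orthogonal projection onto $H_N\coloneqq\operatorname{Span}\{e_1,\dots,e_N\}$. Setting $\hat f_N(t,x)\coloneqq f(t,P_Nx)$, the map $\hat f_N$ factors as the composition of the bounded linear operator $P_N$ with the restriction $f(t,\cdot)|_{H_N}$; since on the finite-dimensional space $H_N$ the existence and joint continuity of the first and second G\^ateaux derivatives of $f$ force $f(t,\cdot)|_{H_N}$ to be of class $C^{1,2}$, the chain rule yields $\hat f_N\in C^{1,2}([0,T]\times H)$ with $\partial_t\hat f_N(t,x)=\partial_tf(t,P_Nx)$, $D_H\hat f_N(t,x).v=\partial_Hf(t,P_Nx).P_Nv$ and $D^2_H\hat f_N(t,x).(v,w)=\partial^2_Hf(t,P_Nx).(P_Nv,P_Nw)$. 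The bounds \eqref{2016-02-06:07} transfer to $\hat f_N$ uniformly in $N$, so the classical It\=o formula applies to $\hat f_N(t,\tilde X_t)=f(t,P_N\tilde X_t)$ and produces \eqref{2016-02-06:06} with $\tilde X,\tilde b,\tilde\Phi$ replaced inside the derivatives of $f$ by $P_N\tilde X,P_N\tilde b,P_N\tilde\Phi$.

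Then I would let $N\to\infty$ and identify the limit of each term. Because $P_Nx\to x$ for every $x\in H$, the joint continuity of the derivatives gives pointwise (in $(s,\omega)$) convergence of all integrands, while \eqref{2016-02-06:07} furnishes the dominating functions: $|\partial_tf(s,P_N\tilde X_s)|\le C(1+|\tilde X_s|_H)$, $|\partial_Hf(s,P_N\tilde X_s).P_N\tilde b_s|\le C|\tilde b_s|_H$, and, for the second-order quantities, $|\partial^2_{P_N\tilde\Phi_se'_m,\,P_N\tilde\Phi_se'_m}f|\le C|\tilde\Phi_se'_m|_H^2$ with $\sum_{m\in\mathcal{M}}|\tilde\Phi_se'_m|_H^2=|\tilde\Phi_s|^2_{L_2(U,H)}$. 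These majorants are integrable against $ds\otimes d\mathbb{P}$ precisely because $\tilde X\in\mathcal{L}^1_{\mathcal{P}_T}(\mathbb{W})$, $\tilde b\in\mathcal{L}^1_{\mathcal{P}_T}(\mathbb{W})$ and $\tilde\Phi\in\mathcal{L}^2_{\mathcal{P}_T}(C([0,T],L_2(U,H)))$, and they simultaneously establish the integrability claims \emph{(\ref{2016-02-06:02})}--\emph{(\ref{2016-02-06:05})} (adaptedness, continuity and measurability being immediate from the continuity of the derivatives of $f$ and the progressive measurability of the processes). Dominated convergence, applied first in the basis index $m$ for the trace and the $U^*$-valued integrand and then in $(s,\omega)$, handles the Bochner integrals; the initial term converges by continuity of $f$, namely $f(t_0,P_NY_{t_0})\to f(t_0,Y_{t_0})$; and for the stochastic integral I would pass to the limit in $L^2$ by the It\=o isometry, after checking that the integrands converge in $L^2_{\mathcal{P}_T}(U^*)$.

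The main obstacle is the passage to the limit in the two second-order quantities — the trace term and the stochastic integrand — where convergence has to be justified term by term over the orthonormal basis $\{e'_m\}_{m\in\mathcal{M}}$ of $U$ and then globally. The delicate point is to exhibit a single summable majorant $C|\tilde\Phi_se'_m|_H^2$ that allows dominated convergence in $m$, and to verify that the resulting pointwise limit is in turn dominated by $C|\tilde\Phi_s|^2_{L_2(U,H)}$, which is integrable; this is exactly the mechanism guaranteeing that $\operatorname{Tr}[\tilde\Phi_s^*\partial^2_Hf(s,\tilde X_s)\tilde\Phi_s]$ is well defined, finite and basis-independent, in analogy with the computation preceding the definition of $\mathbf{T}[\cdot,\cdot]$.
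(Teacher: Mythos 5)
Your proposal is correct and follows essentially the same route as the paper's own proof: project onto finite-dimensional subspaces, apply the classical $C^{1,2}$ It\=o formula to $f(t,P_Nx)$ (whose derivatives inherit the bounds \eqref{2016-02-06:07} uniformly in $N$), and pass to the limit by dominated convergence. The only minor divergence is the mechanism for the pointwise $U^*$-norm convergence of the stochastic integrands: the paper argues through weakly convergent sequences in the unit ball of $U$ and compactness of Hilbert--Schmidt operators, whereas you dominate coordinate-wise along the basis $\{e'_m\}_{m\in\mathcal{M}}$, implicitly using that $|\xi|_{U^*}^2=\sum_{m\in\mathcal{M}}|\xi(e'_m)|^2$ --- both arguments are valid.
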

\begin{proof}
  \emph{(\ref{2016-02-06:02})},
  \emph{(\ref{2016-02-06:03})},
  \emph{(\ref{2016-02-06:04})},
and  \emph{(\ref{2016-02-06:05})} are easily obtained by
the assumptions on
  continuity and boundedness
of
 the differentials of $f$.

We  show how to obtain \eqref{2016-02-06:06}.
Let $\{H_n\}_{n\in \mathbb{N}}$ be an increasing sequence of finite dimensional subspaces of $H$ such that $\bigcup_{n\in \mathbb{N}}H_n$ is dense in $H$.
Let $P_n\colon H\rightarrow H_n$ be the orthogonal projection of $H$ onto $H_n$.
Define
$f_n(t,x)\coloneqq f(t,P_nx)$ for $(t,x)\in [0,T]\times H$, $n\in \mathbb{N}$.
Due to the continuity assumptions on $ \partial _tf$, $  \partial _Hf$, $\partial ^2_Hf$,  the restriction
$f_{|[0,T]\times H_n}$ of $f$ to $[0,T]\times H_n$ belongs to $C^{1,2}([0,T]\times H_n,\mathbb{R})$, hence
$f_n\in C^{1,2}([0,T]\times H,\mathbb{R})$.
Moreover, \eqref{2016-02-06:07} holds also for $f_n$, with bounds uniform in $n$.
Then, by \cite[p.\ 69, Theorem 2.10]{Gawarecki2011}), formula
\eqref{2016-02-06:06}
 holds for all $f_n$.
To conclude the proof it is enough to prove the following limits
\begin{align}
  f_n(t,\tilde X_t)&\rightarrow f(t,\tilde X_t)
&&
 \mathbb{P}\mbox{-a.s.},\ 
 \forall t\in[0,T]
\label{2016-02-06:08}\\
 \partial _tf_n(\cdot,\tilde X_\cdot)&\rightarrow \partial _tf(\cdot,\tilde X_\cdot)&&
\mbox{in\ } L^1_{\mathcal{P}_T}(\mathbb{R})
\label{2016-02-06:09}
\\
 \partial _Hf_n(\cdot,\tilde X_\cdot).\tilde b_\cdot
&\rightarrow \partial _H f(\cdot,\tilde X_\cdot).\tilde b_\cdot
&&\mbox{in}\ L^1_{\mathcal{P}_T}(\mathbb{R})
\label{2016-02-06:10}
\\
\operatorname{Tr}[\tilde \Phi^*_\cdot \partial ^2_Hf_n(\cdot,\tilde X_\cdot)\tilde \Phi_\cdot]&\rightarrow
\operatorname{Tr}[\tilde \Phi^*_\cdot \partial ^2_Hf(\cdot,\tilde X_\cdot)\tilde \Phi_\cdot]
&&\mbox{in\ }L^1_{\mathcal{P}_T}(\mathbb{R})
\label{2016-02-06:11}
\\
\partial _Hf_n(\cdot,\tilde X_\cdot).\tilde \Phi_\cdot&\rightarrow \partial _H f(\cdot,\tilde X_\cdot).\tilde \Phi_\cdot
&&\mbox{in\ }L^2_{\mathcal{P}_T}(U^*).
\label{2016-02-06:12}
\end{align}
Convergence \eqref{2016-02-06:08} is clear.
Since
\eqref{2016-02-06:07} holds with $f_n$ in place of $f$, with bounds uniform in $n$, 
in order
to prove 
\eqref{2016-02-06:09},
\eqref{2016-02-06:10},
\eqref{2016-02-06:11},
\eqref{2016-02-06:12},
it is sufficient to show that those convergences hold pointwise.
Let $\phi\in L_2(U,H)$ and $(t,x)\in [0,T]\times H$.
Let $\{u_n\}_{n\in \mathbb{N}}\subset U$ be a sequence such that $|u_n|_U\leq 1$ for all $n$ and $u_n\rightharpoonup u$. Since $\varphi$ is compact, $\varphi u_n\rightarrow \varphi u$ in $H$, hence $P_n\varphi u_n\rightarrow \varphi u$. By continuity of $ \partial _vf(t,x)$ in $t,x,v$, we then have
$$
\partial _Hf_n(t,x).(\varphi u_n)
=
\partial _Hf(t,P_nx).(P_n\varphi u_n)
\rightarrow
\partial _Hf(t,x).(\varphi u).
$$
Since we also have $\partial _Hf(t,x).(\varphi u_n)
\rightarrow
\partial _Hf(t,x).(\varphi u)$,
we conclude
 $\partial _Hf_n(t,x).\varphi\rightarrow \partial _Hf(t,x).\varphi $ in $U^*$.
This provides 
\eqref{2016-02-06:12}.
The other pointwise convergences  can be proved 
with similar arguments.
\end{proof}

Under the following assumption,
we 
prove
in Proposition~\ref{2016-01-25:01}
 a 
less general
version of Theorem~\ref{2017-05-30:02}, in which the functional $u$ is of the form $u(t,\mathbf{x})=f(\mathbf{x}_{t\wedge \cdot})$.

\begin{assumption}\label{2015-08-28:03}
The function
 $f$ belongs to
$\GatWB{\mathbb{R}}$ and
 its differentials $ \partial f$ and $ \partial ^2f$ are bounded, that is
  \begin{equation}
  \label{eq:2015-08-28:01}
M_1\coloneqq  
  \sup_{\substack{\mathbf{x},\mathbf{v}\in  \mathbb{W}\\
|\mathbf{v}|_\infty\leq 1}}\left| \partial f(\mathbf{x}).\mathbf{v}\right|< \infty
\end{equation}
\begin{equation}
  \label{eq:2015-08-28:02}
M_2\coloneqq
  \sup_{\substack{\mathbf{x},\mathbf{v},\mathbf{w}\in
\mathbb{W}\\
|\mathbf{w}|\vee|\mathbf{v}|_\infty\leq 1}}\left| \partial ^2f(\mathbf{x}).(\mathbf{v},\mathbf{w})
\right|< \infty.
\end{equation}
\end{assumption}

\bigskip
By Remark
\ref{2016-01-27:05},
due to the sequential continuity of the differentials, 
\eqref{eq:2015-08-28:01} and \eqref{eq:2015-08-28:02} are equivalent to
  \begin{equation}
\label{eq:2015-08-28:04}  
M_1=  \sup_{\substack{\mathbf{x}\in  \mathbb{W}\\
\mathbf{v}\in \Sone{H},\ |\mathbf{v}|_\infty\leq 1}}
\left|
\overline{ \partial f}(\mathbf{x}).\mathbf{v}\right|< \infty,
\end{equation}
\begin{equation}
  \label{eq:2015-08-28:05}
  M_2=  \sup_{\substack{\mathbf{x}\in\mathbb{W}\\
\mathbf{v},\mathbf{w}\in \Sone{H},\ |\mathbf{w}|\vee|\mathbf{v}|_\infty\leq 1}}\left|\overline{\partial^2f}(\mathbf{x}).(\mathbf{v},\mathbf{w})\right|< \infty.
\end{equation}

\medskip
\begin{proposition}
\label{2016-01-25:01}
  Suppose that $f$ satisfies Assumption \ref{2015-08-28:03}.
For  $\hat Y\in \mathcal{L}_\PT^1(\mathbb{W})$
and $\hat t\in[0,T]$,
  let $X^{\hatc}$ be  the process defined by \eqref{eq:2015-08-29:03}.
Then
  \begin{enumerate}[(i)]
  \item\label{2015-08-30:04} $\left\{\overline{
 \partial f}(X^\hatc_{t\wedge \cdot}).(\mathbf{1}_{[t,T]}b_t)\right\}_{t\in[0,T]}\in L^1_\PT(\mathbb{R})$;
  \item\label{2015-08-30:14} $    \left\{ \overline{\partial f}
(X^\hatc_{t\wedge \cdot}).( \mathbf{1}_{[t,T]}\Phi_t)\right\}_{t\in[0,T]} \in L^2_\PT(U^*)$;
  \item\label{2015-08-30:15} 
$\left\{\mathbf{T}\left[\overline { \partial ^2f}(X^\hatc_{t\wedge \cdot}),\mathbf{1}_{[t,T]}\Phi_t\right]
\right\}_{t\in[0,T]}\in L^1_\PT(\mathbb{R})$.
  \end{enumerate}
Moreover, for $t\in[\hat t, T]$,
\begin{equation}
  \label{eq:2015-08-29:04}
  \begin{split}
      f(X_{t\wedge \cdot}^\hatc)=f(\hat Y_{\hat t\wedge \cdot})&+\int_{\hat t}^t \left(
\overline{ \partial f}(X^\hatc _{s\wedge \cdot}).(\mathbf{1}_{[s,T]}b_s)+
\frac{1}{2}\mathbf{T}
\left[ \overline{\partial ^2 f}(X^\hatc_{s\wedge \cdot}),\mathbf{1}_{[s,T]}\Phi_s\right]
 \right) ds\\
&+
\int_{\hat t}^t \overline{
 \partial f}(X^\hatc_{s\wedge \cdot}).( \mathbf{1}_{[s,T]}\Phi_s) dW_s,  \qquad\mathbb{P}\mbox{-a.e.}.
\end{split}
\end{equation}
\end{proposition}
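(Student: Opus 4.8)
The plan is to reduce the path-dependent formula to the classical (non-path-dependent) It\^o formula in the Hilbert space $H^n$ furnished by Proposition~\ref{2016-02-06:01}, exploiting the approximation machinery set up in \eqref{2015-08-30:08}--\eqref{eq:2015-08-29:00}: one composes $f$ with the interpolation operator $\ell_\pi$ to obtain a genuine function on $H^n$, applies the standard It\^o formula to $f\circ\ell_\pi$ along the stopped process $X^{(\pi)}$, and lets $\delta(\pi)\to 0$.

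First I would settle the integrability claims \emph{(\ref{2015-08-30:04})}--\emph{(\ref{2015-08-30:15})}. Adaptedness and left-continuity of the three integrands follow from Lemma~\ref{2015-08-30:05}, applied with $V=X^\hatc$ and the sequentially continuous functions $(\mathbf{x},\mathbf{v},\mathbf{w})\mapsto\overline{\partial f}(\mathbf{x}).\mathbf{v}$ (taking $Y=Z=b$) and $(\mathbf{x},\mathbf{v},\mathbf{w})\mapsto\overline{\partial^2 f}(\mathbf{x}).(\mathbf{v},\mathbf{w})$ (taking $Y=Z=\Phi e'_m$); for the $U^*$-valued object in \emph{(\ref{2015-08-30:14})} and for the series in \emph{(\ref{2015-08-30:15})} one passes through the actions on each basis vector $e'_m$, $m\in\mathcal{M}$, and uses separability of $U$. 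The $L^p$-bounds are then immediate from \eqref{eq:2015-08-28:04}--\eqref{eq:2015-08-28:05}: since $|\mathbf{1}_{[t,T]}b_t|_\infty=|b_t|_H$ one has $|\overline{\partial f}(X^\hatc_{t\wedge\cdot}).(\mathbf{1}_{[t,T]}b_t)|\le M_1|b_t|_H$, while $|\overline{\partial f}(X^\hatc_{t\wedge\cdot}).(\mathbf{1}_{[t,T]}\Phi_t)|_{U^*}^2\le M_1^2|\Phi_t|_{L_2(U,H)}^2$ and $|\mathbf{T}[\overline{\partial^2 f}(X^\hatc_{t\wedge\cdot}),\mathbf{1}_{[t,T]}\Phi_t]|\le M_2|\Phi_t|_{L_2(U,H)}^2$; together with $b\in\mathcal{L}^1_\PT$, $\Phi\in\mathcal{L}^2_\PT$ these give the three membership statements.

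Next, fix a partition $\pi$ and set $g\coloneqq f\circ\ell_\pi\colon H^n\to\mathbb{R}$. As $\ell_\pi$ is linear and continuous with operator norm $1$ and $f\in\mathcal{G}^2(\mathbb{W},\mathbb{R})$, the chain rule yields $g\in\mathcal{G}^2(H^n,\mathbb{R})$ with $\partial g(x).\xi=\partial f(\ell_\pi x).(\ell_\pi\xi)$ and $\partial^2 g(x).(\xi,\eta)=\partial^2 f(\ell_\pi x).(\ell_\pi\xi,\ell_\pi\eta)$, and the boundedness hypotheses \eqref{2016-02-06:07} (with $\partial_t g\equiv 0$) follow from \eqref{eq:2015-08-28:01}--\eqref{eq:2015-08-28:02}. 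Applying Proposition~\ref{2016-02-06:01} to $g$ and the $H^n$-valued It\^o process $X^{(\pi)}$ with coefficients $b^{(\pi)},\Phi^{(\pi)}$, and rewriting each derivative through the chain rule, I obtain, for $t\in[\hat t,T]$,
\begin{equation*}
\begin{split}
f(\ell_\pi X^{(\pi)}_t)=f(\ell_\pi X^{(\pi)}_{\hat t})
&+\int_{\hat t}^t\Big(\partial f(\ell_\pi X^{(\pi)}_s).(\ell_\pi b^{(\pi)}_s)
+\tfrac12\sum_{m\in\mathcal{M}}\partial^2 f(\ell_\pi X^{(\pi)}_s).(\ell_\pi\Phi^{(\pi)}_s e'_m,\ell_\pi\Phi^{(\pi)}_s e'_m)\Big)ds\\
&+\int_{\hat t}^t\partial f(\ell_\pi X^{(\pi)}_s).(\ell_\pi\Phi^{(\pi)}_s)\,dW_s,\qquad\mathbb{P}\mbox{-a.e.}.
\end{split}
\end{equation*}

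Finally I would take a sequence of partitions with $\delta(\pi)\to 0$ and pass to the limit term by term to recover \eqref{eq:2015-08-29:04}. By \eqref{eq:2015-08-29:00} and continuity of $f$, $f(\ell_\pi X^{(\pi)}_t)\to f(X^\hatc_{t\wedge\cdot})$ and $f(\ell_\pi X^{(\pi)}_{\hat t})\to f(\hat Y_{\hat t\wedge\cdot})$. By Remark~\ref{2016-02-05:01} the functions $\ell_\pi b^{(\pi)}_s$ and $\ell_\pi\Phi^{(\pi)}_s e'_m$ converge pointwise to $\mathbf{1}_{[s,T]}b_s$ and $\mathbf{1}_{[s,T]}\Phi_s e'_m$ and are uniformly bounded (their interpolation nodes take only the values $0$ or $b_s$, resp.\ $\Phi_s e'_m$, so $|\ell_\pi b^{(\pi)}_s|_\infty\le|b_s|_H$ and $|\ell_\pi\Phi^{(\pi)}_s e'_m|_\infty\le|\Phi_s e'_m|_H$), whence by Proposition~\ref{2015-08-24:00}\emph{(\ref{2016-01-25:08})} they converge in $\Sones{H}$; combined with $\ell_\pi X^{(\pi)}_s\to X^\hatc_{s\wedge\cdot}$ in $\mathbb{W}$ and the sequential continuity of $\overline{\partial f}$ and $\overline{\partial^2 f}$, each integrand converges pointwise to the corresponding one in \eqref{eq:2015-08-29:04}. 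The main obstacle is the interchange of limits. For the trace term this requires passing the limit inside the series over $m\in\mathcal{M}$, which I would justify by dominated convergence using $|\partial^2 f(\ell_\pi X^{(\pi)}_s).(\ell_\pi\Phi^{(\pi)}_s e'_m,\ell_\pi\Phi^{(\pi)}_s e'_m)|\le M_2|\Phi_s e'_m|_H^2$ together with $\sum_{m\in\mathcal{M}}|\Phi_s e'_m|_H^2=|\Phi_s|_{L_2(U,H)}^2$, identifying the limit as $\mathbf{T}[\overline{\partial^2 f}(X^\hatc_{s\wedge\cdot}),\mathbf{1}_{[s,T]}\Phi_s]$; the $ds$-integrals then converge by a further dominated convergence resting on the uniform bounds $M_1|b_s|_H$ and $M_2|\Phi_s|_{L_2(U,H)}^2$ (integrable in $s$ for a.e.\ $\omega$). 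The stochastic integral converges because its integrands converge to $\overline{\partial f}(X^\hatc_{\cdot\wedge\cdot}).(\mathbf{1}_{[\cdot,T]}\Phi_\cdot)$ in $L^2_\PT(U^*)$ (again by domination via $M_1$ and $|\Phi|_{L_2(U,H)}^2$), so by the It\^o isometry the integrals converge in $L^2(\Omega)$. Passing to a subsequence to upgrade convergence in probability to $\mathbb{P}$-a.e.\ convergence yields \eqref{eq:2015-08-29:04}.
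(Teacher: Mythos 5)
Your proposal is correct and follows essentially the same route as the paper's own proof: predictability of the integrands via Lemma~\ref{2015-08-30:05} (plus separability of $U$ and Pettis's measurability theorem), the integrability claims from the bounds \eqref{eq:2015-08-28:04}--\eqref{eq:2015-08-28:05}, the It\=o formula of Proposition~\ref{2016-02-06:01} applied to $f\circ\ell_\pi$ along the stopped process $X^{(\pi)}$, and a term-by-term passage to the limit $\delta(\pi)\to 0$ driven by Proposition~\ref{2015-08-24:00}, $\Sones{H}$-sequential continuity of $\overline{\partial f},\overline{\partial^2 f}$, and dominated convergence.

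The one point where you genuinely deviate is the pointwise $U^*$-norm convergence of the stochastic integrand (the paper's Step 3). The paper obtains it by testing against arbitrary weakly convergent sequences $u_n\rightharpoonup u$ in the unit ball of $U$, using that the Hilbert--Schmidt operator $\Phi_s(\omega)$ is compact, so that $\Phi_s(\omega)u_n\to\Phi_s(\omega)u$ strongly in $H$. Your route --- convergence on each basis vector $e'_m$ together with the uniform domination $|\partial f(\ell_\pi X^{(\pi)}_s).(\ell_\pi\Phi^{(\pi)}_s e'_m)|\le M_1|\Phi_s e'_m|_H$ --- also works, because by Parseval $|\xi|_{U^*}^2=\sum_{m\in\mathcal{M}}|\xi(e'_m)|^2$, so dominated convergence for series (dominating sequence $4M_1^2|\Phi_s e'_m|_H^2$, summing to $4M_1^2|\Phi_s|^2_{L_2(U,H)}$) upgrades basis-wise convergence to $U^*$-norm convergence for each fixed $(\omega,s)$; a second dominated convergence in $(\omega,s)$ then gives the $L^2_\PT(U^*)$ convergence needed for the It\=o isometry. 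This exactly parallels the treatment of the trace term (yours and the paper's), and is arguably more elementary than the paper's argument, since it avoids weak sequential compactness and operator compactness. The only caution is that this justification should be made explicit: basis-wise (i.e.\ weak-$*$) convergence of the functionals alone, without the square-summable domination, would not yield norm convergence in $U^*$, and norm convergence is what the isometry requires. As written, your parenthetical ``by domination via $M_1$ and $|\Phi|^2_{L_2(U,H)}$'' supplies precisely the needed ingredients, so I regard this as a compression rather than a gap.
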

\begin{proof}
By Lemma~\ref{2015-08-30:05},
 the process
$$ 
 \left\{ \overline{ \partial f}(X^\hatc_{t\wedge \cdot}).(\mathbf{1}_{[t,T]}b_t) \right\} _{t\in[0,T]}
$$
is left-continuous and adapted, hence  predictable.
Similarly, the process
\begin{equation}
  \label{eq:2016-01-27:06}
   \left\{ \overline {
 \partial f}(X^\hatc_{t\wedge \cdot}).(\mathbf{1}_{[t,T]} \Phi_tu)
 \right\} _{t\in[0,T]}
\end{equation}
is left-continuous and adapted, hence predictable, for all $u\in U$.

If
$(\omega,t)\in\Omega_T$ and $\{u_n\}_{n\in \mathbb{N}}$ is a sequence converging to $0$
 in $U$, then
 \begin{equation}
   \label{eq:2016-04-22:00}
   \{\mathbf{1}_{[t,T]}\Phi_t(\omega)u_n\}_{n\in \mathbb{N}}
 \end{equation}
 is a uniformly bounded sequence in $\Sone{H}$,
converging pointwise to $0$. Then,
by
Proposition \ref{2015-08-24:00}\emph{(\ref{2016-01-25:08})},
the sequence
\eqref{eq:2016-04-22:00} converges to $0$ in $\Sones{H}$.
By $\Sones{H}$-sequential continuity of 
$
\overline { \partial f}(X^\hatc_{t\wedge \cdot}(\omega))
$,
we conclude 
$$
\lim_{n\rightarrow \infty}\overline {
 \partial f}(X^\hatc_{t\wedge \cdot}(\omega)).(\mathbf{1}_{[t,T]}\Phi_t(\omega)u_n)=0.
$$
This shows that, for all $(\omega,t)\in\Omega_T$, $ \overline { \partial f}(X^\hatc_{t\wedge \cdot}(\omega)).(\mathbf{1}_{
[t,T}\Phi_t(\omega))\in U^*$.
Then, by separability of $U$ and by Pettis's  measurability theorem, we have that
$$
   \left\{ \overline { \partial f}(X^\hatc_{t\wedge \cdot}).(\mathbf{1}_{[t,T]}\Phi_t)
 \right\} _{t\in[0,T]}
$$
is a $U^*$-valued predictable process.

We now show the integrability properties in
\emph{(\ref{2015-08-30:04})} and
\emph{(\ref{2015-08-30:14})}.
By \eqref{eq:2015-08-28:04}, we have
$$
\mathbb{E} \left[ \int_0^T\left|\overline{
 \partial f}(X^\hatc_{s\wedge \cdot}).(\mathbf{1}_{[s,T]}b_s)\right|ds \right] \leq M_1T |b|_{\mathcal{L}^1_\PT(C([0,T],H))},
$$
which concludes the proof of \emph{(\ref{2015-08-30:04})}.
Similarly, by  \eqref{eq:2015-08-28:05},
\begin{equation*}
  \begin{split}
    \mathbb{E} \left[ \int_0^T\sup_{\substack{u\in U\\ |u|_U\leq 1}}\left|\overline{ \partial f}(X^\hatc_{s\wedge \cdot}).(\mathbf{1}_{[s,T]}\Phi_su)\right|^2ds \right] &\leq
M_1^2
\mathbb{E} 
\left[\int_0^T
  \left|
    \Phi_s
  \right|_{L(U,H)}^2 ds
\right]\\
&\leq
M_1^2T
|\Phi|^2_{\mathcal{L}^2_\PT(C([0,T],L_2(U,H)))}.
\end{split}
\end{equation*}
This concludes the proof of \emph{(\ref{2015-08-30:14})}.

To  show \emph{(\ref{2015-08-30:15})},
we first prove that the sum defining $\mathbf{T}\left[\overline { \partial ^2 f}(X^\hatc_{t\wedge \cdot}),\mathbf{1}_{[t,T]}\Phi_t\right]$ is convergent.
By \eqref{eq:2015-08-28:05}, we have, 
\begin{equation}\label{2015-08-30:16}
  \begin{split}
   \sum_{m\in \mathcal{M}}\left|\overline {
 \partial ^2f}(X^\hatc_{t\wedge \cdot}).(\mathbf{1}_{[t,T]}\Phi_te'_m,\mathbf{1}_{[t,T]}\Phi_te'_m)\right|
&\leq
M_2\sum_{m\in \mathcal{M}}
|\mathbf{1}_{[t,T]}\Phi_te'_m|^2_\infty\\
&=
M_2\sum_{m\in \mathcal{M}}
|\Phi_te'_m|^2_H\\
&=M_2|\Phi_t|^2_{L_2(U,H)}.
  \end{split}
\end{equation}
Then $\mathbf{T}\left[\overline { \partial ^2f}(X^\hatc_{t\wedge \cdot})
,\mathbf{1}_{[t,T]}\Phi_t\right]$ is well defined, for all $t\in [0,T]$.
By Lemma
\ref{2015-08-30:05},
for every $m\in \mathcal{M}$, the process
\begin{equation}\label{2017-05-31:08}
   \left\{ \overline{ \partial ^2f}(t,X^\hatc_{t\wedge \cdot}).(\mathbf{1}_{[t,T]}\Phi_te'_m,\mathbf{1}_{[t,T]}\Phi_te'_m) \right\}_{t\in[0,T]}
 \end{equation}
 is adapted and left-continuous, hence predictable.
Then $\left\{\mathbf{T}\left[\overline { \partial ^2
f}
(X^\hatc_{t\wedge \cdot}),\mathbf{1}_{[t,T]}\Phi_t\right]\right\}_{t\in[0,T]}$ is predictable. It is also integrable, by
\eqref{2015-08-30:16}.

\smallskip
We finally address formula \eqref{eq:2015-08-29:04}.
We will derive it from the standard It\=o's formula in Hilbert spaces,
by using the approximation framework introduced at pp.~\pageref{appr-begin}--\pageref{appr-end}.

Since,
by Assumption \ref{2015-08-28:03},
$f\in 
\Gatot{\mathbb{W}}{\mathbb{R}}{2}
$, by linearity of $\ell_\pi$ we have that
$$
f_\pi\colon  H^n\rightarrow \mathbb{R},\ x\mapsto f(\ell_\pi (x))
$$
  $f_\pi$ is strongly continuously G\^ateaux differentiable up to order $2$ on $H^n$, with
    \begin{equation}
      \label{2015-08-30:20}
       \partial f_\pi(x).v= \partial 
f(\ell_\pi(x)).\ell_\pi(v),
    \end{equation}
for all $ (x,v)\in H^n\times H^n$,
    \begin{equation}
      \label{2015-08-30:21}
       \partial ^2f_\pi(x).(v,w)= \partial ^2f(\ell_\pi(x)
).(\ell_\pi(v),\ell_\pi(w)),
    \end{equation}
for all $ (x,v,w)\in  H^n\times H^n\times H^n$.
Then we can apply
the standard
 It\=o's formula, in the version
provided by Proposition \ref{2016-02-06:01},
 to the predictable pathwise continuous process
$$
\left\{f_\pi ( X^{(\pi)}_t )\right\}_{t\in[0,T]} =
 \left\{ f ( \ell_\pi ( X^{(\pi)}_t )  ) \right\}_{t\in[0,T]}.
$$
For $t\in[\hat t, T]$, we have
\begin{equation}\label{2015-08-30:18}
  \begin{multlined}[c][.85\displaywidth]
  f_\pi(X^{(\pi)}_t)
=f_\pi(X^{(\pi)}_{\hat t})  +
\int_{\hat t}^t 
  \left( 
     \partial f_\pi(X^{(\pi)}_s). b^{(\pi)}_s  
   \frac{1}{2}\operatorname{Tr} \left( ( \Phi^{(\pi)}_s ) ^* \partial ^2f_\pi(X_s^{(\pi)})\Phi^{(\pi)}_s  \right)  
    \right) ds\\
  +\int_{\hat t}^t       \partial f_\pi(X^{(\pi)}_s). \Phi^{(\pi)}_s dW_s \quad \mathbb{P}\mbox{-a.e.}.
  \end{multlined}
\end{equation}

Through several steps,
we are going to prove that the terms appearing in 
\eqref{2015-08-30:18} converge to the corresponding terms in 
\eqref{eq:2015-08-29:04}, as $\delta(\pi)\rightarrow 0$.

Let $\{\pi_n\}_{n\in \mathbb{N}}$ be a sequence of partition of $[0,T]$ such that $\lim_{n\rightarrow \infty}\delta(\pi_n)=0$.

\smallskip
\emph{\underline{Step 1.}} 
By
\eqref{eq:2015-08-29:00} and by continuity of $f$, we 
 immediately have that, for $t\in[0,T]$,
$f_\pi(X^{(\pi_n)}_t)\rightarrow f(X^\hatc_{t\wedge \cdot})$
 $\mathbb{P}$-a.e..

\smallskip
\emph{\underline{Step 2.}} We show that 
\begin{equation}
  \label{eq:2016-01-28:02}
  \lim_{n\rightarrow \infty}  \partial  f_{\pi_n}(X_{\#}^{(\pi_n)})
.(b_\#^{(\pi_n)})=
\overline{
 \partial f}(X^\hatc_{\#\wedge \cdot}).(\mathbf{1}_{[\#,T]}b_\#)\ \mbox{in }L^1_\PT(\mathbb{R}).
\end{equation}
We notice that, by the very definition of $b_s^{(\pi_n)}$ in \eqref{2016-01-28:00} and of $\ell_\pi$ (see also Remark~\ref{2016-02-05:01}), we have, for all $\omega\in\Omega$ and $s\in[0,T]$,
$$
\ell_{\pi_n}(b_s^{(\pi_n)}(\omega))=
\begin{dcases}
  b_s(\omega) &\mbox{on }[s,T]\\
  0& \mbox{on }[0,s-2\delta(\pi_n)]
\end{dcases}
$$
and  $\sup_{n\in \mathbb{N}} |\ell_{\pi_n}(b_s^{(\pi_n)}(\omega))|_\infty\leq |b_s(\omega)|_H$.
By Proposition~\ref{2015-08-24:00}\emph{(\ref{2016-01-25:08})}, it follows
\begin{equation}
  \label{2015-08-30:22}
  \lim_{n\rightarrow \infty}\ell_{\pi_n}(b_s^{(\pi_n)}(\omega))=\mathbf{1}_{[s,T]}b_s(\omega)\mbox{ in }\Sones{H}, \ \forall (\omega,s)\in\Omega_T.
\end{equation}
By  \eqref{eq:2015-08-28:04}
and \eqref{2015-08-30:20},
\begin{equation}\label{2015-08-30:23}
  \begin{multlined}[c][.85\displaywidth]
    \sup_{s\in[0,T]}|   \partial f_{\pi_n}(X^{(\pi_n)}_s).b_s^{(\pi_n)}|+
\sup_{s\in[0,T]}|\overline{
 \partial f}(X^\hatc_{s\wedge \cdot}).(\mathbf{1}_{[s,T]}b_s)|\\
\leq
M_1 \left( \sup_{s\in[0,T]}|\ell_{\pi_n}( b_s^{(\pi_n)})|_\infty
+\sup_{s\in[0,T]}|\mathbf{1}_{[s,T]}b_s|_\infty \right) 
= 2M_1|b|_\infty.
\end{multlined}
\end{equation}
By
\eqref{eq:2015-08-29:00}, \eqref{2015-08-30:22},
 \eqref{2015-08-30:23}, sequential continuity of $\overline{ \partial f}$, and
  Lebesgue's  dominated convergence theorem,  we obtain
$$
\lim_{n\rightarrow \infty}\mathbb{E} \left[ \int_0^T
\left| \partial f_{\pi_n}(X_s^{(\pi_n)}).b_s^{(\pi_n)}-
\overline{ \partial f}(X^\hatc_{s\wedge \cdot}).(\mathbf{1}_{[s,T]}b_s)\right|ds
 \right] =0,
$$
which
provides \eqref{eq:2016-01-28:02}.

\smallskip
\emph{\underline{Step 3.}} We show that
\begin{equation}
  \label{eq:2016-01-28:01}
  \lim_{n\rightarrow \infty} \partial f_{\pi_n}(X_\#^{(\pi_n)}). \Phi_\#^{(\pi_n)}=\overline{
 \partial f}(X^\hatc_{\#\wedge \cdot}).(\mathbf{1}_{[\#,T]}\Phi_\#)\ \mbox{in }L^2_\PT(U^*).
\end{equation}
Let $\{u_n\}_{n\in \mathbb{N}}$ be a sequence weakly convergent to $u$ in the unit ball of $U$. 
Since $\Phi_s(\omega)$ is compact, 
 $\Phi_s(\omega)u_n\rightarrow \Phi_s(\omega)u$ strongly in $H$
 for all $(\omega,s)\in \Omega_T$.
We also have, for $n\in \mathbb{N}$,
$$
\ell_{\pi_n} ( \Phi_s^{(\pi_n)}(\omega)u_n ) =
\begin{dcases}
  \Phi_s(\omega)u_n&\mbox{on }[s,T]\\
  0& \mbox{on }[0,s-2\delta(\pi_n)).
\end{dcases}
$$
and 
$$
\sup_{n\in \mathbb{N}} \left|\ell_{\pi_n} ( \Phi_s^{(\pi_n)}(\omega)u_n)\right|_\infty\leq \sup_{n\in \mathbb{N}}|\Phi_s(\omega)u_n|_H\leq |\Phi_s(\omega)|_{L(U,H)}.
$$
Then, by Proposition~\ref{2015-08-24:00}\emph{(\ref{2016-01-25:08})},
\begin{equation}
  \label{2015-08-30:22-b}
  \lim_{n\rightarrow \infty}\ell_{\pi_n} ( \Phi_s^{(\pi_n)}(\omega)u_n ) =\mathbf{1}_{[s,T]} \Phi_s(\omega)u\  \mbox{in}\ \Sones{H},\ \forall (\omega,s)\in\Omega_T.
\end{equation}
By \eqref{eq:2015-08-29:00},
\eqref{2015-08-30:20}, 
 \eqref{2015-08-30:22-b},
we obtain
$$
\lim_{n\rightarrow \infty}
\left| \partial f_{\pi_n}(X_s^{(\pi_n)}). (\Phi_s^{(\pi_n)}u_n)-\overline{ \partial f}(X^\hatc_{s\wedge \cdot}).(\mathbf{1}_{[s,T]}\Phi_su)
\right|
=0\qquad \forall (\omega,s)\in \Omega_T.
$$
By \eqref{2015-08-30:22-b} and sequential continuity of $\overline{ \partial f}$,
we  have
$$
\lim_{n\rightarrow \infty}
\left|\overline{ \partial f}(X^\hatc_{s\wedge \cdot}).(\mathbf{1}_{[s,T]}\Phi_s(u_n-u))
\right|
=0\qquad \forall (\omega,s)\in \Omega_T.
$$
Since the weakly convergent sequence $\{u_n\}_{n\in \mathbb{N}}$ is arbitrary, the two limits above let us to conclude
\begin{equation}
  \label{eq:2016-01-28:04}
  \lim_{n\rightarrow \infty}
\left| \partial  f_{\pi_n}(X_s^{(\pi_n)}).\Phi_s^{(\pi_n)}-\overline{
 \partial f}(X^\hatc_{s\wedge \cdot}).(\mathbf{1}_{[s,T]}\Phi_s)
\right|_{U^*}
=0\qquad \forall (\omega,s)\in \Omega_T.
\end{equation}
Moreover,  by \eqref{eq:2015-08-28:04}
and \eqref{2015-08-30:20}, for $u\in U$, $|u|_U=1$,
\begin{equation}\label{2015-08-30:23-b}
  \begin{split}
\hskip-10pt        \sup_{s\in[0,T]}|   \partial f_{\pi_n}
(X^{(\pi_n)}_s).&\Phi_s^{(\pi_n)}u|+
\sup_{s\in[0,T]}|\overline{ \partial f}(X^\hatc_{s\wedge \cdot}).(\mathbf{1}_{[s,T]}\Phi_su)|\\
&\leq
M_1 \left( \sup_{s\in[0,T]}\left|\ell_{\pi_n}  ( \Phi_s^{(\pi_n)}u ) \right|_\infty
+\sup_{s\in[0,T]}\left|\mathbf{1}_{[s,T]}  \Phi_su  \right|_\infty \right) \\
&\leq 2M_1\sup_{s\in[0,T]}|\Phi_s|_{L(U,H)}
\leq 2M_1\sup_{s\in[0,T]}|\Phi_s|_{L_2(U,H)}.
\end{split}
\end{equation}
By \eqref{eq:2016-01-28:04},
 \eqref{2015-08-30:23-b}, 
 and by Lebesgue's  dominated convergence theorem,  we obtain
$$
\lim_{n\rightarrow \infty}\mathbb{E} \left[ \int_0^T
\left| \partial  f_{\pi_n}(X_s^{(\pi_n)})(\Phi_s^{(\pi_n)}u)-\overline{ \partial f}(X^\hatc_{s\wedge \cdot})(\mathbf{1}_{[s,T]}(\Phi_su)\right|^2_{U^*}ds
 \right] =0.
$$
This provides \eqref{eq:2016-01-28:01}.

\smallskip
\emph{\underline{Step 4.}} We show that
\begin{equation}
  \label{eq:2016-01-28:05}
  \lim_{n\rightarrow \infty}\operatorname{Tr} \left( ( \Phi_\#^{(\pi_n)} ) ^* \partial ^2f_{\pi_n}(X^{(\pi_n)}_\#)\Phi^{(\pi_n)}_\#  \right)=\mathbf{T}
\left[
\overline { \partial ^2f}(X^\hatc_{\#\wedge \cdot}),\mathbf{1}_{[\#,T]}\Phi_{\#}
\right]\mbox{ in }L^1_\PT(\mathbb{R}).
\end{equation}
By
\begin{multline*}
\left|  \partial ^2f_{\pi_n} ( X^{(\pi_n)}_s ). (  \Phi_s^{(\pi_n)}e'_m  , \Phi_s^{(\pi_n)}e'_m   )\right|+
     \left|\overline { \partial ^2f}(X^\hatc_{s\wedge \cdot}).(\mathbf{1}_{[s,T]}\Phi_se'_m,\mathbf{1}_{[s,T]}\Phi_se'_m)\right|
\\ 
\leq
M_2
\left(|\ell_{\pi_n} ( \Phi^{(\pi_n)}_se'_m ) |^2_\infty+
|\mathbf{1}_{[s,T]}\Phi_se'_m|^2_\infty\right)
=
2M_2
|\Phi_se'_m|^2_H,
\end{multline*}
and 
$$
\sum_{m\in \mathcal{M}}\mathbb{E} \left[ \int_0^T |\Phi_se'_m|_H^2ds \right]=
\mathbb{E} \left[ \int_0^T |\Phi_s|_{L_2(U,H)}^2ds \right] <\infty,
$$
we can apply Lebesgue's  dominated convergence theorem and
obtain
\begin{equation*}
  \begin{split}
      \lim_{n\rightarrow \infty}\mathbb{E} &\left[ \int_0^T
\left|\operatorname{Tr} \left( ( \Phi^{(\pi_n)}_s ) ^* \partial ^2f_{\pi_n}(X_s^{(\pi_n)})\Phi^{(\pi_n)}_s  \right)-\mathbf{T}
\left[
\overline { \partial ^2f}(X_{s\wedge \cdot}),\mathbf{1}_{[s,T]}\Phi_s
\right]
\right|
ds \right]\\
&\leq
      \lim_{n\rightarrow \infty}
\sum_{m\in \mathcal{M}}
\mathbb{E} \left[ \int_0^T
\left|  \partial ^2f_{\pi_n} ( X^{(\pi_n)}_s ).(  \Phi_s^{(\pi_n)}e'_m  , \Phi_s^{(\pi_n)}e'_m   )\right.\right.\\
&\hskip150pt \left.\left.-\overline { \partial ^2
f}(X^\hatc_{s\wedge \cdot}).(\mathbf{1}_{[s,T]}\Phi_se'_m,\mathbf{1}_{[s,T]}\Phi_se'_m)\right|ds
 \right] \\
&
=
\sum_{m\in \mathcal{M}}
\mathbb{E} \left[ \int_0^T
      \lim_{n\rightarrow \infty}
\left|  \partial ^2f_{\pi_n} ( X^{(\pi_n)}_s ). (  \Phi_s^{(\pi_n)}e'_m  , \Phi_s^{(\pi_n)}e'_m   )\right.\right.\\
&\hskip150pt\left.\left.-\overline {
 \partial ^2 f}(X^\hatc_{s\wedge \cdot}).(\mathbf{1}_{[s,T]}\Phi_se'_m,\mathbf{1}_{[s,T]}\Phi_se'_m)\right|ds
 \right] \\
&=0
\end{split}
\end{equation*}
where the pointwise convergence of the latter integrand comes from the sequential continuity of $\overline{ \partial ^2
f}$,
 from \eqref{eq:2015-08-29:00},
and  from
$$
\lim_{n\rightarrow \infty}\ell_{\pi_n}(\Phi_s^{(\pi_n)}(\omega)e'_m)=\mathbf{1}_{[s,T]} \Phi_s(\omega)e'_m  \ \mbox{in }\Sones{H}, \ \forall (\omega,s)\in\Omega_T,\ \forall m\in \mathcal{M}
$$
(that comes from \eqref{2015-08-30:22-b} with $u_n=u=e'_m$ for all $n$).

\smallskip
\emph{\underline{Step 5.}} We can now conclude the proof of the theorem, because
\eqref{eq:2015-08-29:04} is obtained by passing to the limit $n\rightarrow \infty$  in
\eqref{2015-08-30:18} (with $\pi$ replaced by $\pi_n$),
and by considering the partial results of  Step 1, {Step 2}, {Step 3}, {Step 4}.
\end{proof}

We can now prove Theorem~\ref{2017-05-30:02}.

\begin{proof}[\textbf{Proof of Theorem~\ref{2017-05-30:02}}]
\emph{(\ref{2017-05-31:02})}
By continuity of $u$, for $h\in(0,T)$, both $\{u(t,X_{(t-h)\wedge \cdot}^\hatc)\}_{t\in[h,T]}$ and 
$\{u(t-h,X^\hatc_{(t-h)\wedge \cdot})\}_{t\in[h,T]}$ are pathwise continuous and $\mathbb{F}$-adapted, hence predictable.
In particular, $\mathcal{D}^-_tu(\cdot,X^{\hatc})$ is predictable on $(0,T)$ and then
$\mathcal{D}^-_tu(\cdot,X^{\hatc}(\omega))$ is measurable for all $\omega\in\Omega$.
Moreover, for $\omega\in\Omega$, 
the map $[0,T]\rightarrow \mathbb{W},\ t \mapsto  X^\hatc_{t\wedge \cdot}(\omega)$ is continuous,
hence $\{X^\hatc_{t\wedge \cdot}(\omega)\}_{t\in[0,T]}$ is compact in $ \mathbb{W}$
 and \eqref{2017-05-31:00} implies $\mathcal{D}^-_tu(\cdot,X^\hatc(\omega))\in L^1((0,T),\mathbb{R})$.

\emph{(\ref{2017-05-31:03})}+
\emph{(\ref{2017-05-31:04})}+
\emph{(\ref{2017-05-31:05})}
For $n\geq 1$, let $t^n_k\coloneqq kT/n$,
for 
$k=0,\ldots,n$.
By applying
Lemma~\ref{2015-08-30:05}
to
$
\overline{ \partial _\mathbb{W}u}(t^n_k,\cdot)
$, 
 for $k=1,\ldots,n$,
we obtain the predictability of the process
\begin{gather*}
  \left\{\overline{
 \partial_\mathbb{W} u}(t^n_k,X^\hatc_{t\wedge \cdot}).(\mathbf{1}_{[t,T]}b_t)\right\}_{t\in[0,T]}\in L^1_\PT(\mathbb{R})
\qquad
\forall k=1,\ldots,n.
\end{gather*}
By
Assumption~\ref{2017-05-30:11}%
\emph{(\ref{2017-05-31:07})}, for all $t\in (0,T]$ and all $\omega\in\Omega$,
\begin{equation*}
  \overline{
 \partial_\mathbb{W} u}(t,X^\hatc(\omega)).(\mathbf{1}_{[t,T]}b_t(\omega))=
\lim_{n\rightarrow \infty}
\sum_{k=1}^n
\mathbf{1}_{(t^n_{k-1},t^n_k]}(t)
\overline{
 \partial_\mathbb{W} u}(t^n_k,X^\hatc_{t\wedge \cdot}(\omega)).(\mathbf{1}_{[t,T]}b_t(\omega)),
\end{equation*}
which shows that 
$  \left\{\overline{
 \partial_\mathbb{W} u}(t,X^\hatc).(\mathbf{1}_{[t,T]}b_t)\right\}_{t\in[0,T]}$
is predictable.

In  the same way, by applying
Lemma~\ref{2015-08-30:05}
and Pettis's measurability theorem,
 we see that the $U^*$-valued process
$    \left\{ \overline{\partial_\mathbb{W} u}
(t,X^\hatc).( \mathbf{1}_{[t,T]}\Phi_t)\right\}_{t\in[0,T]} $ is predictable.

We now address 
$\left\{\mathbf{T}\left[\overline { \partial_\mathbb{W} ^2u}(t,X^\hatc),\mathbf{1}_{[t,T]}\Phi_t\right]
\right\}_{t\in[0,T]}$.
Again by Lemma~\ref{2015-08-30:05},
 the process
\begin{equation*}
   \left\{ \overline{ \partial_\mathbb{W} ^2u}(t^n_k,X^\hatc_{t\wedge \cdot}).(\mathbf{1}_{[t,T]}\Phi_te'_m,\mathbf{1}_{[t,T]}\Phi_te'_m) \right\}_{t\in[0,T]}
 \end{equation*}
is predictable, for all $m\in \mathcal{M}$.
Thanks to 
Assumption~\ref{2017-05-30:11}\emph{(\ref{2017-05-31:07})}, we have, for all $t\in(0,T]$ and $\omega\in\Omega$,
\begin{multline*}
\overline{ \partial_\mathbb{W} ^2u}(t,X^\hatc_{t\wedge \cdot}).(\mathbf{1}_{[t,T]}\Phi_te'_m,\mathbf{1}_{[t,T]}\Phi_te'_m) \\
=
\lim_{n\rightarrow \infty}
\sum_{k=1}^n
\mathbf{1}_{(t^n_{k-1},t^n_k]}(t)
 \overline{ \partial_\mathbb{W} ^2u}(t^n_k,X^\hatc_{t\wedge \cdot}).(\mathbf{1}_{[t,T]}\Phi_te'_m,\mathbf{1}_{[t,T]}\Phi_te'_m). 
\end{multline*}
Then 
$
 \left\{\overline{ \partial_\mathbb{W} ^2u}(t,X^\hatc).(\mathbf{1}_{[t,T]}\Phi_te'_m,\mathbf{1}_{[t,T]}\Phi_te'_m)  \right\}_{t\in[0,T]} 
$
is predictable, hence
$$
\left\{\mathbf{T}\left[\overline { \partial_\mathbb{W} ^2u}(t,X^\hatc),\mathbf{1}_{[t,T]}\Phi_t\right]
\right\}_{t\in[0,T]}
$$
is predictable too.

Finally, the integrability properties claimed in 
\emph{(\ref{2017-05-31:03})},\emph{(\ref{2017-05-31:04})},\emph{(\ref{2017-05-31:05})}
are  proved exactly as for
Proposition~\ref{2016-01-25:01}\emph{(\ref{2015-08-30:04})},\emph{(\ref{2015-08-30:14})},\emph{(\ref{2015-08-30:15})}
by using 
Assumption~\ref{2017-05-30:11}\emph{(\ref{2017-05-31:06})}.

\smallskip
We now prove formula \eqref{2017-05-31:09}.
Considering Remark~\ref{2017-06-10:02},
without loss of generality we can assume $t=T$.
Let $n\geq 1$ and let  $\hat t=t^n_0<\ldots< t^n_n=T$ be a partition of $[\hat t,T]$, with $t_k^n-t^n_{k-1}= (T-\hat t)/n$, for $k=1,\ldots,n$.
We first write
\begin{equation}\label{2017-05-31:15}
  \begin{split}
    u(T,X^\hatc)&-u(\hat t,\hat Y)
    =
    \sum_{k=1}^n
     \left( u(t^n_k,X^\hatc)-
    u(t^n_{k-1},X^\hatc)\right) \\
&=
    \sum_{k=1}^n
     \left( 
       u(t^n_k,X^\hatc)-
    u(t^n_k,X^\hatc_{t^n_{k-1}\wedge \cdot})
  \right) +
  \sum_{k=1}^n
     \left( 
       u(t^n_k,X^\hatc_{t^n_{k-1}\wedge \cdot})-
    u(t^n_{k-1},X^\hatc)
  \right) \\
  &\eqqcolon\mathbf{I}_n+\mathbf{II}_n.
  \end{split} 
\end{equation}
For $k=1,\ldots,n$, 
due to our assumptions on $u$,
we can apply
Proposition~\ref{2016-01-25:01} to  $u(t^n_k,\cdot)$,
then \eqref{eq:2015-08-29:04} gives
\begin{equation*}
  \begin{split}
      u(t^n_k,X^\hatc)=
&u(t^n_k,\hat Y_{\hat t\wedge \cdot})+\int_{\hat t}^{t^n_k} \left(
\overline{ \partial_\mathbb{W} u}(t^n_k,X^\hatc _{s\wedge \cdot}).(\mathbf{1}_{[s,T]}b_s)+
\frac{1}{2}\mathbf{T}
\left[ \overline{\partial ^2_\mathbb{W} u}(t^n_k,X^\hatc_{s\wedge \cdot}),\mathbf{1}_{[s,T]}\Phi_s\right]
 \right) ds\\
&+
\int_{\hat t}^{t^n_k} \overline{
 \partial_\mathbb{W} u}(t^n_k,X^\hatc_{s\wedge \cdot}).( \mathbf{1}_{[s,T]}\Phi_s) dW_s\\
=&u(t^n_k,X^\hatc_{t^n_{k-1}\wedge \cdot}
)+\int_{t^n_{k-1}}^{t^n_k} \left(
\overline{ \partial_\mathbb{W} u}(t^n_k,X^\hatc _{s\wedge \cdot}).(\mathbf{1}_{[s,T]}b_s)+
\frac{1}{2}\mathbf{T}
\left[ \overline{\partial ^2_\mathbb{W} u}(t^n_k,X^\hatc_{s\wedge \cdot}),\mathbf{1}_{[s,T]}\Phi_s\right]
 \right) ds\\
&+
\int_{t^n_{k-1}}^{t^n_k} \overline{
 \partial_\mathbb{W} u}(t^n_k,X^\hatc_{s\wedge \cdot}).( \mathbf{1}_{[s,T]}\Phi_s) dW_s,\qquad\qquad\qquad\qquad\qquad
\mbox{$\mathbb{P}$-a.e..}
\end{split}
\end{equation*}
Then
\begin{equation*}
  \begin{split}
    \mathbf{I}_n=&\int_{\hat t}^T
\sum_{k=1}^n
\mathbf{1}_{(t^n_{k-1},t^n_k]}(s)\left(
\overline{ \partial_\mathbb{W} u}(t^n_k,X^\hatc _{s\wedge \cdot}).(\mathbf{1}_{[s,T]}b_s)+
\frac{1}{2}\mathbf{T}
\left[ \overline{\partial ^2_\mathbb{W} u}(t^n_k,X^\hatc_{s\wedge \cdot}),\mathbf{1}_{[s,T]}\Phi_s\right]
 \right) ds\\
&+
\int_{\hat t}^T
\sum_{k=1}^n
\mathbf{1}_{(t^n_{k-1},t^n_k]}(s)
 \overline{
 \partial_\mathbb{W} u}(t^n_k,X^\hatc_{s\wedge \cdot}).( \mathbf{1}_{[s,T]}\Phi_s) dW_s,\qquad\qquad\qquad\qquad\qquad
\mbox{$\mathbb{P}$-a.e..}
  \end{split}
\end{equation*}
By 
Assumption~\ref{2017-05-30:11}\emph{(\ref{2017-05-31:06})},\emph{(\ref{2017-05-31:07})},
we can apply Lebesgue's dominated convergence theorem 
 (the integrands are estimated similarly as done in Steps~2--4 of the proof of Proposition~\ref{2016-01-25:01})
and obtain
\begin{multline*}
\lim_{n\rightarrow \infty}    
\sum_{k=1}^n
\mathbf{1}_{(t^n_{k-1},t^n_k]}(\#)\left(
\overline{ \partial_\mathbb{W} u}(t^n_k,X^\hatc _{\#\wedge \cdot}).(\mathbf{1}_{[\#,T]}b_\#)+
\frac{1}{2}\mathbf{T}
\left[ \overline{\partial ^2_\mathbb{W} u}(t^n_k,X^\hatc_{\#\wedge \cdot}),\mathbf{1}_{[\#,T]}\Phi_\#\right]
 \right) \\
=
\overline{ \partial_\mathbb{W} u}(\#,X^\hatc _{\#\wedge \cdot}).(\mathbf{1}_{[\#,T]}b_\#)+
\frac{1}{2}\mathbf{T}
\left[ \overline{\partial ^2_\mathbb{W} u}(\#,X^\hatc_{\#\wedge \cdot}),\mathbf{1}_{[\#,T]}\Phi_\#\right]
\mbox{\ in\ }L^1_{\mathcal{P}_T}(\mathbb{R})
\end{multline*}
and
\begin{equation*}
\lim_{n\rightarrow \infty}    \sum_{k=1}^n
\mathbf{1}_{(t^n_{k-1},t^n_k]}(\#)
 \overline{
 \partial_\mathbb{W} u}(t^n_k,X^\hatc_{\#\wedge \cdot}).( \mathbf{1}_{[\#,T]}\Phi_\#)=
 \overline{
 \partial_\mathbb{W} u}(\#,X^\hatc_{\#\wedge \cdot}).( \mathbf{1}_{[\#,T]}\Phi_\#)
\mbox{\ in\ }L^2_{\mathcal{P}_T}(U^*).
\end{equation*}
The two limits above permit to 
obtain the following limit in 
$L^1(\Omega,\mathbb{R})$:
\begin{equation}\label{2017-05-31:16}
  \begin{split}
\lim_{n\rightarrow \infty}    \mathbf{I}_n=&\int_{\hat t}^T
\left(
\overline{ \partial_\mathbb{W} u}(s,X^\hatc _{s\wedge \cdot}).(\mathbf{1}_{[s,T]}b_s)+
\frac{1}{2}\mathbf{T}
\left[ \overline{\partial ^2_\mathbb{W} u}(s,X^\hatc_{s\wedge \cdot}),\mathbf{1}_{[s,T]}\Phi_s\right]
 \right) ds\\
&\qquad \qquad\qquad \qquad\qquad\qquad+
\int_{\hat t}^T
 \overline{
 \partial_\mathbb{W} u}(s,X^\hatc_{s\wedge \cdot}).( \mathbf{1}_{[s,T]}\Phi_s) dW_s.
\end{split}
\end{equation}

We now address the term $\mathbf{II}_n$.
By Assumption~\ref{2017-05-30:11}\emph{(\ref{2017-05-31:10})},
continuity of $u$,
and 
recalling Remark~\ref{2017-06-01:01},
 we can apply 
\cite[(1.4.4), p.\ 23]{Flett1980} 
and conclude that
$(t,T)\rightarrow \mathbb{R},\ t \mapsto  u(s,\mathbf{x}_{t\wedge \cdot})$  is Lipschitz.
We can then write
\begin{equation}\label{2017-05-31:13}
  \begin{split}
        \mathbf{II}_n&=
    \sum_{k=1}^n
    \int_{t^n_{k-1}}^{t^n_k}\frac{d}{ds}u(s,X^\hatc_{t^n_{k-1}\wedge \cdot})ds=
    \sum_{k=1}^n
    \int_{t^n_{k-1}}^{t^n_k}\mathcal{D}^-_tu(s,X^\hatc_{t^n_{k-1}\wedge \cdot})ds\\
&=
    \int_{\hat t}^T
 \left(     \sum_{k=1}^n
\mathbf{1}_{(t^n_{k-1},t^n_k]}(s)
\mathcal{D}^-_tu(s,
X^\hatc_{t^n_{k-1}\wedge \cdot})
 \right) 
ds.
\end{split}
\end{equation}
Fix $\omega\in\Omega$.
As noticed at the beginning of the proof, the set $K\coloneqq \{X^\hatc_{t\wedge \cdot}(\omega)\}_{t\in[0,T]}$ is compact in $\mathbb{W}$.
Then, by Assumption~\ref{2017-05-30:11}\emph{(\ref{2017-05-31:10})}, there exists $M_K>0$ (depending on $\omega$, since our compact set  $K$ depends on $\omega$ too) such that
\begin{equation}\label{2017-05-31:11}
\Big|  \sum_{k=1}^n
\mathbf{1}_{(t^n_{k-1},t^n_k]}(s)
\mathcal{D}^-_tu(s,
X^\hatc_{t^n_{k-1}\wedge \cdot}(\omega))\Big|_H\leq M_K
\ \mbox{for a.e.\ }s\in(0,T).
\end{equation}
For fixed $s\in (0,T)$, let  $\{k_n\}_{n\in \mathbb{N}}$
be the sequence such that 
 $s\in (t^n_{k_n-1},t^n_{k_n}]$ for all $n\in \mathbb{N}$, $n\geq 1$.
Then
$X^\hatc_{t^n_{k_n-1}\wedge \cdot}(\omega)\rightarrow X^\hatc_{s\wedge \cdot}(\omega)$
in $\mathbb{W}$ as $n\rightarrow \infty$.
Since this holds for all $s\in(0,T)$ and since
 $\mathbb{W}\rightarrow \mathbb{R},\ \mathbf{x} \mapsto \mathcal{D}^-_tu(s,\mathbf{x})$, is continuous
 for a.e.\ $s\in (0,T)$
 because of Assumption~\ref{2017-05-30:11}\emph{(\ref{2017-05-31:10})}, we have
\begin{equation}
  \label{eq:2017-05-31:12}
  \lim_{n\rightarrow \infty}
 \sum_{k=1}^n
\mathbf{1}_{(t^n_{k-1},t^n_k]}(s)
\mathcal{D}^-_tu(s,
X^\hatc_{t^n_{k-1}\wedge \cdot}(\omega))
=
\mathcal{D}^-_tu(s,X^\hatc_{s\wedge \cdot}(\omega))\ \mbox{for a.e.\ }s\in (0,T).
\end{equation}
By
\eqref{2017-05-31:11}
and
\eqref{eq:2017-05-31:12}, we can apply Lebesgue's dominated convergence theorem to
\eqref{2017-05-31:13} evaluated in $\omega$
and obtain
\begin{equation*}
  \lim_{n\rightarrow \infty}\mathbf{II}_n(\omega)
  =
  \int_{\hat t}^T\mathcal{D}^-_tu(s,X^\hatc_{s\wedge \cdot}(\omega))ds.
\end{equation*}
Since $\omega\in \Omega$  was arbitrary, we have
\begin{equation}
  \label{2017-05-31:14}
  \lim_{n\rightarrow \infty}\mathbf{II}_n
  =
  \int_{\hat t}^T\mathcal{D}^-_tu(s,X^\hatc_{s\wedge \cdot})ds\ \mbox{pointwise\ on $\Omega$}.
\end{equation}
This concludes the proof, because, by 
passing to the limit $n\rightarrow \infty$ in
\eqref{2017-05-31:15} and
 considering  \eqref{2017-05-31:16} and \eqref{2017-05-31:14}, we  obtain
 \eqref{2017-05-31:09} with $t=T$.
\end{proof}

\section{Application to path-dependent PDEs}
\label{2017-05-11:07}

In this section we use the path-dependent
It\=o's formula
 to relate
the solution of an $H$-valued path-dependent SDE with
a path-dependent Kolmogorov equation, 
similarly
as
in the
classical non-path-dependent case (see e.g.\ \cite[Ch.\ 7]{DaPrato2004}).
As a corollary, we will derive a Clark-Ocone type formula.

\medskip
The following assumption on $b,\Phi$ will be standing for the remaining of the present section.


\begin{assumption}\label{2017-06-09:05}
$b\in \CNAW{H}$, $\Phi\in\CNAW{L_2(U,H)}$,
and there exists $M>0$ such that
$$
\begin{dcases}
     |b(t,\mathbf{x})-b(t,\mathbf{x}')|_H \leq
   M|\mathbf{x}-\mathbf{x}'|_\infty\\
   |b(t,\mathbf{x})|_H \leq M(1+|\mathbf{x}|_\infty)
 \end{dcases}
\qquad
 \begin{dcases}
        |\Phi(t,\mathbf{x})-\Phi(t,\mathbf{x}')|_{L_2(U,H)} \leq
     M|\mathbf{x}-\mathbf{x}'|_\infty\\
     |\Phi(t,\mathbf{x})|_{L_2(U,H)} \leq M(1+|\mathbf{x}|_\infty)
   \end{dcases}
   $$
for all $t\in[0,T]$, $\mathbf{x},\mathbf{x}'\in \mathbb{W}$.
\end{assumption}

For  $p>2$,  $\hat Y\in \mathcal{L}^p_{\mathcal{P}_T}(\mathbb{W})$,
$ \hat t\in[0,T]$, we consider the following path-dependent SDE
\begin{equation}
  \label{eq:2016-02-09:00}
  \begin{dcases}
    dX_s=b(s,X)ds + \Phi(s,X)dW_s &\quad \forall s\in [\hat t,T]\\
    X_{ \hat t\wedge \cdot}= \hat Y_{ \hat t\wedge \cdot}. &
  \end{dcases}
\end{equation}
By a standard  contraction argument (see e.g.\ \cite[Ch.\ 3]{Gawarecki2011} and \cite[Theorem 3.6]{Cosso2018}),
there exists a unique strong solution $X^\hatc$ to \eqref{eq:2016-02-09:00} in $\mathcal{L}^p_{\mathcal{P}_T}(\mathbb{W})$, 
i.e.\ 
a unique process $X^\hatc\in \mathcal{L}^p_{\mathcal{P}_T}(\mathbb{W})$ such that, for all $t\in[0,T]$,
$$
X^\hatc_t=\hat  Y_{ \hat t\wedge t}+
\int_{\hat t}^{\hat t\vee t}
b(r,X^\hatc)dr
+\int_{\hat t}^{\hat t\vee t}
\Phi(r,X^\hatc)dW_r\qquad \mathbb{P}\mbox{-a.e..}
$$
Moreover, 
the map
\begin{equation}
  \label{eq:2016-02-09:02}
  [0,T]\times \mathcal{L}^p_{\mathcal{P}_T}(\mathbb{W})\rightarrow \mathcal{L}^p_{\mathcal{P}_T}(\mathbb{W}),\ (t, Y) \mapsto X^{ t,Y}
\end{equation}
is Lipschitz continuous with respect to $ Y$, uniformly 
for $ t\in[0,T]$, and jointly continuous in $(t,Y)$.
Uniqueness of solution yields the flow property
\begin{equation}
  \label{eq:2016-02-09:03}
  X^{t,\mathbf{x}}=X^{s,X^{t,\mathbf{x}}}\mbox{ in }\mathcal{L}^p_{\mathcal{P}_T}(\mathbb{W}),\ \forall (t,\mathbf{x})\in [0,T]\times \mathbb{W},\ \forall s\in [t,T].
\end{equation}

\medskip
Let $f\colon \mathbb{W}\rightarrow \mathbb{R}$ be a Lipschitz function.
Hereafter in this section, we denote by $\varphi$ the function
$$
\varphi\colon [0,T]\times \mathbb{W}\rightarrow \mathbb{R}
$$
defined  by
\begin{equation}
  \label{eq:2016-02-09:01}
\varphi(t,\mathbf{x})\coloneqq \mathbb{E} \left[ f(X^{t,\mathbf{x}}) \right] \qquad \forall (t,\mathbf{x})\in [0,T]\times \mathbb{W}.
\end{equation}
Due to the continuity properties of the map \eqref{eq:2016-02-09:02}, 
$\varphi(t,\mathbf{x})$ is Lipschitz 
continuous with respect to  $\mathbf{x}$, uniformly for $t\in[0,T]$, and jointly continuous in $(t,\mathbf{x})$.
It is
 clear that $\varphi(t,\mathbf{x})=\varphi(t,\mathbf{x}_{t\wedge \cdot})$.
Then $\varphi\in \CNAW{\mathbb{R}}$.
Since  $X^{t,\mathbf{x}}$ is independent of $\mathcal{F}_t$,  we can write,
by
\eqref{eq:2016-02-09:03} and  \cite[Lemma~3.9, \mbox{p.\ 55}]{Baldi2000},
\begin{equation}
  \label{eq:2016-02-09:04}
  \begin{split}
    \varphi(t',\mathbf{x})=&
\mathbb{E} \left[ f(X^{t',\mathbf{x}}) \right] 
=
\mathbb{E} \left[  f(X^{t,X_{t\wedge \cdot}^{t',\mathbf{x}}})
 \right] \\
=&
\mathbb{E} \left[ \mathbb{E} \left[ f(X^{t,X_{t\wedge \cdot}^{t',\mathbf{x}}})|\mathcal{F}_{t} \right]  \right] 
=
\mathbb{E} \left[ \varphi(t,X^{t',\mathbf{x}}_{t\wedge \cdot})  \right]
=
\mathbb{E} \left[ \varphi(t,X^{t',\mathbf{x}})  \right]\qquad \forall t\in[t',T].
\end{split}
\end{equation}

In what follows, we will show that,
in case $\varphi(t,\mathbf{x})$ is
sufficiently
regular with respect to the variable $\mathbf{x}$,
then
 Proposition~\ref{2016-01-25:01}
can be used
to conclude that $\mathcal{D}^-_t\varphi$ exists everywhere and that $\varphi$ solves a path-dependent backward Kolmogorov equation associated to SDE \eqref{eq:2016-02-09:00}.
We argue similarly as
 in \cite[Ch.\ 7]{DaPrato2004}, 
where,
differently than in our case,
 the setting is
 non-path-dependent.
The
two main tools of the
 argument   are
\eqref{eq:2016-02-09:04}
and
formula
\eqref{eq:2015-08-29:04}. 

In order to use 
formula \eqref{eq:2015-08-29:04}, we need to make some assumptions regarding existence and regularity of the spatial derivatives of $\varphi$.
In this section, we make such assumptions without any further investigation under which conditions they can be obtained.
We only guess that, at least in the Markovian case, 
i.e.\  when $b$ and $\Phi$ are not path-dependent, and the only path-dependence is due to $f$, the regularity assumptions on $\varphi(t,\cdot)$ should  come 
from continuity assumption on $ \partial f$ and $ \partial ^2f$ with respect to $\sigma^s$, and
 from regularity assumptions on the coeffiecients $b$ and $\Phi$, thanks to the results
in \cite[Ch.\ 7]{DaPrato2004}.
In the following section,  
we will prove that the regularity assumptions 
on the spatial derivatives of $\varphi$
are satisfied
for a particular class of dynamics $X$.

\bigskip
For a function $v(t,\mathbf{x})$,
defined for 
$(t,\mathbf{x})\in [0,T]\times \Bone{H}$,
the more concise notation 
$ \partial _{\mathbb{B}^1}v$
stands for
$ \partial _{\Bone{H}}v$, and
$ \partial^2 _{\mathbb{B}^1}v$ stands for $ \partial ^2_{\Bone{H}}v$.
For a function $v$ such that, for all $t\in (0,T)$,  $v(t,\cdot)$ 
satisfies Assumption~\ref{2015-08-28:03},
we define
$\mathcal{L}v$ by
\begin{equation*}
  \mathcal{L}v(t,\mathbf{x})\coloneqq 
\overline{ \partial _\mathbb{W} v}(t,\mathbf{x}).
(\mathbf{1}_{[t,T]}b(t,\mathbf{x}))
+
\frac{1}{2}\mathbf{T} \left[ \overline{\partial ^2_\mathbb{W}
          v}(t,\mathbf{x}
),\mathbf{1}_{[t,T]}\Phi(t,\mathbf{x})\right]\qquad \forall (t,\mathbf{x})\in (0,T)\times \mathbb{W}.
\end{equation*}


\medskip
\begin{theorem}\label{2016-02-10:10}
Let $\varphi$ be defined by
\eqref{eq:2016-02-09:01}.
If $\varphi$ satisfies
Assumption~\ref{2017-05-30:11}(\ref{2017-05-31:06}),
then $\varphi$
satisfies also
Assumption~\ref{2017-05-30:11}(\ref{2017-05-31:10}) and
\begin{equation}
  \label{eq:2016-02-09:20}
      \mathcal{D}_t^-\varphi(t,\mathbf{x})+
\mathcal{L}\varphi(t,\mathbf{x})
=0
 \qquad \forall (t,\mathbf{x})\in (0,T)\times \mathbb{W}.
    \end{equation}
\end{theorem}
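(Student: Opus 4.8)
The plan is to combine the tower-type identity \eqref{eq:2016-02-09:04} with the It\=o's formula of Proposition~\ref{2016-01-25:01} to obtain an exact expression for the incremental ratio defining $\mathcal{D}^-_t\varphi$, and then to let the increment vanish. Fix $(t,\mathbf{x})\in(0,T)\times\mathbb{W}$ and $h\in(0,t)$. Since $\varphi$ satisfies Assumption~\ref{2017-05-30:11}(\ref{2017-05-31:06}), the function $g\coloneqq\varphi(t,\cdot)$ satisfies Assumption~\ref{2015-08-28:03}. Applying Proposition~\ref{2016-01-25:01} to $g$ along the process $X^{t-h,\mathbf{x}}$ on $[t-h,t]$ (with coefficients $b_s=b(s,X^{t-h,\mathbf{x}})$, $\Phi_s=\Phi(s,X^{t-h,\mathbf{x}})$, which lie in the required $\mathcal{L}^1_{\mathcal{P}_T}$ and $\mathcal{L}^2_{\mathcal{P}_T}$ spaces thanks to Assumption~\ref{2017-06-09:05} and the moment bounds on $X^{t-h,\mathbf{x}}$), taking expectation to annihilate the stochastic integral, and using the non-anticipativity identity $g(X^{t-h,\mathbf{x}}_{t\wedge\cdot})=\varphi(t,X^{t-h,\mathbf{x}})$ together with $\varphi(t-h,\mathbf{x})=\mathbb{E}[\varphi(t,X^{t-h,\mathbf{x}})]$ from \eqref{eq:2016-02-09:04}, I obtain
\begin{equation*}
\frac{\varphi(t,\mathbf{x}_{(t-h)\wedge\cdot})-\varphi(t-h,\mathbf{x})}{h}=-\frac1h\,\mathbb{E}\left[\int_{t-h}^t G(s,X^{t-h,\mathbf{x}}_{s\wedge\cdot})\,ds\right],
\end{equation*}
where $G(s,\mathbf{y})\coloneqq\overline{\partial_\mathbb{W}\varphi}(t,\mathbf{y}).(\mathbf{1}_{[s,T]}b(s,\mathbf{y}))+\tfrac12\mathbf{T}[\overline{\partial^2_\mathbb{W}\varphi}(t,\mathbf{y}),\mathbf{1}_{[s,T]}\Phi(s,\mathbf{y})]$ carries the spatial derivatives frozen at the first time argument $t$, so that $G(t,\mathbf{x})=\mathcal{L}\varphi(t,\mathbf{x})$.

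It then suffices to show that the right-hand side converges to $-\mathcal{L}\varphi(t,\mathbf{x})$ as $h\to0^+$. By Fubini it equals $-\frac1h\int_{t-h}^t\mathbb{E}[G(s,X^{t-h,\mathbf{x}}_{s\wedge\cdot})]\,ds$, an average over $[t-h,t]$, so it is enough to prove that $\mathbb{E}[G(s,X^{t-h,\mathbf{x}}_{s\wedge\cdot})]\to G(t,\mathbf{x})=\mathcal{L}\varphi(t,\mathbf{x})$ uniformly for $s\in[t-h,t]$ as $h\to0^+$. Granting this, the incremental ratio converges, which establishes simultaneously the existence of $\mathcal{D}^-_t\varphi(t,\mathbf{x})$ for every $(t,\mathbf{x})$ and the identity $\mathcal{D}^-_t\varphi=-\mathcal{L}\varphi$, that is \eqref{eq:2016-02-09:20}. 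The remaining requirements of Assumption~\ref{2017-05-30:11}(\ref{2017-05-31:10}) are then read off from $\mathcal{D}^-_t\varphi=-\mathcal{L}\varphi$: continuity of $\mathbf{x}\mapsto\mathcal{L}\varphi(t,\mathbf{x})$ follows from the $\sigma^s$-sequential continuity of $\overline{\partial_\mathbb{W}\varphi}(t,\cdot)$, $\overline{\partial^2_\mathbb{W}\varphi}(t,\cdot)$ and the continuity of $b(t,\cdot),\Phi(t,\cdot)$, while the local bound \eqref{2017-05-31:00} follows from the bounds $M_1,M_2$ on the derivatives combined with the linear growth in Assumption~\ref{2017-06-09:05}, giving $\sup_{\mathbf{x}\in K}|\mathcal{L}\varphi(t,\mathbf{x})|\le M_1M(1+R)+\tfrac12 M_2M^2(1+R)^2$ with $R\coloneqq\sup_{\mathbf{x}\in K}|\mathbf{x}|_\infty$, uniformly in $t$.

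The core of the argument, and its main difficulty, is the convergence $\mathbb{E}[G(s,X^{t-h,\mathbf{x}}_{s\wedge\cdot})]\to G(t,\mathbf{x})$, which I read as: along \emph{any} sequences $h_n\to0^+$, $s_n\in[t-h_n,t]$, one has $\mathbb{E}[G(s_n,X^{t-h_n,\mathbf{x}}_{s_n\wedge\cdot})]\to G(t,\mathbf{x})$. Two convergences feed into it. First, standard SDE moment estimates (linear growth and Burkholder-Davis-Gundy) give $\mathbb{E}[\sup_{s\in[t-h,t]}|X^{t-h,\mathbf{x}}_{s\wedge\cdot}-\mathbf{x}_{t\wedge\cdot}|_\infty]\to0$, so along such a sequence one extracts a subsequence with $X^{t-h_n,\mathbf{x}}_{s_n\wedge\cdot}\to\mathbf{x}$ in $\mathbb{W}$ almost surely. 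Second, as $s\uparrow t$ one has $\mathbf{1}_{[s,T]}\to\mathbf{1}_{[t,T]}$ pointwise everywhere on $[0,T]$, and $b(s,\mathbf{y})\to b(t,\mathbf{x})$, $\Phi(s,\mathbf{y})\to\Phi(t,\mathbf{x})$ by continuity, whence $\mathbf{1}_{[s,T]}b(s,\mathbf{y})\to\mathbf{1}_{[t,T]}b(t,\mathbf{x})$ and $\mathbf{1}_{[s,T]}\Phi(s,\mathbf{y})e'_m\to\mathbf{1}_{[t,T]}\Phi(t,\mathbf{x})e'_m$ in $\Sones{H}$ by Proposition~\ref{2015-08-24:00}(\ref{2016-01-25:08}). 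The $\sigma^s$-sequential continuity of $\overline{\partial_\mathbb{W}\varphi}(t,\cdot)$ and $\overline{\partial^2_\mathbb{W}\varphi}(t,\cdot)$ then yields $G(s_n,X^{t-h_n,\mathbf{x}}_{s_n\wedge\cdot})\to G(t,\mathbf{x})$ almost surely, where for the $\mathbf{T}[\cdot]$-term the interchange of the limit with the sum over $m$ is justified exactly as in Step~4 of the proof of Proposition~\ref{2016-01-25:01}, the summand being dominated by $M_2|\Phi(s,\mathbf{y})e'_m|_H^2$ with total mass $M_2|\Phi(s,\mathbf{y})|^2_{L_2(U,H)}$. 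Finally, the uniform bound $|G(s,\mathbf{y})|\le M_1|b(s,\mathbf{y})|_H+\tfrac12 M_2|\Phi(s,\mathbf{y})|^2_{L_2(U,H)}\le C(1+|\mathbf{y}|_\infty)^2$, together with the uniform integrability of $\sup_s|X^{t-h,\mathbf{x}}_{s\wedge\cdot}|_\infty^2$ (here the assumption $p>2$ is used), permits passing to the limit under the expectation by dominated convergence. Since the limit $G(t,\mathbf{x})$ is the same along every subsequence, the asserted convergence — hence the uniformity in $s$ used above — follows, completing the proof.
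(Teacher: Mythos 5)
Your proposal is correct and follows essentially the same route as the paper: apply Proposition~\ref{2016-01-25:01} to the spatial section $\varphi(t,\cdot)$ along $X^{t-h,\mathbf{x}}$, kill the stochastic integral by taking expectations, invoke the tower identity \eqref{eq:2016-02-09:04} to turn the incremental quotient of $\mathcal{D}^-_t\varphi$ into an average of expectations of the generator term, and pass to the limit $h\to 0^+$ using Proposition~\ref{2015-08-24:00}, the $\Sones{H}$-sequential continuity of the derivatives, and dominated convergence, finally reading off continuity and the compact bound of $\mathcal{D}^-_t\varphi=-\mathcal{L}\varphi$ from the assumptions on $b,\Phi$ and the derivative bounds. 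The only deviation is technical and harmless: where the paper asserts pointwise-on-$\Omega$ convergence of $X^{t',\mathbf{x}}_{s\wedge\cdot}$ to $\mathbf{x}_{t\wedge\cdot}$, you instead extract a.s.\ convergent subsequences from the $L^1$ convergence and use uniform integrability (via $p>2$), which is if anything a more careful justification of the same limit.
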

\begin{proof}
    Let $t',t\in(0,T)$, $t'<t$, $\mathbf{x}\in \mathbb{W}$.
By assumption on the spatial derivatives of $\varphi(t,\cdot)$,
 we can apply Proposition~\ref{2016-01-25:01} to $\varphi(t,X^{t',\mathbf{x}}_{t\wedge \cdot})$, and obtain
\begin{equation}
  \label{2016-02-09:05}
  \begin{multlined}[c][.85\displaywidth]
   \mathbb{E} \left[  \varphi(t,X_{t\wedge \cdot}^{t',\mathbf{x}}) \right] = \varphi(t,\mathbf{x}_{t'\wedge \cdot})
    +\int_{t'}^{t} \mathbb{E} \left[       \overline{ \partial_\mathbb{W} \varphi}(t,X^{t',\mathbf{x}}_{s\wedge \cdot}).(\mathbf{1}_{[s,T]}b(s,X^{t',\mathbf{x}})) \right] 
ds\\
+
\frac{1}{2} \int_{t'}^{t}
\mathbb{E} \left[ 
\mathbf{T} \left[ \overline{\partial ^2_\mathbb{W}
          \varphi}(t,X^{t',\mathbf{x}}_{s\wedge
          \cdot}),\mathbf{1}_{[s,T]}\Phi(s,X^{t',\mathbf{x}})\right] \right]
     ds.
   \end{multlined}
\end{equation}
By non-anticipativity, $\varphi(t,X^{t',\mathbf{x}})=\varphi(t,X^{t',\mathbf{x}}_{t\wedge \cdot})$.
Then, by
 \eqref{eq:2016-02-09:04} and
\eqref{2016-02-09:05}, we have
\begin{multline*}
    \varphi(t,\mathbf{x}_{t'\wedge \cdot})-\varphi(t',\mathbf{x})=
-\int_{t'}^{t} \mathbb{E} \left[       \overline{ \partial_{\mathbb{W}} \varphi}(t,X^{t',\mathbf{x}}_{s\wedge \cdot}).(\mathbf{1}_{[s,T]}b(s,X^{t',\mathbf{x}})) \right] 
ds\\
- \frac{1}{2}\int_{t'}^{t}
\mathbb{E} \left[ 
\mathbf{T} \left[ \overline{\partial ^2_\mathbb{W}
          \varphi}(t,X^{t',\mathbf{x}}_{s\wedge
          \cdot}),\mathbf{1}_{[s,T]}\Phi(s,X^{t',\mathbf{x}})\right] \right]
     ds.
\end{multline*}
By  continuity of \eqref{eq:2016-02-09:02},
\begin{equation}
  \label{eq:2016-02-10:05}
  \lim_{t'\rightarrow t^-}\sup_{s\in[t',t]}
|
X^{t',\mathbf{x}}_{s\wedge \cdot}-
\mathbf{x}_{t\wedge \cdot}|_H=0\ \mbox{on\ }\Omega.
\end{equation}
By non-anticipativity and continuity of $b$ and $\Phi$, we then obtain, on $\Omega$,
  \begin{gather*}
      \lim_{t'\rightarrow t^-}
\sup_{s\in[t',t]}|b(s,X^{t',\mathbf{x}})-b(t,\mathbf{x})|_H
=0\\
\lim_{t'\rightarrow t^-}
\sup_{s\in[t',t]}|\Phi(s,X^{t',\mathbf{x}})-\Phi(t,\mathbf{x})|_{L_2(U,H)}
=0.
\end{gather*}
Then, by Proposition~\ref{2015-08-24:00}\emph{(\ref{2016-01-25:08})},
for any sequence $\{(t'_n,s_n)\}_{n\in \mathbb{N}}$ with $t'_n\leq s_n\leq t$ and $t'_n\rightarrow t$, 
 we have
\begin{equation}
  \label{eq:2016-02-10:03}
  \begin{dcases}
    \lim_{n\rightarrow \infty}\mathbf{1}_{[s_n,T]}b(s_n,X^{t'_n,\mathbf{x}})=\mathbf{1}_{[t,T]}b(t,\mathbf{x})\ \mbox{in}\ \Sones{H}\\
    \lim_{n\rightarrow \infty}\mathbf{1}_{[s_n,T]}\Phi(s_n,X^{t'_n,\mathbf{x}})=\mathbf{1}_{[t,T]}\Phi(t,\mathbf{x})\ \mbox{in}\ \Sones{L_2(U,H)}.
  \end{dcases}
\end{equation}
By assumption,  $\overline{\partial_\mathbb{W}
\varphi}(t,\mathbf{x}).\mathbf{v} $ and $\overline { \partial ^2_\mathbb{W}
\varphi}(t,\mathbf{x}).(\mathbf{v},\mathbf{v})$ are uniformly bounded for $\mathbf{x}\in \mathbb{W}$ and $\mathbf{v}\in \Sone{H}$, $|\mathbf{v}|_\infty\leq 1$, and  sequentially continuous in $(\mathbf{x},\mathbf{v})\in \mathbb{W}\times \Sones{H}$.
Then, by 
\eqref{eq:2016-02-10:05}, 
\eqref{eq:2016-02-10:03}, and  Lebesgue's dominated convergence theorem,
we have
\begin{equation}
  \label{eq:2016-02-10:06}
  \lim_{t'\rightarrow t^-}\sup_{s\in[t',t]}
 \mathbb{E} \left[  \left|     \overline{ \partial_\mathbb{W}
 \varphi}(t,X^{t',\mathbf{x}}_{s\wedge \cdot}).(\mathbf{1}_{[s,T]}b(s,X^{t',\mathbf{x}}))-
\overline{\partial _\mathbb{W} \varphi}(t,\mathbf{x}).(\mathbf{1}_{[t,T]}b(t,\mathbf{x}))\right|
 \right] =0
\end{equation}
and
\begin{equation}
  \label{eq:2016-02-10:07}
 \lim_{t'\rightarrow t^-}\sup_{s\in[t',t]}
\mathbb{E} \left[ 
\left|
\mathbf{T} \left[ \overline{\partial ^2_\mathbb{W}
          \varphi}(t,X^{t',\mathbf{x}}_{s\wedge
          \cdot}),\mathbf{1}_{[s,T]}\Phi(s,X^{t',\mathbf{x}})\right]
-
\mathbf{T} \left[ \overline{\partial ^2_\mathbb{W}
          \varphi}(t,\mathbf{x}),\mathbf{1}_{[t,T]}\Phi(t,\mathbf{x})\right]
\right|
 \right]=0
 \end{equation}
Thanks to \eqref{eq:2016-02-10:06} and \eqref{eq:2016-02-10:07}, we can finally write
  \begin{equation*}
\lim_{t'\rightarrow t^-}
\frac{1}{t-t'}
\int_{t'}^{t} \mathbb{E} \left[       \overline{ \partial_\mathbb{W}
 \varphi}(t,X^{t',\mathbf{x}}_{s\wedge \cdot}).(\mathbf{1}_{[s,T]}b(s,X^{t',\mathbf{x}})) \right] 
ds =
\overline{\partial _\mathbb{W}\varphi}(t,\mathbf{x}).(\mathbf{1}_{[t,T]}b(t,\mathbf{x}))
\end{equation*}
and 
\begin{equation*}
  \lim_{t'\rightarrow t^-}
\frac{1}{t-t'}
\int_{t'}^{t}
\mathbb{E} 
\left[ 
\mathbf{T} \left[ \overline{\partial ^2_\mathbb{W}
          \varphi}(t,X^{t',\mathbf{x}}_{s\wedge
          \cdot}),\mathbf{1}_{[s,T]}\Phi(s,X^{t',\mathbf{x}})\right] \right]
     ds
=
\mathbf{T} 
\left[ \overline{\partial ^2_\mathbb{W}
          \varphi}(t,\mathbf{x}),\mathbf{1}_{[t,T]}\Phi(t,\mathbf{x})\right]
\end{equation*}
This proves that $\mathcal{D}^-_t\varphi(t,\mathbf{x})$ exists
and that \eqref{eq:2016-02-09:20} holds true.

We now show that
$\mathcal{D}^-_t\varphi(t,\mathbf{x})$ is continuous in $\mathbf{x}$ and that
$$
\sup_{\substack{t\in (0,T)\\\mathbf{x}\in K}}|\mathcal{D}^-_t\varphi(t,\mathbf{x})|<\infty,
$$
for all compact sets $K\subset \mathbb{W}$.
By \eqref{eq:2016-02-09:20}, it is sufficient to show that $$
\mathbb{W}\rightarrow \mathbb{R},\ \mathbf{x} \mapsto \mathcal{L}\varphi(t,\mathbf{x})
$$
is continuous, for all $t\in (0,T)$, and that
$$
\sup_{\substack{t\in (0,T)\\\mathbf{x}\in K}}|\mathcal{L}\varphi(t,\mathbf{x})|<\infty.
$$
But this is straightforward from the
sublinear growth and continuity assumptions in $\mathbf{x}$ of $b,\Phi$ and from the boundedness and continuity assumption on
 $ \overline{ \partial _\mathbb{W}\varphi}, \overline{ \partial^2 _\mathbb{W}\varphi}$.
\end{proof}

\begin{corollary}\label{2017-06-08:02}
If $\varphi$ satisfies
Assumption~\ref{2017-05-30:11}(\ref{2017-05-31:06}),(\ref{2017-05-31:07}),
then, for all $t\in [\hat t,T]$, we have the following representation:
\begin{equation}\label{2017-06-02:00}
  \varphi(t,X^\hatc)=
\varphi(\hat t,\hat Y)+
\int_{\hat t}^t
\overline{ \partial _\mathbb{W}\varphi}(s,X^\hatc ).(\mathbf{1}_{[s,T]}\Phi_s)dW_s\qquad \mathbb{P}\mbox{-a.e..}
\end{equation}
\end{corollary}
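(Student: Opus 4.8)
The plan is to read off the statement as an immediate consequence of the path-dependent It\^o formula (Theorem~\ref{2017-05-30:02}) once it is fed into the backward Kolmogorov equation \eqref{eq:2016-02-09:20} proved in Theorem~\ref{2016-02-10:10}: the finite-variation part of the It\^o expansion of $\varphi(\cdot,X^\hatc)$ is precisely $\mathcal{D}^-_t\varphi+\mathcal{L}\varphi$ evaluated along the solution, which vanishes identically, so only the stochastic integral remains.

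First I would put $b_s\coloneqq b(s,X^\hatc)$ and $\Phi_s\coloneqq \Phi(s,X^\hatc)$ and check that these qualify as coefficients for Theorem~\ref{2017-05-30:02}, i.e.\ that $b\in \mathcal{L}^1_{\mathcal{P}_T}(\mathbb{W})$ and $\Phi\in \mathcal{L}^2_{\mathcal{P}_T}(C([0,T],L_2(U,H)))$. Continuity in $s$ and adaptedness follow from the fact that $b,\Phi$ are continuous and non-anticipative (Assumption~\ref{2017-06-09:05}) and that $s\mapsto X^\hatc_{s\wedge\cdot}$ is continuous in $\mathbb{W}$; the integrability follows from the sublinear growth in Assumption~\ref{2017-06-09:05} together with $\hat Y,X^\hatc\in \mathcal{L}^p_{\mathcal{P}_T}(\mathbb{W})$ with $p>2$ (the hypothesis $p>2$ is exactly what yields the required $L^2$-in-time bound for $\Phi$). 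With these coefficients the strong solution $X^\hatc$ of SDE~\eqref{eq:2016-02-09:00} is literally a process of the form \eqref{eq:2015-08-29:03}, so Theorem~\ref{2017-05-30:02} applies to it. I would then note that $\varphi$ meets all of Assumption~\ref{2017-05-30:11}: by hypothesis it satisfies (\ref{2017-05-31:06}) and (\ref{2017-05-31:07}), and Theorem~\ref{2016-02-10:10} shows it also satisfies (\ref{2017-05-31:10}).

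Applying formula \eqref{2017-05-31:09} with $u=\varphi$, and grouping the two finite-variation integrands by the very definition of $\mathcal{L}$ (using non-anticipativity to replace $X^\hatc$ by $X^\hatc_{s\wedge\cdot}$ in each evaluation), gives for $t\in[\hat t,T]$,
\begin{equation*}
\begin{split}
\varphi(t,X^\hatc)=\varphi(\hat t,\hat Y)
&+\int_{\hat t}^t\Big(\mathcal{D}^-_t\varphi(s,X^\hatc)+\mathcal{L}\varphi(s,X^\hatc)\Big)\,ds\\
&+\int_{\hat t}^t\overline{\partial_\mathbb{W}\varphi}(s,X^\hatc).(\mathbf{1}_{[s,T]}\Phi(s,X^\hatc))\,dW_s,\qquad\mathbb{P}\text{-a.e.}.
\end{split}
\end{equation*}
The key point is that \eqref{eq:2016-02-09:20} holds for \emph{every} $(s,\mathbf{x})\in(0,T)\times\mathbb{W}$, so applying it with $\mathbf{x}=X^\hatc_{s\wedge\cdot}(\omega)$ for a.e.\ $s$ and every $\omega$ makes the $ds$-integrand vanish; the drift term drops out and what survives is exactly the stochastic integral in \eqref{2017-06-02:00}.

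No single step is a serious obstacle once Theorems~\ref{2017-05-30:02} and~\ref{2016-02-10:10} are in hand; the only points requiring genuine care are the verification that the coefficient processes satisfy the integrability hypotheses of Theorem~\ref{2017-05-30:02} (the sole place where $p>2$ enters) and the bookkeeping identifying the sum of the two drift contributions with the operator $\mathcal{D}^-_t\varphi+\mathcal{L}\varphi$ of \eqref{eq:2016-02-09:20}. The conceptual content is simply that the Feynman--Kac functional $\varphi$ annihilates the generator, converting the It\^o formula into a Clark--Ocone type martingale representation.
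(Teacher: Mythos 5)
Your proof is correct and takes essentially the same route as the paper's: Theorem~\ref{2016-02-10:10} upgrades the hypotheses to the full Assumption~\ref{2017-05-30:11}, the path-dependent It\=o formula \eqref{2017-05-31:09} is applied to $\varphi(\cdot,X^{\hat t,\hat Y})$, and the Kolmogorov equation \eqref{eq:2016-02-09:20}, evaluated along $X^{\hat t,\hat Y}_{s\wedge\cdot}$ (using non-anticipativity), annihilates the $ds$-integrand, leaving only the stochastic integral. You additionally spell out that $b(\cdot,X^{\hat t,\hat Y})$ and $\Phi(\cdot,X^{\hat t,\hat Y})$ lie in the coefficient classes $\mathcal{L}^1_{\mathcal{P}_T}(\mathbb{W})$ and $\mathcal{L}^2_{\mathcal{P}_T}(C([0,T],L_2(U,H)))$ required by Theorem~\ref{2017-05-30:02}, a verification the paper leaves implicit (minor quibble: $p\geq 2$ already gives the $L^2$ bound on $\Phi$; the strict inequality $p>2$ is what the cited well-posedness results for the SDE demand).
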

\begin{proof}
  By Theorem~\ref{2016-02-10:10}, the assumptions of Theorem~\ref{2017-05-30:02} are satisfied for $\varphi$.
By applying formula~\eqref{2017-05-31:09}
to $\varphi(t,X^\hatc)$
and
recalling
\eqref{eq:2016-02-09:20},
we obtain
 \eqref{2017-06-02:00}.
\end{proof}

\section{\!\!\!The case  $b(t,\mathbf{x})\!\!=\!\!b(t,\int_{[0,T]}\mathbf{\tilde x}(t-s)\mu(ds))$ and additive noise}\label{2016-02-12:11}

In this section, in a case of interest,
we show
that
 Theorem~\ref{2016-02-10:10}
and Corollary~\ref{2017-06-08:02}
can be applied.

\medskip
The following assumption will be standing for the remaining  of this section.

\begin{assumption}\label{2016-02-11:02}
${}$
  \begin{enumerate}[(i)]
  \item $\mu\in \rad$; 
  \item 
$b\colon [0,T]\times H\rightarrow H$ 
 is continuous and there exists $N>0$ such that
  \begin{align}
    \label{eq:2016-02-11:01}
    \sup_{t\in[0,T]}|b(t,y)|_H&\leq N(1+|y|_H) && \forall y\in H,\\
    \sup_{t\in[0,T]}|b(t,y)-b(t,y')|_H&\leq N|y-y'|_H
&& \forall y,y'\in H.
  \end{align}
\item\label{2016-02-11:15} for all $t\in[0,T]$, $b(t,\cdot)\in \mathcal{G}^2(H,H)$,
  \begin{align}
&    N_1\coloneqq \sup_{\substack{(t,y)\in [0,T]\times H\\ v\in H,\ |v|_H\leq 1}}| \partial_Hb(t,y).v|_H<\infty,\label{2016-02-11:18}
\\
&N_2\coloneqq \sup_{\substack{(t,y)\in [0,T]\times H\\ v,w \in H,\ |v|_H\vee |w|_H\leq 1}}| \partial^2_Hb(t,y).(v,w)|_H<\infty,\label{2016-02-11:20}
\end{align}
and $ \partial _Hb(t,y).v$, $ \partial ^2_Hb(t,y).(v,w)$ are jointly continuous in $t,y,v,w$.
\end{enumerate}
\end{assumption}

We define
$$
\hat b(t,\mathbf{y})\coloneqq b \left( t,
\int_{[0,T]}
\mathbf{\tilde y}(t-s)
\mu(ds) \right) \qquad \forall (t,\mathbf{y})\in[0,T]\times \Bone{H}.
$$
where
\begin{equation}
  \label{eq:2016-02-11:16}
 \mathbf{\tilde y}(r)\coloneqq 
\mathbf{1}_{[-T,0)}(r)\mathbf{y}(0)+\mathbf{1}_{[0,T]}(r)\mathbf{y}(r) \qquad \forall r\in[-T,T].
\end{equation}

Then $\hat b(t,\mathbf{y})$ is
 a 
function of
$t$ and
 the
convolution between $\mu$ and $\mathbf{y}$
computed taking into account the past history of $\mathbf{y}$ on the time window $[t-T,t]$.

\begin{remark}
\label{2016-02-12:01}
  The fact that $b(t,\cdot)\in \mathcal{G}^2(H,H)$,
with differentials uniformly bounded,
 implies that $b(t,\cdot)\in C^1_b(H,H)$, i.e.\ $b(t,\cdot)$ is Fr\'echet differentiable and the Fr\'echet differential $Db(t,\cdot)$ is continuous and bounded (with bound uniform in $t$, due to our assumptions on $b$).
For the proof, see \cite[Proposition 7.4.1]{DaPrato2004}.
\end{remark}

Let again
 $W$ denote a  $U$-valued cylindrical  Wiener process
and let $B\in L_2(U,H)$. 
Consider
 the following SDE:
\begin{equation}\label{2016-02-11:00}
  \begin{dcases}
    dX_s=
\hat b(s,X)ds
+ BdW_s &\quad \forall s\in [\hat t,T]\\
    X_{ \hat t\wedge \cdot}= \hat Y_{ \hat  t\wedge \cdot},&
  \end{dcases}
\end{equation}
for $\hat Y\in \mathcal{L}^p_{\mathcal{P}_T} (\mathbb{W})$, $p>2$.
Notice that Assumption~\ref{2017-06-09:05}
is verified
 with the present coefficients $b$ and $\Phi\equiv B$.
Our aim is to prove
a certain  regularity 
of the solution $X^\hatc$ of \eqref{2016-02-11:00}
 with respect to the initial datum $\hat Y\in \mathcal{L}^p_{\mathcal{P}_T}(\mathbb{W})$, $p>2$, suitable
to apply
Theorem~\ref{2016-02-10:10}
and
Corollary~\ref{2017-06-08:02}.

\begin{remark}
  The choice $\mu=\delta_0$, Dirac measure in $0$, corresponds to the Markovian case $\hat b(s,\mathbf{y})=b(t,\mathbf{y}(t))$.
By choosing $\mu=\delta_{a}$, Dirac measure centered in $a\in(0,T]$, we obtain a drift $\hat b(t,\mathbf{y})=b(t,\mathbf{y}(t-a))$ with a pointwise delay.
\end{remark}

By Assumption  \ref{2016-02-11:02}, we have that
$\hat b(\cdot,\mathbf{y})$ is continuous for all $\mathbf{y}\in \Cb{H}$.
Moreover,
$$
\left|\hat b(t,\mathbf{y}_1)-\hat b(t,\mathbf{y}_2)
\right|_H
\leq
N\int_{[0,T]}\left|\mathbf{\tilde y}_1(t-r)-\mathbf{\tilde y}_2
(t-r)
\right|_H\mu(dr)\qquad
\forall \mathbf{y}_1,\mathbf{y}_2\in \Bone{H}.
$$
Then, if $\{\mathbf{y}_n\}_{n\in \mathbb{N}}\subset \mathbb{W}$ and $\mathbf{y}_n\rightarrow \mathbf{y}$ in $\Bones{H}$,
we have $\hat b(t,\mathbf{y}_n)\rightarrow \hat b(t,\mathbf{y})$ for all $t\in[0,T]$.
Hence $\hat b(\cdot, \mathbf{y})\in \Bone{H}$,
for all $\mathbf{y}\in \Bone{H}$.
In particular, for all $\mathbf{y}\in \Bone{H}$, the indefinite integral
$$
[0,T]\rightarrow H,\ \xi \mapsto \int_t^{t\vee \xi} \hat b(s,\mathbf{y})ds
$$
is continuous.

These considerations entails the well-posedness,
{\underline{for any fixed $\omega\in \Omega$},\label{2017-06-09:00}
 of the map
\begin{equation}
  \label{eq:2016-02-11:03}
  \psi\colon [0,T]\times \Bone{H}\times \Bone{H}\rightarrow \Bone{H}
\end{equation}
defined by
$$
\psi(t,\mathbf{x},\mathbf{y})\coloneqq \mathbf{x}_{t\wedge \cdot}+\int_t^{t\vee \cdot}\hat b(s,\mathbf{y})ds+(W^B_{t\vee \cdot}(\omega)-W^B_t(\omega))\quad \forall (t,\mathbf{x},\mathbf{y})\in [0,T]\times \Bone{H}\times \Bone{H},
$$
where $W^B$ is a short notation for a fixed representant of $\int_0^\cdot BdW_s$.

In the following propositions, we prove existence and uniqueness of a fixed point for $\psi(t,\mathbf{x},\cdot)$ and  study how the fixed point depends on $t,\mathbf{x}$.
We use standard arguments based on contractions in Banach spaces.
What is important is that the SDE is here considered pathwise, in order to have better insight 
about the regularity of the paths $X^{t,\mathbf{x}}(\omega)$ with respect to $\mathbf{x}$.

\begin{remark}\label{2016-04-22:06}
  In the notation $\psi$,
the  dependence on $\omega$ is not explicit.
Nevertheless, we stress the very important fact that all the 
bounds for
the Lipschitz constants and 
 the differentials, which appear in the following propositions, are independent of $\omega$.
More precisely,
the terms $\lambda,\alpha$  appearing in 
Proposition~\ref{2016-02-11:27}\emph{(\ref{2016-02-11:11})},
the bounds for \eqref{2016-04-22:03} and 
\eqref{2016-04-22:04},
the bounds for $ \partial _{\mathbb{B}^1}\Lambda^{t,\cdot}$ and $ \partial _{\mathbb{B}^1}^2\Lambda^{t,\cdot}$ in 
Proposition~\ref{2016-02-11:37}, 
can be --- and we assume that they are --- 
chosen
independently of $\omega$.
\end{remark}

For $\lambda>0$, we introduce on $\Bone{H}$ the norm
$$
|\mathbf{x}|_\lambda\coloneqq \sup_{t\in [0,T]}e^{-\lambda t}|\mathbf{x}(t)|_H,\qquad \forall \mathbf{x}\in \Bone{H}.
$$
Then $|\cdot|_\lambda$ is equivalent to $|\cdot|_\infty$.

Hereafter,  we denote by $\Boninf{H}$ the Banach space $(\Bone{H},|\cdot|_\infty)$ and by $\Bonl{H}$ the equivalent Banach space $(\Bone{H},|\cdot|_\lambda)$.

\begin{proposition}\label{2016-02-11:27}
${}$
  \begin{enumerate}[(i)]
  \item\label{2016-02-11:11}   There exists $\lambda>0$ and $\alpha\in(0,1)$ such that
  \begin{equation}
    \label{eq:2016-02-11:10}
   \sup_{(t,\mathbf{x})\in[0,T]\times \Bone{H}} |\psi(t,\mathbf{x},\mathbf{y})
-    \psi(t,\mathbf{x},\mathbf{y}')|_\lambda\leq \alpha
|\mathbf{y}-\mathbf{y}'|_\lambda\qquad \forall \mathbf{y},\mathbf{y}\in \Bone{H}.
  \end{equation}
\item\label{2016-02-11:12} The restriction of $\psi$ to $[0,T]\times \mathbb{W}\times \Boninf{H}$ is $\mathbb{W}$-valued and continuous.
\item\label{2016-02-11:13} For all $t\in[0,T]$, the section
$$
\psi(t,\cdot,\cdot)\colon \Boninf{H}\times \Boninf{H}\rightarrow  \Boninf{H},\ (\mathbf{x},\mathbf{y}) \mapsto \psi(t,\mathbf{x},\mathbf{y})
$$ 
is strongly continuously G\^ateaux differentiable up to order $2$, i.e.\
$$
\psi(t,\cdot,\cdot)\in \mathcal{G}^2(\Boninf{H}\times \Boninf{H},\Boninf{H}).
$$
Moreover,  
\begin{equation}\label{2016-04-22:03}
   \sup_{\substack{t\in[0,T],\ \mathbf{x},\mathbf{y}\in \Boninf{H}\\\mathbf{v}\in \Boninf{H},\ |\mathbf{v}|_\infty\leq 1}}| \partial _2\psi(t,\mathbf{x},\mathbf{y}).\mathbf{v}|_\infty+
  \sup_{\substack{t\in[0,T],\ \mathbf{x},\mathbf{y}\in \Boninf{H}\\\mathbf{v}\in \Boninf{H},\ |\mathbf{v}|_\infty\leq 1}}| \partial _3\psi(t,\mathbf{x},\mathbf{y}).\mathbf{v}|_\infty<\infty
\end{equation}
\begin{equation}\label{2016-04-22:04}
\hskip-2cm    \sup_{\substack{t\in[0,T],\ \mathbf{x},\mathbf{y}\in \Boninf{H}\\\mathbf{v},\mathbf{w}\in \Boninf{H},\ |\mathbf{v}|_\infty\vee |\mathbf{w}|_\infty\leq 1}}| \partial^2 _3\psi(t,\mathbf{x},\mathbf{y}).(\mathbf{v},\mathbf{w})|_\infty<\infty,
  \end{equation}
where $ \partial _i\psi$ and $ \partial ^2_i\psi$ denote the
first- and second-order G\^ateaux differential of $\psi$ with respect to the
$i$-th variable.
\item\label{2016-02-11:44}
If $t_n\rightarrow t$ in $[0,T]$,
 $\mathbf{x}_n\rightarrow \mathbf{x}$ in $\Boninf{H}$,
 $\mathbf{y}_n\rightarrow \mathbf{y}$ in $\Bones{H}$, $\mathbf{v}_n\rightarrow \mathbf{v}$ in $\Bones{H}$,
$\mathbf{w}_n\rightarrow \mathbf{w}$ in $\Bones{H}$, then 
\begin{gather}
   \partial _3\psi(t_n,\mathbf{x}_n,\mathbf{y}_n).\mathbf{v}_n
\rightarrow
 \partial _3\psi(t,\mathbf{x},\mathbf{y}).\mathbf{v}\mbox{ in }\Boninf{H}\label{2016-02-12:00}\\[2pt]
   \partial^2 _3\psi(t_n,\mathbf{x}_n,\mathbf{y}_n).(\mathbf{v}_n,\mathbf{w}_n)
\rightarrow
 \partial^2 _3\psi(t,\mathbf{x},\mathbf{y}).(\mathbf{v},\mathbf{w})\mbox{ in }\Boninf{H}.\label{2017-06-09:04}
\end{gather}
\end{enumerate}
\end{proposition}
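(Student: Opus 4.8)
The plan is to treat the four items in order, isolating all the dependence on $\mathbf{y}$ (and, later, on the directions $\mathbf{v},\mathbf{w}$) in the single drift term $\int_t^{t\vee\cdot}\hat b(s,\mathbf{y})\,ds$, since $\mathbf{x}\mapsto\mathbf{x}_{t\wedge\cdot}$ is linear and the Wiener term $W^B_{t\vee\cdot}(\omega)-W^B_t(\omega)$ depends on neither $\mathbf{x}$ nor $\mathbf{y}$. For item~(\ref{2016-02-11:11}) I would start from the Lipschitz estimate for $\hat b$ established just before the statement, which gives $|\psi(t,\mathbf{x},\mathbf{y})(\xi)-\psi(t,\mathbf{x},\mathbf{y}')(\xi)|_H\le N\int_t^{t\vee\xi}\int_{[0,T]}|\tilde{\mathbf{y}}(s-r)-\tilde{\mathbf{y}}'(s-r)|_H\,\mu(dr)\,ds$. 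Since $r\ge 0$ forces $(s-r)^+\le s$, the definition of $\tilde{\mathbf{y}}$ and of $|\cdot|_\lambda$ yield $|\tilde{\mathbf{y}}(s-r)-\tilde{\mathbf{y}}'(s-r)|_H\le e^{\lambda s}|\mathbf{y}-\mathbf{y}'|_\lambda$; integrating, multiplying by $e^{-\lambda\xi}$, and taking the supremum over $\xi$ produces $|\psi(t,\mathbf{x},\mathbf{y})-\psi(t,\mathbf{x},\mathbf{y}')|_\lambda\le (N|\mu|_1/\lambda)\,|\mathbf{y}-\mathbf{y}'|_\lambda$, so that $\alpha\coloneqq N|\mu|_1/\lambda<1$ once $\lambda>N|\mu|_1$, uniformly in $(t,\mathbf{x})$.

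Item~(\ref{2016-02-11:12}) I would handle termwise: $\mathbf{x}_{t\wedge\cdot}$ and $W^B_{t\vee\cdot}(\omega)-W^B_t(\omega)$ have continuous paths and depend continuously on $(t,\mathbf{x})$ in $|\cdot|_\infty$, while $\int_t^{t\vee\cdot}\hat b(s,\mathbf{y})\,ds$ is a continuous path that is jointly continuous in $(t,\mathbf{y})$ because $\hat b$ is bounded on bounded sets (from the sublinear growth of $b$) and is $|\cdot|_\infty$-Lipschitz in $\mathbf{y}$. For item~(\ref{2016-02-11:13}) I would write $\hat b(s,\mathbf{y})=b(s,L_s\mathbf{y})$, where $L_s\mathbf{y}\coloneqq\int_{[0,T]}\tilde{\mathbf{y}}(s-r)\,\mu(dr)$ defines a bounded linear operator $L_s\colon\Boninf{H}\to H$ with $|L_s|\le|\mu|_1$. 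Since $b(s,\cdot)\in\mathcal{G}^2(H,H)$ with $\partial_H b,\partial_H^2 b$ bounded by $N_1,N_2$ and jointly continuous (Assumption~\ref{2016-02-11:02}), differentiation under the integral sign (justified exactly as in Example~\ref{2016-02-10:09}) gives $\partial_3\psi(t,\mathbf{x},\mathbf{y}).\mathbf{v}=\int_t^{t\vee\cdot}\partial_H b(s,L_s\mathbf{y}).(L_s\mathbf{v})\,ds$ and $\partial_3^2\psi(t,\mathbf{x},\mathbf{y}).(\mathbf{v},\mathbf{w})=\int_t^{t\vee\cdot}\partial_H^2 b(s,L_s\mathbf{y}).(L_s\mathbf{v},L_s\mathbf{w})\,ds$, together with $\partial_2\psi(t,\mathbf{x},\mathbf{y}).\mathbf{v}=\mathbf{v}_{t\wedge\cdot}$ and the vanishing of every second derivative involving the affine $\mathbf{x}$-slot. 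The bounds \eqref{2016-04-22:03}--\eqref{2016-04-22:04} then follow from $|L_s|\le|\mu|_1$ and $N_1,N_2$ (e.g.\ $|\partial_3\psi.\mathbf{v}|_\infty\le T N_1|\mu|_1|\mathbf{v}|_\infty$), and strong $|\cdot|_\infty$-continuity of the differentials comes from dominated convergence and the joint continuity of $\partial_H b,\partial_H^2 b$.

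The real work is item~(\ref{2016-02-11:44}), which I expect to be the main obstacle: here $\mathbf{y}_n,\mathbf{v}_n,\mathbf{w}_n$ converge only in $\Bones{H}$, i.e.\ boundedly and pointwise by Proposition~\ref{2015-08-24:00}(\ref{2016-01-25:08}), not uniformly, so uniform continuity of $\partial_H b$ cannot be invoked. The point I would exploit is that $L_s$ turns $\sigma^s$-convergence into $H$-convergence: for each fixed $s$, $\tilde{\mathbf{y}}_n(s-r)\to\tilde{\mathbf{y}}(s-r)$ pointwise in $r$ and stays bounded, so dominated convergence against the finite measure $\mu$ gives $L_s\mathbf{y}_n\to L_s\mathbf{y}$ and $L_s\mathbf{v}_n\to L_s\mathbf{v}$ in $H$. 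Joint continuity of $\partial_H b$ then yields $g_n(s)\coloneqq\partial_H b(s,L_s\mathbf{y}_n).(L_s\mathbf{v}_n)\to\partial_H b(s,L_s\mathbf{y}).(L_s\mathbf{v})\eqqcolon g(s)$ for each $s$, with $|g_n(s)|_H\le N_1|\mu|_1\sup_n|\mathbf{v}_n|_\infty$ uniformly. To upgrade to uniform convergence of the paths I would split, for $\xi\in[0,T]$, $|\int_{t_n}^{t_n\vee\xi}g_n-\int_t^{t\vee\xi}g|_H\le\int_0^T|g_n-g|_H\,ds+|\int_{t_n}^{t_n\vee\xi}g-\int_t^{t\vee\xi}g|_H$; the first summand tends to $0$ uniformly in $\xi$ by dominated convergence on $[0,T]$, while the second is bounded by $N_1|\mu|_1\sup_n|\mathbf{v}_n|_\infty\,|t_n-t|$, because for every $\xi$ the maps $\xi\mapsto\int_{t_n}^{t_n\vee\xi}g$ and $\xi\mapsto\int_t^{t\vee\xi}g$ differ by the integral of $g$ over an interval of length at most $|t_n-t|$. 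This proves \eqref{2016-02-12:00}, and \eqref{2017-06-09:04} follows by the identical argument with $\partial_H^2 b$, $N_2$, and the two directions $L_s\mathbf{v}_n,L_s\mathbf{w}_n$ in place of $\partial_H b$, $N_1$, $L_s\mathbf{v}_n$.
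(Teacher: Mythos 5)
Your proposal is correct and follows essentially the same route as the paper: the same weighted-norm contraction estimate for (i), the same explicit formulas $\partial_2\psi(t,\mathbf{x},\mathbf{y}).\mathbf{v}=\mathbf{v}_{t\wedge\cdot}$ and $\partial_3\psi(t,\mathbf{x},\mathbf{y}).\mathbf{v}=\int_t^{t\vee\cdot}\partial_H b(s,L_s\mathbf{y}).(L_s\mathbf{v})\,ds$ (the paper's \eqref{eq:2016-02-11:17}) for (iii), and for (iv) the same key observation that the convolution operator turns $\sigma^s$-convergence (bounded pointwise convergence, via Proposition~\ref{2015-08-24:00}) into $H$-convergence, so that joint continuity of $\partial_H b$, $\partial^2_H b$ plus dominated convergence yields the limits. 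Your splitting argument that upgrades pointwise convergence of the integrands to $|\cdot|_\infty$-convergence of the paths is simply a more explicit writeup of the step the paper states tersely.
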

\begin{proof}
  \emph{(\ref{2016-02-11:11})}
For $t\in [0,T]$ and $\mathbf{x}\in \Bone{H}$, 
by standard computations,
we have
\begin{equation*}
  \begin{split}
    e^{-\lambda s}
|\psi(t,\mathbf{x},\mathbf{y})(s)
-
\psi(t,\mathbf{x},\mathbf{y}')(s)|_H
&\leq
e^{-\lambda s}\int_0^s|\hat b(r,\mathbf{y})-\hat b(r,\mathbf{y}')|_Hdr\\
&\leq 
\int_0^s
e^{-\lambda(s-r)}
e^{-\lambda r}|\hat b(r,\mathbf{y})-\hat b(r,\mathbf{y}')|_Hdr\\
&\leq \frac{1-e^{-\lambda T}}{\lambda}|\hat b(\cdot,\mathbf{y})-\hat b(\cdot,\mathbf{y}')|_\lambda\\
&\leq \frac{1-e^{-\lambda T}}{\lambda}
N|\mu|_1|\mathbf{y}-\mathbf{y}'|_\lambda,
\end{split}
\end{equation*}
for all $\mathbf{y},\mathbf{y}'\in \Bone{H}$ and all $s\in[0,T]$.
Then, for all  $t,\mathbf{x},\mathbf{y},\mathbf{y}$,
\begin{equation}
  \label{eq:2016-02-11:14}
|\psi(t,\mathbf{x},\mathbf{y})
-
\psi(t,\mathbf{x},\mathbf{y}')|_\lambda
\leq \frac{1-e^{-\lambda T}}{\lambda}
N|\mu|_1|\mathbf{y}-\mathbf{y}'|_\lambda.
\end{equation}
By defining $\alpha\coloneqq \frac{1-e^{-\lambda T}}{\lambda}
N|\mu|_1$, for $\lambda$ sufficiently large we obtain
\emph{(\ref{2016-02-11:11})}.

 \emph{(\ref{2016-02-11:12})}
Due to \emph{(\ref{2016-02-11:11})}, it is 
sufficient
 to prove that $\psi(\cdot,\cdot,\mathbf{y})$ is $\mathbb{W}$-valued and continuous on $[0,T]\times \mathbb{W}$, for
all
 $\mathbf{y}\in \Bone{H}$.
But this comes from the continuity of the maps
$$
[0,T]\times \mathbb{W}\rightarrow \mathbb{W},\ 
(t,\mathbf{x}) \mapsto \mathbf{x}_{t\wedge \cdot}
\qquad\qquad
[0,T]\rightarrow H,\ s \mapsto \int_0^s\hat b(r,\mathbf{y})dr +W^B_s(\omega).
$$

  \emph{(\ref{2016-02-11:13})}+\emph{(\ref{2016-02-11:44})}
We begin by showing that, for all $t\in[0,T]$,
\begin{equation}
  \label{eq:2016-02-11:38}
  \Psi_t\colon \Boninf{H}\rightarrow \Boninf{H},\ \mathbf{y} \mapsto \int_t^{t\vee \cdot}\hat b(r,\mathbf{y})dr
\end{equation}
is strongly continuously G\^ateaux differentiable up to order $2$, with bounded differentials (bound uniform in $t$).
By standard computations, due to Assumption 
\ref{2016-02-11:02}\emph{(\ref{2016-02-11:15})}, we have
\begin{equation*}
  \begin{split}
&  h^{-1}(  \Psi_t(\mathbf{y}+h\mathbf{v})-\Psi_t(\mathbf{y}))\\
&=
\int_t^{t\vee \cdot} \left( \int_0^1
\langle \nabla _H
b\left(r,\int_{[0,T]}\mathbf{\tilde y}(r-s)\mu(ds)
+\theta h
\int_{[0,T]}\mathbf{\tilde v}(r-s)\mu(ds)
 \right),
 \int_{[0,T]}\mathbf{\tilde v}(r-s)\mu(ds)  \rangle _H
d\theta
 \right)dr,
  \end{split}
\end{equation*}
where $\nabla_Hb$ represents $ \partial _Hb$ in $H$.
Due to the assumptions on $ \partial_Hb$, we can pass to the limit
$
h^{-1}(  \Psi_t(\mathbf{y}+h\mathbf{v})-\Psi_t(\mathbf{y}))$
 in $\Boninf{H}$ as $h\rightarrow 0$ and obtain
\begin{equation}
  \label{eq:2016-02-11:17}
   \partial \Psi_t(\mathbf{y}).\mathbf{v}=\int_t^{t\vee \cdot}
\langle  \nabla_Hb \left( r,
\int_{[0,T]}\mathbf{\tilde y}(r-s)
\mu(ds)
 \right),
\int_{[0,T]}\mathbf{\tilde v}(r-s)
\mu(ds) 
\rangle_H
 dr.
\end{equation}
Notice that, if $\mathbf{y}_n\rightarrow \mathbf{y}$ and
$\mathbf{v}_n\rightarrow \mathbf{v}$ in $\Bones{H}$, then
, by 
Proposition~\ref{2015-08-24:00}\emph{(\ref{2016-01-25:08})},
$$
\int_{[0,T]} \mathbf{\tilde y}_n(r-s)\mu(ds)
\rightarrow \int_{[0,T]} \mathbf{\tilde y}(r-u)\mu(du) \qquad \forall r\in[0,T]
$$
and the family
$$
 \left\{ \int_{[0,T]} \mathbf{\tilde y}_n(r-u)\mu(du) \right\} _{\substack{r\in [0,T]\\ n\in \mathbb{N}}}
$$
is bounded in $H$.
The same holds with respect to $\mathbf{\tilde v}_n$ and $\mathbf{\tilde v}$.
If $t_n\rightarrow t$ in $[0,T]$, by strong continuity of $ \partial _Hb$
and using
 \eqref{eq:2016-02-11:17},
 we conclude that
\begin{equation}
  \label{eq:2016-04-22:05}
  | \partial \Psi_t(\mathbf{y}).\mathbf{v}-
 \partial \Psi_{t_n}(\mathbf{y}_n).\mathbf{v}_n|_\infty\rightarrow 0.
\end{equation}
This proves $\eqref{2016-02-12:00}$, because $ \partial _3
\psi(t,\mathbf{x},\mathbf{y})= \partial \Psi_t(\mathbf{y})$ for all $(t,\mathbf{x},\mathbf{y})\in[0,T]\times \Bone{H}\times \Bone{H}$.
In particular, the limit  \eqref{eq:2016-04-22:05} holds when $t_n=t$, for all $n\in \mathbb{N}$, and the convergences
$\mathbf{y}_n\rightarrow \mathbf{y}$ and $\mathbf{v}_n\rightarrow \mathbf{v}$ take place in $\Boninf{H}$.
This shows that $\Psi_t\in \mathcal{G}^1( \Boninf{H},\Boninf{H})$, and, by
\eqref{2016-02-11:18}
and
 \eqref{eq:2016-02-11:17},
that the first order differentials are bounded, with bound
uniform in $t$.
By observing that $ \partial _2\psi(t,\mathbf{x},\mathbf{y}).\mathbf{v}=\mathbf{v}_{t\wedge \cdot}$ for all $t\in[0,T]$, $\mathbf{x},\mathbf{y},\mathbf{v}\in \Bone{H}$,
we have then proved that $\psi (t,\cdot,\cdot)\in \Gatot{\Boninf{H}\times \Boninf{H}}{\Boninf{H}}{1}$ and that \eqref{2016-04-22:03} holds true.

Regarding the second order derivative, 
by using similar
 arguments as above,
we obtain
\begin{equation}
  \label{eq:2016-02-11:19}
  \begin{split}
\hskip-5pt       \partial ^2&\Psi_t(\mathbf{x}).(\mathbf{v},\mathbf{w})
=\\
&=\int_t^{t\vee \cdot}
  \partial^2_Hb \left( r,
\int_{[0,T]}\mathbf{\tilde y}(r-s)
\mu(ds)
 \right).
 \left(
\int_{[0,T]}\mathbf{\tilde v}(r-s)\mu(ds)
,
\int_{[0,T]}\mathbf{\tilde w}(r-s)\mu(ds)
  \right) dr
\end{split}
\end{equation}
and the continuity of
$$
[0,T]\times \Bones{H}\times \Bones{H}\times \Bones{H}\rightarrow \Boninf{H},\ (t,\mathbf{y},\mathbf{v},\mathbf{w}) \mapsto  \partial ^2\Psi_t(\mathbf{y}).(\mathbf{v},\mathbf{w}).
$$
Then,
since $ \partial ^2_2\psi(t,\mathbf{x},\mathbf{y})=0$ and $ \partial ^2_3\psi(t,\mathbf{x},\mathbf{y})= \partial ^2\Psi_t(\mathbf{y})$,
 $ \psi(t,\cdot,\cdot)\in \mathcal{G}^2(\Boninf{H}\times \Boninf{H},\Boninf{H})$.
By \eqref{2016-02-11:20}, also the second order differentials
 $ \partial ^2_2\psi, \partial ^2_3\psi$
 are bounded, with bound uniform in $t$.
\end{proof}

In the following proposition we see how the
regularity properties of $\psi$ are inherited by the associated fixed-point map.

\begin{proposition}\label{2016-02-11:37}
${}$
  \begin{enumerate}[(i)]
  \item \label{2016-02-11:22}
  For all $(t,\mathbf{x})\in[0,T]\times \Bone{H}$, there exists a unique $\Lambda^{t,\mathbf{x}}\in \Bone{H}$ such that
$$
\Lambda^{t,\mathbf{x}}=\psi(t,\mathbf{x},\Lambda^{t,\mathbf{x}}).
$$
\item\label{2016-02-11:23} The map
$$
\Lambda\colon [0,T]\times \Boninf{H}\rightarrow \Boninf{H},\ (t,\mathbf{x}) \mapsto \Lambda^{t,\mathbf{x}}
$$
is Lipschitz in $\mathbf{x}$, with 
a bound for the
Lipschitz constant 
independent of $t$.
\item\label{2016-02-11:24} The restriction of $\Lambda$ to $[0,T]\times \mathbb{W}$ is continuous and $\mathbb{W}$-valued.
\item\label{2016-02-11:25} For all $t\in[0,T]$, $\Lambda^{t,\cdot}\in \mathcal{G}^2(\Boninf{H},\Boninf{H})$ and $ \partial_{\mathbb{B}^1}\Lambda^{t,\cdot}$, $ \partial ^2_{\mathbb{B}^1}\Lambda^{t,\cdot}$ are uniformly bounded, uniformly in $t$.
\item\label{2016-02-11:32} For all $t\in[0,T]$ and $\mathbf{x}\in \Bone{H}$,
 $I- \partial _3\psi(t,\mathbf{x},\Lambda^{t,\mathbf{x}}) \in L(\Boninf{H})$ is invertible and
 \begin{equation}
   \label{eq:2016-02-11:34}
   \Boninf{H}\rightarrow L(\Boninf{H}),\ \mathbf{x} \mapsto (I- \partial _3\psi(t,\mathbf{x},\Lambda^{t,\mathbf{x}}))^{-1}
 \end{equation}
 is strongly continuous.
\item\label{2016-02-11:33} 
For all $t\in[0,T]$, $\mathbf{x},\mathbf{v},\mathbf{w}\in \Bone{H}$,
we have
  \begin{gather*}
   \partial _{\mathbb{B}^1}\Lambda^{t,\mathbf{x}}.\mathbf{v}
=
\left(I- \partial _3\psi(t,\mathbf{x},\Lambda^{t,\mathbf{x}}) \right) ^{-1}  \left( \partial _2\psi(t,\mathbf{x},\Lambda^{t,\mathbf{x}}).\mathbf{v} \right)  \\[4pt]
   \partial ^2_{\mathbb{B}^1}
\Lambda^{t,\mathbf{x}}.(\mathbf{v},\mathbf{w})=
\left(I- \partial _3\psi(t,\mathbf{x},\Lambda^{t,\mathbf{x}}) \right) ^{-1} 
 \left( 
 \partial ^2_3
\psi(t,\mathbf{x},\Lambda^{t,\mathbf{x}}).
 \left( 
 (\partial_{\mathbb{B}^1}\Lambda^{t,\mathbf{x}}.\mathbf{v}),
(\partial_{\mathbb{B}^1}\Lambda^{t,\mathbf{x}}.\mathbf{w})
 \right) 
 \right)   
\end{gather*}
\end{enumerate}
\end{proposition}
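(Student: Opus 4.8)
The plan is to derive every item from the uniform contraction established in Proposition~\ref{2016-02-11:27}\emph{(\ref{2016-02-11:11})}, proceeding in the order \emph{(\ref{2016-02-11:22})}, \emph{(\ref{2016-02-11:23})}, \emph{(\ref{2016-02-11:24})}, \emph{(\ref{2016-02-11:32})}, and finally \emph{(\ref{2016-02-11:25})} together with \emph{(\ref{2016-02-11:33})}. For \emph{(\ref{2016-02-11:22})}, since $\Bonl{H}$ is a Banach space and $\psi(t,\mathbf{x},\cdot)$ is a contraction of modulus $\alpha\in(0,1)$ uniformly in $(t,\mathbf{x})$, the Banach fixed point theorem yields a unique $\Lambda^{t,\mathbf{x}}$. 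For \emph{(\ref{2016-02-11:23})} I would exploit that $\psi$ is affine in its first slot, so that $\psi(t,\mathbf{x},\mathbf{y})-\psi(t,\mathbf{x}',\mathbf{y})=\mathbf{x}_{t\wedge\cdot}-\mathbf{x}'_{t\wedge\cdot}$; subtracting the two fixed-point identities and using the contraction gives $(1-\alpha)|\Lambda^{t,\mathbf{x}}-\Lambda^{t,\mathbf{x}'}|_\lambda\le|\mathbf{x}-\mathbf{x}'|_\lambda$, whence the Lipschitz bound, uniform in $t$, after passing from $|\cdot|_\lambda$ to the equivalent $|\cdot|_\infty$.

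For \emph{(\ref{2016-02-11:24})}, reading the identity $\Lambda^{t,\mathbf{x}}=\psi(t,\mathbf{x},\Lambda^{t,\mathbf{x}})$ through Proposition~\ref{2016-02-11:27}\emph{(\ref{2016-02-11:12})} shows $\Lambda^{t,\mathbf{x}}\in\mathbb{W}$ whenever $\mathbf{x}\in\mathbb{W}$; joint continuity in $(t,\mathbf{x})$ is the standard parametrised-contraction argument, bounding $(1-\alpha)|\Lambda^{t,\mathbf{x}}-\Lambda^{t_0,\mathbf{x}_0}|_\lambda$ by $|\psi(t,\mathbf{x},\Lambda^{t,\mathbf{x}})-\psi(t_0,\mathbf{x}_0,\Lambda^{t,\mathbf{x}})|_\lambda$ and using the joint continuity of $\psi$ and the local boundedness of $\Lambda$. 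For \emph{(\ref{2016-02-11:32})}, the $\alpha$-Lipschitz continuity of $\psi(t,\mathbf{x},\cdot)$ forces $|\partial_3\psi|_{L(\Bonl{H})}\le\alpha<1$, so $I-\partial_3\psi$ is invertible by a Neumann series with inverse bounded by $(1-\alpha)^{-1}$; equivalence of the norms transfers this invertibility to $L(\Boninf{H})$. Writing $A(\mathbf{x}):=\partial_3\psi(t,\mathbf{x},\Lambda^{t,\mathbf{x}})$ and $(I-A(\mathbf{x}))^{-1}=\sum_{k\ge0}A(\mathbf{x})^k$, strong continuity of the inverse follows because $\mathbf{x}_n\to\mathbf{x}$ in $\Boninf{H}$ gives $\Lambda^{t,\mathbf{x}_n}\to\Lambda^{t,\mathbf{x}}$ in $\Boninf{H}$, hence in $\Bones{H}$, so $A(\mathbf{x}_n)\to A(\mathbf{x})$ strongly by \eqref{2016-02-12:00}, and the uniform bound $\alpha^k$ permits passing to the limit termwise.

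The heart of the proof, and the step I expect to be most delicate, is \emph{(\ref{2016-02-11:25})}--\emph{(\ref{2016-02-11:33})}: a Fr\'echet implicit function theorem is not available, since $\psi$ is only G\^ateaux differentiable with $\sigma^s$-sequentially continuous differentials, so I would verify G\^ateaux differentiability of $\Lambda^{t,\cdot}$ directly. Setting $\mathbf{z}_h:=h^{-1}(\Lambda^{t,\mathbf{x}+h\mathbf{v}}-\Lambda^{t,\mathbf{x}})$, the exact first-slot increment $\psi(t,\mathbf{x}+h\mathbf{v},\cdot)-\psi(t,\mathbf{x},\cdot)=h\,\mathbf{v}_{t\wedge\cdot}$ and the fundamental theorem of calculus in the third slot yield
\begin{equation*}
\mathbf{z}_h=\mathbf{v}_{t\wedge\cdot}+\int_0^1\partial_3\psi(t,\mathbf{x},\Lambda^{t,\mathbf{x}}+\theta h\mathbf{z}_h).\mathbf{z}_h\,d\theta ,
\end{equation*}
and the contraction bound gives $|\mathbf{z}_h|_\lambda\le(1-\alpha)^{-1}|\mathbf{v}|_\lambda$ uniformly in $h$. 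Comparing with the candidate $\mathbf{z}:=(I-\partial_3\psi(t,\mathbf{x},\Lambda^{t,\mathbf{x}}))^{-1}(\mathbf{v}_{t\wedge\cdot})$, subtracting the two relations, and invoking \eqref{2016-02-12:00} together with $\Lambda^{t,\mathbf{x}}+\theta h\mathbf{z}_h\to\Lambda^{t,\mathbf{x}}$ gives $(1-\alpha)|\mathbf{z}_h-\mathbf{z}|_\lambda\to0$, proving the first formula.

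Continuity of $(\mathbf{x},\mathbf{v})\mapsto\partial_{\mathbb{B}^1}\Lambda^{t,\mathbf{x}}.\mathbf{v}$ and the uniform bound then follow from this formula, item \emph{(\ref{2016-02-11:32})}, and \eqref{2016-04-22:03}. Differentiating the linear relation $\partial_{\mathbb{B}^1}\Lambda^{t,\mathbf{x}}.\mathbf{v}=\mathbf{v}_{t\wedge\cdot}+\partial_3\psi(t,\mathbf{x},\Lambda^{t,\mathbf{x}}).(\partial_{\mathbb{B}^1}\Lambda^{t,\mathbf{x}}.\mathbf{v})$ once more in direction $\mathbf{w}$ by the same difference-quotient scheme---now controlled through \eqref{2017-06-09:04}, and simplified by $\partial^2_2\psi=0$ since $\partial_2\psi$ is constant---produces the stated second-order formula and, via \eqref{2016-04-22:04}, the uniform bound on $\partial^2_{\mathbb{B}^1}\Lambda^{t,\cdot}$, which completes the proof.
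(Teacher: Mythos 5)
Your proposal is correct, and its skeleton (the uniform $|\cdot|_\lambda$-contraction, Banach's fixed point theorem, Neumann series for $(I-\partial_3\psi)^{-1}$) coincides with the paper's; the genuine difference is that you prove directly what the paper obtains by citation. For \emph{(i)}, \emph{(ii)}, \emph{(v)} you essentially reproduce the paper's argument: the paper gets the Lipschitz estimate of \emph{(ii)} from the inequality $(***)$ in \cite{Granas2003}, which you rederive by hand from the affine first slot, $\psi(t,\mathbf{x},\mathbf{y})-\psi(t,\mathbf{x}',\mathbf{y})=\mathbf{x}_{t\wedge\cdot}-\mathbf{x}'_{t\wedge\cdot}$; and your Neumann-series argument for the strong continuity of \eqref{eq:2016-02-11:34}, with the termwise limit justified by the uniform bound $\alpha^k$, is exactly the paper's. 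The substantive divergence is in \emph{(iv)} and \emph{(vi)}: the paper disposes of them by invoking \cite[Theorems 7.1.2 and 7.1.3]{DaPrato2004} (G\^ateaux differentiability of fixed points of parametrized contractions), whereas you re-prove those results in this concrete setting via difference quotients, the fundamental theorem of calculus in the third slot, the uniform bound $|\partial_3\psi|_{L(\Bonl{H})}\le\alpha$, and the sequential continuity statements \eqref{2016-02-12:00}, \eqref{2017-06-09:04}. Your scheme (uniform bound on $\mathbf{z}_h$, comparison with the candidate $(I-\partial_3\psi)^{-1}(\mathbf{v}_{t\wedge\cdot})$, then dominated convergence in $\theta$) is sound and has the merit of being self-contained; what it costs is length, since it is in effect a specialization of the proofs of the cited theorems. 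Likewise in \emph{(iii)} the paper first reduces, via \emph{(ii)}, to continuity of $t\mapsto\Lambda^{t,\mathbf{x}}$ for fixed $\mathbf{x}$ and then cites \cite[Theorem 7.1.5]{DaPrato2004}, while you argue jointly in $(t,\mathbf{x})$.

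One step of \emph{(iii)} should be tightened. You bound $(1-\alpha)|\Lambda^{t,\mathbf{x}}-\Lambda^{t_0,\mathbf{x}_0}|_\lambda$ by $|\psi(t,\mathbf{x},\Lambda^{t,\mathbf{x}})-\psi(t_0,\mathbf{x}_0,\Lambda^{t,\mathbf{x}})|_\lambda$, whose third argument moves with $(t,\mathbf{x})$; pointwise joint continuity of $\psi$ does not by itself make this go to zero, so what you actually need (and implicitly use, via ``local boundedness of $\Lambda$'') is that $\psi(t,\mathbf{x},\mathbf{y})-\psi(t_0,\mathbf{x}_0,\mathbf{y})$ is small uniformly over $\mathbf{y}$ in bounded sets of $\Boninf{H}$ --- true here thanks to the explicit form of $\psi$, the linear growth of $\hat b$, and the uniform continuity of $\mathbf{x}_0$ and of the path $W^B(\omega)$. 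The standard decomposition avoids this issue entirely by evaluating at the limit fixed point, i.e.\ $(1-\alpha)|\Lambda^{t,\mathbf{x}}-\Lambda^{t_0,\mathbf{x}_0}|_\lambda\le|\psi(t,\mathbf{x},\Lambda^{t_0,\mathbf{x}_0})-\psi(t_0,\mathbf{x}_0,\Lambda^{t_0,\mathbf{x}_0})|_\lambda$, where the third argument is frozen and Proposition~\ref{2016-02-11:27}\emph{(\ref{2016-02-11:12})} applies verbatim; this is the form of the argument behind the paper's citation. With that repair (and an explicit word on the joint continuity of the second-order differential of $\Lambda^{t,\cdot}$, which follows from your formula together with \emph{(v)} and \eqref{2017-06-09:04}), your proof is complete.
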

\begin{proof}
By Proposition \ref{2016-02-11:27}\emph{(\ref{2016-02-11:11})}, we can choose $\lambda>0$ such that $\psi(t,\mathbf{x},\cdot)$ is an $\alpha$-contraction on $\Bonl{H}$, with $\alpha\in(0,1)$, uniformly in $(t,\mathbf{x})\in [0,T]\times \Bone{H}$.

 \emph{(\ref{2016-02-11:22})}
Apply Banach's contraction principle to $\psi(t,\mathbf{x},\cdot)$ on $\Bonl{H}$.

\emph{(\ref{2016-02-11:23})}
For every $t\in[0,T]$, we have
$$
|\psi(t,\mathbf{x},\mathbf{y})
-
\psi(t,\mathbf{x}',\mathbf{y})|_\lambda\leq 
|\mathbf{x}-\mathbf{x}'|_\lambda\qquad \forall \mathbf{x},\mathbf{x}'\in \Bone{H}.
$$
The conclusion 
follows by 
\cite[p.\ 13, inequality ($***$)]{Granas2003}.

\emph{(\ref{2016-02-11:24})}
Since $\psi$ maps $[0,T]\times \mathbb{W}\times \mathbb{W}$ into
 $\mathbb{W}$
by
Proposition~\ref{2016-02-11:27}\emph{(\ref{2016-02-11:12})},
we also have that $\Lambda$ maps $[0,T]\times \mathbb{W}$ into $\mathbb{W}$.
Let us denote by $\Lambda_\mathbb{W}$ the map
$$
\Lambda_\mathbb{W}\colon [0,T]\times \mathbb{W}\rightarrow \mathbb{W},\ (t,\mathbf{x}) \mapsto \Lambda^{t,\mathbf{x}}.
$$
By
\emph{(\ref{2016-02-11:23})}, to prove the continuity 
of $\Lambda_\mathbb{W}$, it is sufficient to show the continuity of $\Lambda^{\cdot,\mathbf{x}}$, for fixed $\mathbf{x}\in \mathbb{W}$.
Let $t_n\rightarrow t$ in $[0,T]$.
We have
$$
\psi(t_n,\mathbf{x},\mathbf{y})\rightarrow \psi(t,\mathbf{x},\mathbf{y})\mbox{ in }\mathbb{W},\ \forall \mathbf{x}, \mathbf{y}\in \mathbb{W}.
$$
Then the conclusion follows by 
\cite[Theorem 7.1.5]{DaPrato2004}.

\emph{(\ref{2016-02-11:25})}+\emph{(\ref{2016-02-11:32})}+\emph{(\ref{2016-02-11:33})}
Thanks to Proposition \ref{2016-02-11:27}\emph{(\ref{2016-02-11:13})},
 we can apply
\cite[Theorems 7.1.2 and 7.1.3]{DaPrato2004}
 to all maps $\psi(t,\cdot,\cdot)$, for all $t\in[0,T]$.
This shows
\emph{(\ref{2016-02-11:25})}
and \emph{(\ref{2016-02-11:33})}.

It remains only to comment the strong continuity of
\eqref{eq:2016-02-11:34} (which is indeed contained in the proof of \cite[Theorems 7.1.2 and 7.1.3]{DaPrato2004}).
This comes from the fact that, for all $t\in[0,T]$,
by
\emph{(\ref{2016-02-11:23})} and 
 Proposition \ref{2016-02-11:27}\emph{(\ref{2016-02-11:13})}, the map
$$
\Bonl{H}
\rightarrow
L(\Bonl{H}),\ \mathbf{x} \mapsto  \partial _3\psi(t,\mathbf{x},\Lambda^{t,\mathbf{x}})
$$
is strongly continuous and
 $| \partial _3\psi(t,\mathbf{x},\Lambda^{t,\mathbf{x}})|_{L(\Bonl{H})}\leq \alpha$ for all $\mathbf{x}\in \Bone{H}$.
By writing
\begin{equation}
  \label{eq:2017-06-04:01}
  (I-\partial _3\psi(t,\mathbf{x},\Lambda^{t,\mathbf{x}}))^{-1}\mathbf{v}=\sum_{n\in \mathbb{N}}
(\partial _3\psi(t,\mathbf{x},\Lambda^{t,\mathbf{x}}))^n\mathbf{v}\qquad \forall \mathbf{v}\in \Bone{H}
\end{equation}
and
by  Lebesgue's dominated convergence theorem (for sums), we see
the strong continuity of \\
$$
\displaystyle{\Bonl{H}
\rightarrow
L(\Bonl{H}),\ \mathbf{x} \mapsto (I- \partial _3\psi(t,\mathbf{x},\Lambda^{t,\mathbf{x}}))^{-1}}.
\eqno\qed
$$
\let\qed\relax
\end{proof}

\medskip
The following proposition provides the good continuity of the differentials of $\Lambda$ with respect to $\mathbf{x}$, that we will later need in order to apply Theorem
\ref{2016-02-10:10}
and Corollary~\ref{2017-06-08:02}
 when the process $X$ has the dynamics 
\eqref{2016-02-11:00}.

\begin{proposition}\label{2016-02-12:02}
Let  $t\in [0,T]$.
\begin{enumerate}[(i)]
\item\label{2017-06-03:08} If $\mathbf{x}_n\rightarrow \mathbf{x}$ in $\Boninf{H}$, $\mathbf{v}_n\rightarrow \mathbf{v}$ in $\Bones{H}$, and
  $\mathbf{w}_n\rightarrow \mathbf{w}$ in $\Bones{H}$, then
\begin{align}
     \partial _{\mathbb{B}^1}\Lambda^{t,\mathbf{x}_{n}}.\mathbf{v}_{n}
&\rightarrow
 \partial _{\mathbb{B}^1}\Lambda^{t,\mathbf{x}}.\mathbf{v}\mbox{ in }\Bones{H}\label{2016-02-11:36}\\[2pt]
   \partial^2 _{\mathbb{B}^1}\Lambda^{t,\mathbf{x}_{n}}.(\mathbf{v}_{n},\mathbf{w}_{n})
&\rightarrow
 \partial^2 _{\mathbb{B}^1}\Lambda^{t,\mathbf{x}}.(\mathbf{v},\mathbf{w})\mbox{ in }\Boninf{H}.
\label{2016-02-11:35} 
  \end{align}
\item\label{2017-06-03:07}
 If $t_n\rightarrow t^+$ in $[0,T]$, $\mathbf{x}\in \mathbb{W}$, $\mathbf{v},\mathbf{w}\in \Bone{H}$, then
  \begin{align}
   \partial _{\mathbb{B}^1}  \Lambda^{t_n,\mathbf{x}}.\mathbf{v}&\rightarrow
 \partial _{\mathbb{B}^1}  \Lambda^{t,\mathbf{x}}.\mathbf{v}\ \mbox{in\ }\Bones{H}\label{2017-06-09:02}\\[2pt]
   \partial^2 _{\mathbb{B}^1}  \Lambda^{t_n,\mathbf{x}}.(\mathbf{v},\mathbf{w})
&\rightarrow
 \partial^2 _{\mathbb{B}^1}  \Lambda^{t,\mathbf{x}}.
(\mathbf{v},\mathbf{w})
\ \mbox{in\ }\mathbb{W}.\label{2017-06-09:03}
  \end{align}
\end{enumerate}
\end{proposition}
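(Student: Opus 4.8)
The plan is to read both differentials off the explicit formulas of Proposition~\ref{2016-02-11:37}\emph{(\ref{2016-02-11:33})} and to track how $\sigma^s$- and uniform convergence propagate through them. Writing $R^{t,\mathbf{x}}\coloneqq(I-\partial_3\psi(t,\mathbf{x},\Lambda^{t,\mathbf{x}}))^{-1}$ and recalling from the proof of Proposition~\ref{2016-02-11:27} that $\partial_2\psi(t,\mathbf{x},\mathbf{y}).\mathbf{v}=\mathbf{v}_{t\wedge\cdot}$, the two formulas read
\[
\partial_{\mathbb{B}^1}\Lambda^{t,\mathbf{x}}.\mathbf{v}=R^{t,\mathbf{x}}(\mathbf{v}_{t\wedge\cdot}),
\qquad
\partial^2_{\mathbb{B}^1}\Lambda^{t,\mathbf{x}}.(\mathbf{v},\mathbf{w})
=R^{t,\mathbf{x}}\big(\partial^2_3\psi(t,\mathbf{x},\Lambda^{t,\mathbf{x}}).(\partial_{\mathbb{B}^1}\Lambda^{t,\mathbf{x}}.\mathbf{v},\partial_{\mathbb{B}^1}\Lambda^{t,\mathbf{x}}.\mathbf{w})\big).
\]
The decisive structural fact, visible in \eqref{eq:2016-02-11:17} and \eqref{eq:2016-02-11:19}, is that $\partial_3\psi$ and $\partial^2_3\psi$ are \emph{smoothing}: each first convolves its vector arguments against $\mu$ and then integrates in time, so it sends a uniformly bounded, pointwise convergent (i.e.\ $\sigma^s$-convergent) sequence of directions into a uniformly convergent one, by dominated convergence for the integrand together with the equi-Lipschitz continuity in the outer time variable that the time-integration produces. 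When $t$ varies this upgrade is exactly \eqref{2016-02-12:00} and \eqref{2017-06-09:04} of Proposition~\ref{2016-02-11:27}\emph{(\ref{2016-02-11:44})}; when only $\mathbf{x}$ varies it follows by the same computation, using $\Lambda^{t,\mathbf{x}_n}\to\Lambda^{t,\mathbf{x}}$ in $\Boninf{H}$ from Proposition~\ref{2016-02-11:37}\emph{(\ref{2016-02-11:23})} (respectively $\Lambda^{t_n,\mathbf{x}}\to\Lambda^{t,\mathbf{x}}$ in $\mathbb{W}$ from \emph{(\ref{2016-02-11:24})}) in the argument of the differentials.

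For the first-order limits \eqref{2016-02-11:36} and \eqref{2017-06-09:02} I would expand $R^{t,\mathbf{x}}$ as the Neumann series $\sum_{k\geq 0}(\partial_3\psi(t,\mathbf{x},\Lambda^{t,\mathbf{x}}))^k$, which converges geometrically with ratio $\alpha<1$ uniformly in $(t,\mathbf{x})$ by Proposition~\ref{2016-02-11:27}\emph{(\ref{2016-02-11:11})}, so that $\partial_{\mathbb{B}^1}\Lambda^{t,\mathbf{x}}.\mathbf{v}=\sum_{k\geq0}(\partial_3\psi)^k(\mathbf{v}_{t\wedge\cdot})$. Along either regime the zeroth term $\mathbf{v}_{t\wedge\cdot}$ (resp.\ $\mathbf{v}_{t_n\wedge\cdot}$) converges only in $\sigma^s$, while every term with $k\geq1$ carries at least one application of a smoothing operator and hence converges uniformly by the previous paragraph; the uniform tail bound $\tfrac{\alpha^K}{1-\alpha}\sup_n|\mathbf{v}_{t_n\wedge\cdot}|_\infty$ lets me truncate the series uniformly in $n$ and pass to the limit termwise. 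Pointwise convergence of the sum, together with the uniform bound from Proposition~\ref{2016-02-11:37}\emph{(\ref{2016-02-11:25})}, yields precisely $\sigma^s$-convergence.

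For the second-order limits \eqref{2016-02-11:35} and \eqref{2017-06-09:03} the point is that the forcing term $\mathbf{g}_n\coloneqq\partial^2_3\psi(t,\mathbf{x}_n,\Lambda^{t,\mathbf{x}_n}).(\partial_{\mathbb{B}^1}\Lambda^{t,\mathbf{x}_n}.\mathbf{v}_n,\partial_{\mathbb{B}^1}\Lambda^{t,\mathbf{x}_n}.\mathbf{w}_n)$ is itself a smoothing-operator output whose arguments converge in $\sigma^s$ by the first-order step, hence $\mathbf{g}_n\to\mathbf{g}$ uniformly (in $\Boninf{H}$, resp.\ in $\mathbb{W}$). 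Splitting $R_n(\mathbf{g}_n)-R(\mathbf{g})=R_n(\mathbf{g}_n-\mathbf{g})+(R_n-R)(\mathbf{g})$, the first summand tends to $0$ uniformly from the uniform resolvent bound $(1-\alpha)^{-1}$ in the $\lambda$-norm and the uniform convergence of $\mathbf{g}_n$, and the second tends to $0$ by the strong continuity of the resolvent map \eqref{eq:2016-02-11:34} from Proposition~\ref{2016-02-11:37}\emph{(\ref{2016-02-11:32})} (for the $t$-varying regime one repeats its Neumann-series proof, now using strong convergence of $\partial_3\psi(t_n,\mathbf{x},\Lambda^{t_n,\mathbf{x}})$ to $\partial_3\psi(t,\mathbf{x},\Lambda^{t,\mathbf{x}})$).

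The main obstacle is propagating the merely-pointwise $\sigma^s$ mode through the family of resolvents while these operators \emph{themselves} move; the smoothing property is what saves the argument, since it upgrades $\sigma^s$ to uniform convergence on every Neumann term beyond the zeroth and thereby isolates the weak mode in a single, explicitly understood term. The most delicate residual point is the $\sigma^s$-convergence of that zeroth term in part~\emph{(\ref{2017-06-03:07})}: $\mathbf{v}_{t_n\wedge\cdot}\to\mathbf{v}_{t\wedge\cdot}$ pointwise at arguments $\xi>t$ amounts to $\mathbf{v}(t_n)\to\mathbf{v}(t)$, which is exactly where the one-sided hypothesis $t_n\to t^+$ enters and which is automatic for the right-continuous step directions $\mathbf{1}_{[t,T]}v$ that the application in Theorem~\ref{2016-02-10:10} and Corollary~\ref{2017-06-08:02} feeds into these formulas.
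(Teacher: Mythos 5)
Your proposal is correct, and on the hardest point --- the $\mathbf{x}$-stability \eqref{2016-02-11:36} of the first-order derivative --- it takes a genuinely different route from the paper. The paper proves \eqref{2016-02-11:36} by a compactness-and-identification argument: it bounds $\{\partial_{\mathbb{B}^1}\Lambda^{t,\mathbf{x}_n}.\mathbf{v}_n\}_n$ in $L^2([0,T],H)$, extracts a weakly convergent subsequence, passes to the limit inside the explicit integral representation \eqref{2016-02-11:42} (weak convergence of the directions paired against strong $L^2$-convergence of the $\nabla_y b$ factor), deduces $\Bones{H}$-convergence of $\partial_3\psi(t,\mathbf{x}_{n_k},\Lambda^{t,\mathbf{x}_{n_k}}).(\partial_{\mathbb{B}^1}\Lambda^{t,\mathbf{x}_{n_k}}.\mathbf{v}_{n_k})$, and finally identifies the limit $\gamma$ through the fixed-point identity \eqref{2016-02-11:43} and Proposition~\ref{2016-02-11:37}\emph{(\ref{2016-02-11:33})}. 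You instead expand the resolvent in a Neumann series --- exactly the device the paper reserves for part~\emph{(\ref{2017-06-03:07})} via \eqref{eq:2017-06-04:01} --- and note that every term past the zeroth is smoothed by $\partial_3\psi$, so that Proposition~\ref{2016-02-11:27}\emph{(\ref{2016-02-11:44})}, applied with $t_n\equiv t$ (its hypotheses allow this) and iterated along the powers, upgrades $\sigma^s$-convergence to uniform convergence term by term, while the geometric bound $\alpha^k$ in the $|\cdot|_\lambda$-norm (equivalent to $|\cdot|_\infty$) controls the tail uniformly in $n$; bounded-plus-pointwise convergence of the sum is then exactly $\sigma^s$-convergence by Proposition~\ref{2015-08-24:00}. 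This is a legitimate simplification: it bypasses the weak-$L^2$ extraction entirely and unifies parts~\emph{(\ref{2017-06-03:08})} and \emph{(\ref{2017-06-03:07})} under one mechanism, at the price of iterating the sequential-continuity lemma along the series, where the paper invokes it only once at the identification stage. Your second-order argument (the splitting $R_n(\mathbf{g}_n)-R(\mathbf{g})=R_n(\mathbf{g}_n-\mathbf{g})+(R_n-R)\mathbf{g}$ together with the strong continuity of Proposition~\ref{2016-02-11:37}\emph{(\ref{2016-02-11:32})}) coincides with the paper's. Finally, your closing remark on the zeroth term in part~\emph{(\ref{2017-06-03:07})} is sharper than the paper's own proof, which simply asserts that $t_n\rightarrow t^+$ gives $\mathbf{v}_{t_n\wedge \cdot}\rightarrow \mathbf{v}_{t\wedge \cdot}$ in $\Bones{H}$: as you observe, for evaluation points $s>t$ this requires $\mathbf{v}(t_n)\rightarrow \mathbf{v}(t)$, i.e.\ right-continuity of $\mathbf{v}$ at $t$ along $t_n\downarrow t$, which can fail for a general $\mathbf{v}\in \Bone{H}$ (take $\mathbf{v}=\mathbf{1}_{\{t\}}v$); so part~\emph{(\ref{2017-06-03:07})} should be read, as you read it, for directions right-continuous at $t$ --- in particular the step directions $\mathbf{1}_{[t,T]}v$ actually used in Section~\ref{2016-02-12:11} --- a restriction that the paper's proof tacitly shares but does not state.
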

\begin{proof}
\emph{(\ref{2017-06-03:08})} Let $t\in[0,T]$, $\mathbf{x}_n\rightarrow \mathbf{x}$ in $\Boninf{H}$,
$\mathbf{v}_n\rightarrow \mathbf{v}$ in $\Bones{H}$.
By
Proposition~\ref{2015-08-24:00}\emph{(\ref{2016-01-25:08})}, $\{\mathbf{v}_n\}_{n\in \mathbb{N}}$ is bounded in $\Boninf{H}$.
By Proposition~\ref{2016-02-11:37}\emph{(\ref{2016-02-11:25})}, 
$
\left\{  \partial _{\mathbb{B}^1}\Lambda^{t,\mathbf{x}_n}.\mathbf{v}_n   \right\} _{n\in \mathbb{N},t\in[0,T]}
$
is bounded in $\Boninf{H}$.
In particular, $\left\{  \partial _{\mathbb{B}^1}
\Lambda^{t,\mathbf{x}_n}.\mathbf{v}_n   \right\} _{n\in \mathbb{N}}$ is bounded in the Hilbert space $L^2([0,T],H)$, which is separable. 
Then we can find subsequences $\{\mathbf{x}_{n_k}\}_{k\in \mathbb{N}}$ and 
$\{\mathbf{v}_{n_k}\}_{k\in \mathbb{N}}$ 
such that
$$
\lim_{k\rightarrow \infty} \partial _{\mathbb{B}^1}
\Lambda^{t,\mathbf{x}_{n_k}}.\mathbf{v}_{n_k}   
= Z 
\mbox{ weakly in $L^2([0,T],H)$,}
$$
for some $Z\in L^2([0,T],H)$.
We recall that $ \partial _3\psi(t,\mathbf{ x'},\mathbf{y'})= \partial \Psi_t(\mathbf{y'})$
for all $\mathbf{x'}, \mathbf{y'}\in \Bone{H}$,
 where $\Psi_t$ 
was defined in the proof of 
Proposition \ref{2016-02-11:27}
by \eqref{eq:2016-02-11:38}.
By \eqref{eq:2016-02-11:17}, for $\mathbf{ y'},\mathbf{ v'}\in \Bone{H}$, we have
\begin{equation*}
  \begin{split}
       (\partial \Psi_t(\mathbf{y'}).\mathbf{ v'})(\xi)&=
\int_t^{t\vee \xi}
\big\langle \nabla_yb \left( r,
\int_{[0,T]}\mathbf{\tilde y'}(r-u)
\mu(du)
 \right),
\int_{[0,T]}\mathbf{\tilde v'}(r-u)
\mu(du) \big\rangle_H  dr\\
&=\int_t^{t\vee \xi}
 \left( \int_{[0,T]}
\big\langle \nabla_yb \left( r,
\int_{[0,T]}\mathbf{\tilde y'}(r-u)
\mu(du)
 \right),
\mathbf{\tilde v'}(r-s)
 \big\rangle_H \mu(ds)\right) dr\\
&=
 \int_{[0,T]}
 \left(\int_t^{t\vee \xi}
\big\langle \nabla_yb \left( r,
\int_{[0,T]}\mathbf{\tilde y'}(r-u)
\mu(du)
 \right),
\mathbf{\tilde v'}(r-s)
 \big\rangle_H dr\right)
\mu(ds).
\end{split}
\end{equation*}
By replacing
$\mathbf{x'}$ by $\mathbf{x}_{n_k}$,
 $\mathbf{y'}$ by $\Lambda^{t,\mathbf{x}_{n_k}}$, and $\mathbf{v'}$ by $ \partial _{\mathbb{B}^1}\Lambda^{t,\mathbf{x}_{n_k}}.\mathbf{v}_{n_k}$,
 we obtain (\footnote{If the argument $\mathbf{y}$ of the notation $\mathbf{\tilde y}$ is long, we write $(\mathbf{y})^\sim$.})
  \begin{align}
    &
\left(   \partial _3\psi(t,\mathbf{x}_{n_k},\Lambda^{t,\mathbf{x}_{n_k}}).
 \left(  \partial _{\mathbb{B}^1}
\Lambda^{t,\mathbf{x}_{n_k}}.\mathbf{v}_{n_k} \right) \right)(\xi)\label{2016-02-11:42}  \\
&\quad\quad=
\int_{[0,T]}
 \left( 
\int_t^{t\vee \xi}
\big\langle\nabla_y b
 \left( r,
\int_{[0,T]} (\Lambda^{t,\mathbf{x}_{n_k}})^\sim (r-u)\mu(du)
 \right),
(\partial _{\mathbb{B}^1}\Lambda^{t,\mathbf{x}_{n_k}}.\mathbf{v}_{n_k})^\sim(r-s)
\big\rangle_H dr 
 \right) \mu(ds).\notag
\end{align}
Due to the fact that 
$\{\partial _{\mathbb{B}^1}\Lambda^{t,\mathbf{x}_{n_k}}.\mathbf{v}_{n_k}\}_{k\in \mathbb{N}}$ is uniformly bounded in $\Boninf{H}$, passing to another subsequence if necessary, we can assume that
$(\partial _{\mathbb{B}^1}\Lambda^{t,\mathbf{x}_{n_k}}.\mathbf{v}_{n_k})(0)$ is weakly convergent in $H$ to  some $z_0\in H$.
Then
\begin{equation}
  \label{eq:2016-02-11:39}
\lim_{k\rightarrow \infty}(\partial _{\mathbb{B}^1}
\Lambda^{t,\mathbf{x}_{n_k}}.\mathbf{v}_{n_k})^\sim=
\mathbf{1}_{[-T,0)}(\cdot)z_0+\mathbf{1}_{[0,T]}Z\ \mbox{weakly  in }\ L^2([-T,T],H).
\end{equation}
By Proposition \ref{2016-02-11:37}\emph{(\ref{2016-02-11:23})}, we  have
$$
\Lambda^{t,\mathbf{x}_{n_k}}\rightarrow \Lambda^{t,\mathbf{x}}\mbox{ in }\Boninf{H},\qquad
$$
then, 
since $b(r,\cdot)\in C^1_b(H,H)$
(see Remark~\ref{2016-02-12:01}), 
\begin{equation*}
\lim_{k\rightarrow \infty}  \left|\nabla_y b
 \left( r,
\int_{[0,T]} (\Lambda^{t,\mathbf{x}_{n_k}})^\sim (r-u)\mu(du)
 \right)
-
\nabla_y b
 \left( r,
\int_{[0,T]} (\Lambda^{t,\mathbf{x}})^\sim (r-u)\mu(du)
 \right)\right|_H=0,
\end{equation*}
for all $r\in[0,T]$.
In particular, by Lebesgue's dominated convergence theorem,
\begin{equation}
  \label{eq:2016-02-11:40}
\nabla_y b
 \left( \cdot,
\int_{[0,T]} (\Lambda^{t,\mathbf{x}_{n_k}})^\sim (\cdot-u)\mu(du)
 \right)
\rightarrow
\nabla_y b
 \left( \cdot,
\int_{[0,T]} (\Lambda^{t,\mathbf{x}})^\sim (\cdot-u)\mu(du)
 \right)
\end{equation}
strongly in $L^2([0,T],H)$.
By
\eqref{eq:2016-02-11:39} and
\eqref{eq:2016-02-11:40}, we have, for all $s,\xi\in[0,T]$,
\begin{equation*}
    \begin{split}
      \lim_{k\rightarrow \infty}&
\int_t^{t\vee \xi}
\big\langle\nabla_y b
 \left( r,
\int_{[0,T]} (\Lambda^{t,\mathbf{x}_{n_k}})^\sim (r-u)\mu(du)
 \right),
(\partial _{\mathbb{B}^1}\Lambda(t,\mathbf{x}_{n_k}).\mathbf{v}_{n_k})^\sim(r-s)
\big\rangle_H dr =\\
&=
\int_t^{t\vee \xi}
\big\langle\nabla_y b
 \left( r,
\int_{[0,T]} (\Lambda^{t,\mathbf{x}})^\sim (r-u)\mu(du)
 \right),
\mathbf{1}_{[-T,0)}(r-s)z_0+\mathbf{1}_{[0,T]}(r-s)Z\big\rangle_H dr.
\end{split}
\end{equation*}
Since the latter limit holds for all $\xi$ and $s$,
by \eqref{2016-02-11:42} we have that the limit
$$
\lim_{k\rightarrow \infty}\left(   \partial _3\psi(t,\mathbf{x}_{n_k},\Lambda^{t,\mathbf{x}_{n_k}}).
 \left(  \partial _{\mathbb{B}^1}\Lambda^{t,\mathbf{x}_{n_k}}.\mathbf{v}_{n_k} \right) \right)(\xi)
$$
exists for all $\xi\in[0,T]$.
By Proposition~\ref{2015-08-24:00}\emph{(\ref{2016-01-25:08})},
since the sequence is uniformly bounded,
we can finally conclude that
$$
 \left\{    \partial _3\psi(t,\mathbf{x}_{n_k},\Lambda^{t,\mathbf{x}_{n_k}}).
 \left(  \partial _{\mathbb{B}^1}\Lambda^{t,\mathbf{x}_{n_k}}.\mathbf{v}_{n_k} \right)  \right\} _{k\in \mathbb{N}}
$$
converges in $\Bones{H}$.
Now we are almost done.
By
Proposition \ref{2016-02-11:37}\emph{(\ref{2016-02-11:33})},
we have
\begin{equation}\label{2016-02-11:43}
  \begin{split}
       \partial _{\mathbb{B}^1}\Lambda^{t,\mathbf{x}_{n_k}}.\mathbf{v}_{n_k}&=
 \partial _3\psi(t,\mathbf{x}_{n_k},\Lambda^{t,\mathbf{x}_{n_k}}).
 \left(  \partial _{\mathbb{B}^1}\Lambda^{t,\mathbf{x}_{n_k}}.\mathbf{v}_{n_k} \right) +
 \partial _2\psi(t,\mathbf{x}_{n_k},\Lambda^{t,\mathbf{x}_{n_k}}).\mathbf{v}_{n_k}\\
&=\partial _3\psi(t,\mathbf{x}_{n_k},\Lambda^{t,\mathbf{x}_{n_k}}).
 \left(  \partial _{\mathbb{B}^1}\Lambda^{t,\mathbf{x}_{n_k}}.\mathbf{v}_{n_k} \right) +(\mathbf{v}_{n_k})_{t\wedge \cdot}.
\end{split}
\end{equation}
By considering what proved above and the assumptions on $\{\mathbf{v}_n\}_{n\in \mathbb{N}}$, 
there exists $\gamma\in \Bone{H}$ such that
\begin{equation}\label{2017-06-10:03}
  \partial _3\psi(t,\mathbf{x}_{n_k},\Lambda^{t,\mathbf{x}_{n_k}}).
 \left(  \partial _{\mathbb{B}^1}\Lambda^{t,\mathbf{x}_{n_k}}.\mathbf{v}_{n_k} \right) +(\mathbf{v}_{n_k})_{t\wedge \cdot}\rightarrow \gamma\ \mbox{in}\ \Bones{H}.
\end{equation}
Then,
\eqref{2016-02-11:43},
\eqref{2017-06-10:03},
 and Proposition \ref{2016-02-11:27}\emph{(\ref{2016-02-11:44})}, we have
$$
\partial _3\psi(t,\mathbf{x}_{n_k},\Lambda^{t,\mathbf{x}_{n_k}}).
 \left(  \partial _{\mathbb{B}^1}\Lambda^{t,\mathbf{x}_{n_k}}.\mathbf{v}_{n_k} \right) \rightarrow 
\partial _3\psi(t,\mathbf{x},\Lambda^{t,\mathbf{x}}).
\gamma\mbox{ in }\Boninf{H},\mbox{ hence in }\Bones{H}.
$$
By taking the
limit in $\Bones{H}$ in \eqref{2016-02-11:43}, we have
$$
\gamma=\partial _3\psi(t,\mathbf{x},\Lambda^{t,\mathbf{x}}).
\gamma+
\mathbf{v}_{t\wedge \cdot},
$$
which entails $\gamma= \partial _{\mathbb{B}^1}\Lambda^{t,\mathbf{x}}.\mathbf{v}$, 
by Proposition \ref{2016-02-11:37}\emph{(\ref{2016-02-11:33})}.
This shows that
$$
\partial _{\mathbb{B}^1}\Lambda^{t,\mathbf{x}_{n_k}}.\mathbf{v}_{n_k}\rightarrow 
\partial _{\mathbb{B}^1}\Lambda^{t,\mathbf{x}}.\mathbf{v}
\ \mbox{ in }\ \Bones{H}.
$$
Since the original sequences $\{\mathbf{x}\}_{n\in \mathbb{N}},\{\mathbf{v}_n\}_{n\in \mathbb{N}}$ were arbitrary,
 \eqref{2016-02-11:36} is proved.

To prove
\eqref{2016-02-11:35},
we use
Proposition \ref{2016-02-11:37}\emph{(\ref{2016-02-11:33})}.
But now most of the work is done.
By
\eqref{2016-02-11:36}, 
we have
\begin{align*}
    \partial _{\mathbb{B}^1}\Lambda^{t,\mathbf{x}_{n}}.\mathbf{v}_{n}&\rightarrow 
\partial _{\mathbb{B}^1}\Lambda^{t,\mathbf{x}}.\mathbf{v}\mbox{ in }\Bones{H}\\
    \partial _{\mathbb{B}^1}\Lambda^{t,\mathbf{x}_{n}}.\mathbf{w}_{n}&\rightarrow 
\partial _{\mathbb{B}^1}\Lambda^{t,\mathbf{x}}.\mathbf{w}\mbox{ in }\Bones{H}.
  \end{align*}
By Proposition \ref{2016-02-11:27}\emph{(\ref{2016-02-11:44})}, we have
  \begin{multline*}
    \lim_{k\rightarrow \infty} \partial ^2_3
\psi(t,\mathbf{x}_{n},\Lambda^{t,\mathbf{x}_{n}}).
 \left( 
 (\partial _{\mathbb{B}^1}\Lambda^{t,\mathbf{x}_{n}}.\mathbf{v}_{n}),
 (\partial _{\mathbb{B}^1}\Lambda^{t,\mathbf{x}_{n}}.\mathbf{w}_{n})
 \right) =\\
= \partial ^2_3\psi(t,\mathbf{x},\Lambda^{t,\mathbf{x}}).
 \left( 
 (\partial _{\mathbb{B}^1}\Lambda^{t,\mathbf{x}}.\mathbf{v}),
 (\partial _{\mathbb{B}^1}\Lambda^{t,\mathbf{x}}.\mathbf{w})
 \right)
\end{multline*}
where the limit is taken in $\Boninf{H}$.
We can now conclude by using the strong continuity claimed in
Proposition \ref{2016-02-11:37}\emph{(\ref{2016-02-11:32})}   and
the formula for the second order derivative
provided by
Proposition \ref{2016-02-11:37}\emph{(\ref{2016-02-11:33})}.

\emph{(\ref{2017-06-03:07})}
Let $t_n\rightarrow t^+$ in $[0,T]$,
$\mathbf{x}\in \mathbb{W}$, $\mathbf{v}\in \Bone{H}$.
By Proposition~\ref{2016-02-11:37}\emph{(\ref{2016-02-11:33})}
and by taking into account  formula \eqref{eq:2017-06-04:01},
we can write
\begin{equation*}
  \begin{split}
     \partial _{\mathbb{B}^1}\Lambda^{t_n,\mathbf{x}}.\mathbf{v}&=
  (I-\partial _3\psi(t,\mathbf{x},\Lambda^{t,\mathbf{x}}))^{-1}
 \left( 
 \partial _2\psi(t_n,\mathbf{x},\Lambda^{t_n,\mathbf{x}}).\mathbf{v}
 \right) \\
&=\sum_{k\in \mathbb{N}}
(\partial _3\psi(t_n,\mathbf{x},\Lambda^{t_n,\mathbf{x}}))^k\mathbf{v}_{t_n\wedge \cdot}
=
\mathbf{v}_{t_n\wedge \cdot}+
\sum_{k\geq 1}
(\partial _3\psi(t_n,\mathbf{x},\Lambda^{t_n,\mathbf{x}}))^k\mathbf{v}_{t_n\wedge \cdot}.
  \end{split}
\end{equation*}
The fact that $t_n\rightarrow t$ from the right assures that $\mathbf{v}_{t_n\wedge\cdot}\rightarrow \mathbf{v}_{t\wedge \cdot}$ in $\Bones{H}$.
Moreover,
by
Proposition~\ref{2016-02-11:37}\emph{(\ref{2016-02-11:24})},
$\Lambda^{t_n,\mathbf{x}}\rightarrow \Lambda^{t,\mathbf{x}}$ in $\mathbb{W}$.
Then, by
Proposition~\ref{2016-02-11:27}\emph{(\ref{2016-02-11:13})},\emph{(\ref{2016-02-11:44})}, and Lebesgue's dominated convergence theorem for sums, we have
$$
\sum_{k\geq 1}
(\partial _3\psi(t_n,\mathbf{x},\Lambda^{t_n,\mathbf{x}}))^k\mathbf{v}_{t_n\wedge \cdot}\rightarrow 
\sum_{k\geq 1}
(\partial _3\psi(t,\mathbf{x},\Lambda^{t,\mathbf{x}}))^k\mathbf{v}_{t\wedge \cdot} \mbox{ in }\Boninf{H}.
$$
Then  
$     \partial _{\mathbb{B}^1}\Lambda^{t_n,\mathbf{x}}.\mathbf{v}\rightarrow 
     \partial _{\mathbb{B}^1}\Lambda^{t,\mathbf{x}}.\mathbf{v}$ in $\Bones{H}$ and \eqref{2017-06-09:02} is proved.

Regarding \eqref{2017-06-09:03}, the argument is similar, by using the expression for
$ \partial ^2_{\mathbb{B}^1}\Lambda^{t_n,\mathbf{x}}.(\mathbf{v},\mathbf{w})$
provided by
 Proposition~\ref{2016-02-11:37}\emph{(\ref{2016-02-11:33})},
the convergence
\eqref{2017-06-09:02} just proved,
and
\eqref{2017-06-09:04}
in Proposition~\ref{2016-02-11:27}\emph{(\ref{2016-02-11:44})}
\end{proof}


We defined $\psi$ for a given, fixed, $\omega\in\Omega$ (p.\ \pageref{2017-06-09:00}).
For every such $\psi$,  Propositions~\ref{2016-02-11:27},
\ref{2016-02-11:37},
\ref{2016-02-12:02} apply.
We can then define the map
\begin{equation}
  \label{eq:2016-02-12:03}
  \Omega\times [0,T]\times \Bone{H}\rightarrow \Bone{H},\ (\omega,t,\mathbf{x}) \mapsto X^{t,\mathbf{x}}(\omega)
\end{equation}
where $X^{t,\mathbf{x}}(\omega)$ is
 the function $\Lambda^{t,\mathbf{x}}$ provided by 
Proposition \ref{2016-02-11:37}, when $\psi$ is associated to $\omega$.
It should be clear that $X^{t,\mathbf{x}}$ is the unique
strong solution to SDE~\eqref{2016-02-11:00} in $\mathcal{L}^0_{\mathcal{P}_T}(\mathbb{W})$.

\medskip

Let $f\colon \Bone{H}\rightarrow \mathbb{R}$ be a function.
Hereafter, we assume that $f$ satisfies the following assumption.

\begin{assumption}\label{2016-02-12:06}
${}$
  \begin{enumerate}[(i)]
  \item  $f\in\mathcal{G}^2(\Boninf{H},\mathbb{R})$;
  \item  the differentials $ \partial f$ and $ \partial ^2f$ are bounded;
  \item\label{2016-02-12:04} $  \Boninf{H} \times \Bones{H}\rightarrow \mathbb{R},\ (\mathbf{x},\mathbf{v}) \mapsto  \partial f(\mathbf{x}).\mathbf{v}$ is sequentially continuous;
  \item\label{2016-02-12:05} $  \Boninf{H} \times \Bones{H}\times \Bones{H}\rightarrow \mathbb{R},\ (\mathbf{x},\mathbf{v},\mathbf{w}) \mapsto  \partial^2 f(\mathbf{x}).(\mathbf{v},\mathbf{w})$ is sequentially continuous.
  \end{enumerate}
\end{assumption}

The following theorem shows that the main results of 
Section~\ref{2017-05-11:07}
 can be applied in the present framework.

\begin{theorem}\label{2016-02-13:02}
  Let $X$ be the unique strong solution of  \eqref{2016-02-11:00}
and let
$$
\varphi\colon [0,T]\times \mathbb{W}\rightarrow \mathbb{R},\ (t,\mathbf{x}) \mapsto \mathbb{E} \left[ f(X^{t,\mathbf{x}}) \right].
$$
Then $\varphi$ verifies 
Assumption~\ref{2017-05-30:11}.
Moreover,
for all $t\in(0,T)$ and
all $\mathbf{x}\in \mathbb{W}$,
\begin{equation}
  \mathcal{D}_t^-\varphi(t,\mathbf{x}_{t\wedge \cdot})+
  \overline{ \partial _\mathbb{W}\varphi}(t,\mathbf{x}).
  (\mathbf{1}_{[t,T]}b(t,\mathbf{x}))
  +
  \frac{1}{2}\mathbf{T} \left[ \overline{\partial ^2_\mathbb{W}
      \varphi}(t,\mathbf{x}),B
  \right]=0
\end{equation}
and for all $t\in[0,T]$, $t'\in[t,T]$, $Y\in \mathcal{L}^p_{\mathcal{P}_T}(\mathbb{W})$, $p>2$,
\begin{equation}
  \label{eq:2017-06-09:01}
  \varphi(t',X^{t,Y})
=\varphi(t, Y)+\int_t^{t'}
\overline{ \partial _\mathbb{W}\varphi}(s,X^{t,Y}).(\mathbf{1}_{[s,T]}b(s,X^{t,Y}))
dW_s\qquad \mathbb{P}\mbox{-a.e..}
\end{equation}
\end{theorem}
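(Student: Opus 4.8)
The plan is to reduce everything to the general results of Section~\ref{2017-05-11:07}: once I show that $\varphi$ fulfils Assumption~\ref{2017-05-30:11}, the Kolmogorov equation follows from Theorem~\ref{2016-02-10:10} and the representation \eqref{eq:2017-06-09:01} from Corollary~\ref{2017-06-08:02} (with constant diffusion $\Phi\equiv B$). The heart of the argument is therefore to transfer the pathwise regularity of the flow $\Lambda^{t,\cdot}$, established in Propositions~\ref{2016-02-11:37} and~\ref{2016-02-12:02}, through the functional $f$ and through the expectation, so as to obtain Assumption~\ref{2017-05-30:11}(\ref{2017-05-31:06}),(\ref{2017-05-31:07}); the remaining part~(\ref{2017-05-31:10}) is then furnished for free by Theorem~\ref{2016-02-10:10}.

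First I would verify the spatial regularity~(\ref{2017-05-31:06}). Since $X^{t,\mathbf{x}}(\omega)=\Lambda^{t,\mathbf{x}}(\omega)$ and, by Proposition~\ref{2016-02-11:37}(\ref{2016-02-11:25}) together with Remark~\ref{2016-04-22:06}, each $\Lambda^{t,\cdot}$ belongs to $\mathcal{G}^2(\Boninf{H},\Boninf{H})$ with first and second G\^ateaux differentials bounded uniformly in $(\omega,t)$, the composition $\mathbf{x}\mapsto f(\Lambda^{t,\mathbf{x}}(\omega))$ is twice G\^ateaux differentiable with derivatives given by the chain rule, e.g.
\begin{equation*}
\partial_{\mathbb{B}^1}\big[f\circ\Lambda^{t,\cdot}(\omega)\big](\mathbf{x}).\mathbf{v}=\partial f(\Lambda^{t,\mathbf{x}}(\omega)).\big(\partial_{\mathbb{B}^1}\Lambda^{t,\mathbf{x}}(\omega).\mathbf{v}\big),
\end{equation*}
and an analogous second-order expression. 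The boundedness of $\partial f,\partial^2 f$ (Assumption~\ref{2016-02-12:06}), the Lipschitz bound of Proposition~\ref{2016-02-11:37}(\ref{2016-02-11:23}), and the uniform bounds on the differentials of $\Lambda$ allow a mean-value-plus-dominated-convergence argument to differentiate under $\mathbb{E}$, so that $\varphi(t,\cdot)\in\mathcal{G}^2(\Boninf{H},\mathbb{R})$ with $\partial_\mathbb{W}\varphi$ and $\partial^2_\mathbb{W}\varphi$ bounded uniformly in $t$; these are exactly the two boundedness displays in~(\ref{2017-05-31:06}).

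Next I would upgrade this to $\varphi(t,\cdot)\in\GatWB{\mathbb{R}}$ by producing the $\Sones{H}$-sequentially continuous extensions. Given $\mathbf{x}_n\to\mathbf{x}$ in $\Boninf{H}$ and $\mathbf{v}_n\to\mathbf{v}$, $\mathbf{w}_n\to\mathbf{w}$ in $\Bones{H}$, Proposition~\ref{2016-02-12:02}(\ref{2017-06-03:08}) gives $\partial_{\mathbb{B}^1}\Lambda^{t,\mathbf{x}_n}.\mathbf{v}_n\to\partial_{\mathbb{B}^1}\Lambda^{t,\mathbf{x}}.\mathbf{v}$ in $\Bones{H}$ and the corresponding second-order convergence in $\Boninf{H}$, while Proposition~\ref{2016-02-11:37}(\ref{2016-02-11:23}) gives $\Lambda^{t,\mathbf{x}_n}\to\Lambda^{t,\mathbf{x}}$ in $\Boninf{H}$. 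The sequential continuity of $\partial f,\partial^2 f$ with respect to $\Boninf{H}\times\Sones{H}$ assumed in Assumption~\ref{2016-02-12:06}(\ref{2016-02-12:04}),(\ref{2016-02-12:05}) then yields pathwise $\sigma^s$-convergence of the chain-rule integrands, and the uniform bounds let me pass the limit through $\mathbb{E}$ by dominated convergence, producing the sequentially continuous extensions $\overline{\partial_\mathbb{W}\varphi},\overline{\partial^2_\mathbb{W}\varphi}$. The time-regularity~(\ref{2017-05-31:07}) is obtained in the same spirit: for $t_n\to t^+$ I would feed the one-sided convergences of Proposition~\ref{2016-02-12:02}(\ref{2017-06-03:07}) into the same chain-rule expressions and again commute the limit with the expectation.

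With~(\ref{2017-05-31:06}) and~(\ref{2017-05-31:07}) in hand I would invoke Theorem~\ref{2016-02-10:10}, which supplies part~(\ref{2017-05-31:10}) of Assumption~\ref{2017-05-30:11} together with the Kolmogorov equation, and then Corollary~\ref{2017-06-08:02} to get the representation formula at once. I expect the main obstacle to be the sequential-continuity step: because $\sigma^s$ is strictly coarser than the norm topology (see~\eqref{eq:2016-01-25:03}), the differentials of $\Lambda$ converge only in $\Bones{H}$ and not in norm, so one must combine the \emph{norm} convergence $\Lambda^{t,\mathbf{x}_n}\to\Lambda^{t,\mathbf{x}}$ of the base point with the merely $\sigma^s$-convergence of the directions, relying precisely on the mixed $\Boninf{H}\times\Sones{H}$ sequential continuity of $\partial f,\partial^2 f$. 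Keeping the uniform-in-$\omega$ bounds of Remark~\ref{2016-04-22:06} in force throughout is what makes each passage through $\mathbb{E}$ legitimate.
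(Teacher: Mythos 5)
Your proposal is correct and follows essentially the same route as the paper: you verify Assumption~\ref{2017-05-30:11}(\ref{2017-05-31:06}),(\ref{2017-05-31:07}) by pushing the pathwise regularity of the flow (Propositions~\ref{2016-02-11:37} and~\ref{2016-02-12:02}, with the uniform-in-$\omega$ bounds of Remark~\ref{2016-04-22:06}) through the chain rule for $f\circ\Lambda^{t,\cdot}$ and then through $\mathbb{E}$ by dominated convergence, exactly as the paper does via \eqref{2016-02-12:07}--\eqref{2016-02-12:10}. The final reduction to Theorem~\ref{2016-02-10:10} and Corollary~\ref{2017-06-08:02}, including obtaining part~(\ref{2017-05-31:10}) for free, is also the paper's argument.
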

\begin{proof}
It is sufficient to show that
 $\varphi$ verifies
Assumption~\ref{2017-05-30:11}\emph{(\ref{2017-05-31:06}),(\ref{2017-05-31:07})},
since the remaining part of the theorem comes from
Theorem~\ref{2016-02-10:10}
and 
Corollary~\ref{2017-06-08:02}.

We begin by verifying 
Assumption~\ref{2017-05-30:11}\emph{(\ref{2017-05-31:06})}.
  By 
Propositin \ref{2016-02-11:37}\emph{(\ref{2016-02-11:25})},
for all $(\omega,t)\in\Omega \times [0,T]$, the map
$\mathbf{x} \mapsto X^{t,\mathbf{x}}(\omega)$ belongs to $\mathcal{G}^2(\Boninf{H},\Boninf{H})$ and has differentials $ \partial _{\mathbb{B}^1}X^{t,\cdot}(\omega)$
and
$ \partial^2 _{\mathbb{B}^1}
X^{t,\cdot}(\omega)$
bounded, with bound uniform in $\omega,t$
(recall Remark~\ref{2016-04-22:06}).
Then, since $f\in \mathcal{G}^2(\Boninf{H},\mathbb{R})$ and
$ \partial f$ and $ \partial ^2f$ are uniformly bounded, 
 the composition $\mathbf{x} \mapsto f(X^{t,\mathbf{x}}(\omega))$ belongs to $\mathcal{G}^2(\Boninf{H},\mathbb{R})$ and has differentials $ \partial _{\mathbb{B}^1}f(X^{t,\cdot}(\omega))$
and
$ \partial^2 _{\mathbb{B}^1}f(X^{t,\cdot}(\omega))$
bounded, with bound uniform in $\omega,t$.
We have
\begin{equation}\label{2016-02-12:07}
    \partial _{\mathbb{B}^1}f(X^{t,\mathbf{x}}(\omega)).\mathbf{v}= \partial f(X^{t,\mathbf{x}}(\omega)).( \partial _
{\mathbb{B}^1}X^{t,\mathbf{x}}(\omega).\mathbf{v})
  \end{equation}
  for all $t\in[0,T]$, $\omega\in\Omega$, $ \mathbf{x}, \mathbf{v}\in \Bone{H}$, and
  \begin{align}
     \partial^2 _{\mathbb{B}^1
}&f(X^{t,\mathbf{x}}(\omega))
.(\mathbf{v},\mathbf{w})
=\label{2016-02-12:08}\\
&=\partial^2 f(X^{t,\mathbf{x}}(\omega)).( 
(\partial _{\mathbb{B}^1}X^{t,\mathbf{x}}(\omega).\mathbf{v}).
(\partial _{\mathbb{B}^1}X^{t,\mathbf{x}}(\omega).\mathbf{w})
)
+
\partial f(X^{t,\mathbf{x}}(\omega)).( \partial^2 _{\mathbb{B}^1}X^{t,\mathbf{x}}(\omega).(\mathbf{v},\mathbf{w}))\notag
\end{align}
  for all $t\in[0,T]$, $\omega\in\Omega$, $\mathbf{x},\mathbf{v},\mathbf{w}\in \Bone{H}$.
Since 
$ \partial _{\mathbb{B}^1}f(X^{t,\mathbf{x}}(\omega))$
and
$ \partial^2 _{\mathbb{B}^1}f(X^{t,\mathbf{x}}(\omega))$
are bounded, with bound uniform in $\omega,t,\mathbf{x}$,
we can easily see that
\begin{align}
 \partial _{\mathbb{B}^1}\varphi(t,\mathbf{x}).\mathbf{v} &=
\mathbb{E} \left[  \partial _{\mathbb{B}^1}
f(X^{t,\mathbf{x}}) .\mathbf{v}\right]\label{2016-02-12:09} \\[2pt]
 \partial ^2_{\mathbb{B}^1}\varphi(t,\mathbf{x}).(\mathbf{v},\mathbf{w})&=
\mathbb{E} \left[  \partial^2 _{\mathbb{B}^1}f(X^{t,\mathbf{x}}) .(\mathbf{v},\mathbf{w})\right],\label{2016-02-12:10}
\end{align}
for all $t\in[0,T]$, $\mathbf{x},\mathbf{v},\mathbf{w}\in \Bone{H}$.
Finally, by
\eqref{2016-02-12:09},
\eqref{2016-02-12:10}, boundedness of $ \partial _{\mathbb{B}^1}f(X^{t,\cdot})(\omega)$
and
$ \partial ^2_{\mathbb{B}^1}f(X^{t,\cdot}(\omega))$,
strong continuity of 
$ \partial _{\mathbb{B}^1}f(X^{t,\cdot})(\omega)$
and $ \partial ^2_{\mathbb{B}^1}f(X^{t,\cdot}(\omega))$,
we  obtain that 
 $\varphi(t,\cdot)$
belongs to $ \Gatot{\Boninf{H}}{\mathbb{R}}{2}$ and has bounded first and second order differentials.
To conclude the verification of
Assumption~\ref{2017-05-30:11}\emph{(\ref{2017-05-31:06})}, it is sufficient to show that, for all $t\in[0,T]$,
the maps
\begin{gather*}
  \mathbb{W}\times \Bones{H}\rightarrow \mathbb{R},\ 
(\mathbf{x},\mathbf{v}) \mapsto 
 \partial _{\mathbb{B}^1}\varphi(t,\mathbf{x}).\mathbf{v}\\[2pt]
\mathbb{W}
\times \Bones{H}\times \Bones{H}\rightarrow \mathbb{R},\ 
(\mathbf{x},\mathbf{v},\mathbf{w}) \mapsto 
 \partial^2 _{\mathbb{B}^1}\varphi(t,\mathbf{x}).(\mathbf{v},\mathbf{w})
\end{gather*}
are sequentially continuous.
This comes immediately by combining
\eqref{2016-02-12:07},
\eqref{2016-02-12:08},
\eqref{2016-02-12:09},
\eqref{2016-02-12:10},
Proposition \ref{2016-02-11:37}\emph{(\ref{2016-02-11:23})},
Proposition \ref{2016-02-12:02}\emph{(\ref{2017-06-03:08})},
Assumption \ref{2016-02-12:06}\emph{(\ref{2016-02-12:04})},\emph{(\ref{2016-02-12:05})},
 the uniform boundedness of the differentials involved
(we recall again Remark~\ref{2016-04-22:06})
 and of the convergent sequences in $\Bones{H}$.

Similarly, we can see that
Assumption~\ref{2017-05-30:11}\emph{(\ref{2017-05-31:07})} is verified
by taking into account
\eqref{2016-02-12:07},
\eqref{2016-02-12:08},
\eqref{2016-02-12:09},
\eqref{2016-02-12:10},
Proposition \ref{2016-02-11:37}\emph{(\ref{2016-02-11:24})},
Proposition \ref{2016-02-12:02}\emph{(\ref{2017-06-03:07})},
Assumption \ref{2016-02-12:06}\emph{(\ref{2016-02-12:04})},\emph{(\ref{2016-02-12:05})},
 the uniform boundedness of the differentials involved
 and of the convergent sequences in $\Bones{H}$.
\end{proof}

\begin{remark}
  Let $g\colon [0,T]\times H\rightarrow H$ be a continuous function, with $g(t,\cdot)\in C^2_b(H,H)$ and with differentials $D_Hg, D^2_Hg$ uniformly continuous.
Let us introduce the function $\hat b^g$ defined by
$$
\hat b^g(t,\mathbf{y})\coloneqq b
 \left( t,\int_{[0,T]} \tilde g(t-s,\mathbf{\tilde y}(t-s))\mu(ds)
 \right) \qquad \forall (t,\mathbf{y})\in [0,T]\times \Bone{H},
$$
where $\tilde g(r,x)\coloneqq g(0,x)$ if $r<0$.
Consider the function
$$
G\colon \Bone{H}\rightarrow \Bone{H},\ \mathbf{y} \mapsto 
\{g(t,\mathbf{y}(t))\}_{t\in[0,T]}.
$$
Then $G$ is well-defined, $G$ belongs to $C^2_b(\Boninf{H},\Boninf{H})$,
and $\hat b^g(t,\mathbf{y})=\hat b(t,G(\mathbf{y}))$.
By using these observations and
 the explicit expressions of $DG,D^2G$ in terms of $D_Hg$, $D^2_Hg$,
it is not difficult to 
 show that the results
 proved in this section can be extended to the case in which the drift $\hat b$ 
in 
SDE~\eqref{2016-02-11:00}
is replaced by the more general drift $\hat b^g$.
\end{remark}


\addcontentsline{toc}{chapter}{References}
\bibliographystyle{plain}
\bibliography{rosestolato_2018-06-20_funct-ito.bbl}

\end{document}